\documentclass[11pt,a4paper]{article}

\usepackage[utf8]{inputenc}
\usepackage[T1]{fontenc}
\usepackage{amsmath,amssymb,amsthm}
\numberwithin{equation}{section}
\usepackage{mathtools}
\usepackage[margin=2.5cm]{geometry}
\usepackage{hyperref}
\hypersetup{colorlinks=true,linkcolor=blue,citecolor=blue,urlcolor=blue}
\usepackage{booktabs}
\usepackage{array}
\newcolumntype{L}[1]{>{\raggedright\arraybackslash}p{#1}}
\usepackage{graphicx}
\usepackage{xcolor}
\usepackage{enumitem}
\usepackage{microtype}
\usepackage{natbib}
\newtheorem{theorem}{Theorem}[section]
\newtheorem{proposition}[theorem]{Proposition}
\newtheorem{lemma}[theorem]{Lemma}

\newtheorem{definition}[theorem]{Definition}
\newtheorem{remark}[theorem]{Remark}

\newcommand{\R}{\mathbb{R}}
\newcommand{\C}{\mathbb{C}}
\newcommand{\eps}{\varepsilon}
\newcommand{\bfbeta}{\boldsymbol{\beta}}
\newcommand{\bfxi}{\boldsymbol{\xi}}
\newcommand{\norm}[1]{\left\|#1\right\|}
\newcommand{\abs}[1]{\left|#1\right|}
\newcommand{\Lstar}{L^{*}_{h}}
\newcommand{\Kh}{K_{h}}
\newcommand{\Sh}{S_{h}}
\newcommand{\Icd}{I^{\mathrm{cd}}_{h}}
\newcommand{\Pe}{\mathrm{Pe}}
\newcommand{\Peh}{\mathrm{Pe}_{h}}
\newcommand{\apq}{a_{pq}}
\newcommand{\bpq}{b_{pq}}
\newcommand{\lhpq}{\lambda_{h}(\theta_{p},\theta_{q})}
\newcommand{\rpq}{\rho_{pq}}

\begin{document}

\title{\textbf{Modal-Rectification-Based Directional Edge Diffusion\\
for Cartesian Convection--Diffusion Problems}}

\author{%
	Gossrin Jean-Marc Bomisso\thanks{%
		Universit\'e Nangui Abrogoua, UFR Sciences Fondamentales et Appliqu\'ees,
		02~BP~801 Abidjan~02, C\^ote d'Ivoire.
		Corresponding emails: \texttt{bogojm@yahoo.fr},
		\texttt{benzeus96@gmail.com}.}
	\and
	Ali Ouattara Kouma\footnotemark[1]}

\date{}

\maketitle

\begin{abstract}
Centered finite-difference discretizations of convection--diffusion equations
may oscillate when convection dominates at the mesh scale. For homogeneous
Dirichlet problems with constant coefficients on uniform Cartesian grids, we
derive ADSC (Adaptive Directional Sparse Correction), a local directional
edge-diffusion correction guided by modal rectification of the centered-stencil
Fourier symbol. The ideal modal reference damps modes independently, but its
exact nodal action is nonlocal; ADSC replaces it by a nearest-neighbor
positive semidefinite correction.

For a regularized operator with activation fixed by an auxiliary sequence, we
prove consistency, fixed-\(\eps\) energy stability, and conditional discrete
\(H^1\)-seminorm convergence. The implemented iteration instead uses
activation generated by the computed solution. For that fully coupled
nonlinear problem we prove existence and qualitative \(L^2\)
compactness/convergence only; uniqueness, convergence of activation updates,
and energy-norm rates remain open. Numerical tests show selective extrema
control, reduced modal-dominance indicators, and a low-cost few-shot variant.
Comparisons with upwinding, SUPG, and AFC-inspired strategies are diagnostic
rather than claims of uniform superiority.
\end{abstract}

\noindent\textbf{Keywords:} convection--diffusion; local Fourier analysis;
modal rectification; directional edge diffusion; numerical stabilization; finite differences.

\noindent\textbf{2020 MSC:} 65N06, 65N12, 65N15, 65F10, 35B35, 35J25.

\section{Introduction}


Convection--diffusion problems in convection-dominated regimes are
central to the numerical simulation of transport phenomena, notably
in aerodynamics, meteorology, and chemical engineering.
This work considers the two-dimensional model problem
\begin{equation}\label{eq:strong}
	\begin{cases}
		-\eps\,\Delta u + \bfbeta\cdot\nabla u = f, & \text{in }
		\Omega = (0,1)^{2},                                                     \\
		u = 0,                                      & \text{on }\partial\Omega,
	\end{cases}
\end{equation}
where $\bfbeta=(\beta_1,\beta_2)\in\R^2$ is a constant convection vector and
$|\bfbeta|=\sqrt{\beta_1^2+\beta_2^2}$. We focus on regimes in which the
mesh P\'eclet number $\Peh=|\bfbeta|h/(2\eps)$ can be larger than one; this
number measures the ratio between convective and diffusive effects at the
mesh scale.
When $\Peh>1$, standard centered Galerkin discretizations can develop
spurious oscillations, especially near internal layers and sharp convective
fronts~\citep{Roos2008}. What remains less explicit in many local
stabilizations is the modal geometry of the dissipative footprint in two
space dimensions: which frequency regions are damped, in which directions,
and by what local edge mechanism.

\subsection*{Related work}

Classical stabilizations for~\eqref{eq:strong} include
SUPG~\citep{Brooks1982}, artificial
viscosity~\citep{Johnson1981}, upwind
schemes~\citep{Ilin1969}, discontinuous Galerkin
methods with upwind numerical fluxes~\citep{Cockburn2001}, and SOLD-type
methods~\citep{Mizukami1985,John2007}. Related frameworks include
Galerkin/least-squares formulations~\citep{Hughes1989},
continuous interior penalty (CIP)
methods~\citep{Burman2004,Burman2007}, local projection
stabilization (LPS)~\citep{Braack2006}, and algebraic flux
correction (AFC)~\citep{Kuzmin2005}.
Further algorithmic developments include variants of algebraic flux
correction with discrete maximum-principle analysis~\citep{Barrenechea2016},
entropy-viscosity stabilization~\citep{Guermond2011}, and stabilized
finite element formulations
for advective--diffusive models~\citep{Franca1992,ErnGuermond2004},
robust analysis of singularly perturbed convection--diffusion
problems~\citep{Stynes2005}, a posteriori error estimation and adaptive
refinement~\citep{Verfurth1996}, and local Fourier analysis for multigrid
discretizations~\citep{MacLachlan2011}.
Recent surveys emphasize
that finite-element stabilization for convection-dominated transport
remains active, especially when monotonicity, anisotropy, and
unstructured meshes are combined~\citep{Tobiska1996,John2018,Barrenechea2024}.
These surveys do not construct a nearest-neighbor edge correction from a
constructed rectified modal reference operator; this modal-design viewpoint
is the specific angle pursued here.

The Fourier-symbol approach used here is standard in local Fourier
analysis (LFA)~\citep{Trottenberg2001,Bolten2018} and in von~Neumann
stability analysis~\citep{LeVeque2007,Strikwerda2004}. The present paper
uses the local Fourier symbol for a specific purpose: to construct a two-dimensional
convection-dominance map, a directional modal rectification principle,
a nonlocal rectified reference operator, and a nearest-neighbor
edge-diffusion surrogate designed to reproduce selected dissipative
effects of that reference operator.

Unlike spectral viscosity methods~\citep{Tadmor1989} or global modal
filters used in spectral discretizations~\citep{Boyd2001}, the correction
considered here is local in physical space and uses a nearest-neighbor
sparse stencil. Unlike SOLD or AFC techniques, its local detector is
motivated by a directional modal rectification mechanism rather than being
primarily formulated as a flux-limiting construction. The purpose of the
paper is not to replace established monotonicity-preserving or
streamline-diffusion methods, but to derive a local directional edge
correction from a modal rectification viewpoint and to quantify what this
design achieves in the Cartesian constant-coefficient setting.

\subsection*{Main contributions}

The paper makes six contributions.
\begin{enumerate}[label=(\roman*)]
	\item We build a two-dimensional modal map from the local Fourier symbol of
	      the centered stencil, including the convection-dominance indicator
	      \(\rho_{pq}\), the dominant set \(\Icd\), and the anisotropic modal
	      footprint (Section~\ref{sec:symbol}).

	\item We introduce a constructed rectified modal reference operator and
	      prove that exact modal rectification is nonlocal in the nodal basis
	      (Proposition~\ref{prop:nonlocality}). The Frobenius discussion is
	      retained only as a short structural diagnostic of information loss
	      under forced sparsification, not as a design rule for ADSC
	      (Section~\ref{sec:rectification}).

	\item We interpret classical stabilizations through their structured-grid
	      modal footprints and position ADSC relative to upwinding, SUPG,
	      CIP, LPS, and AFC-type mechanisms (Section~\ref{sec:stabilizations}).

	\item We formulate ADSC as one specified nearest-neighbor,
	      \(\bfbeta\)-directional, symmetric positive semidefinite edge
	      correction (Section~\ref{sec:adsc}).

	\item We distinguish the sharp active-set detector from a regularized
	      detector. For the fixed-activation operator selected from an
	      auxiliary activation sequence, the regularized detector yields the
	      coefficient-variation estimate used in the consistency and
	      energy-stability analysis. For the fully coupled activation used in
	      computations, only existence and qualitative convergence are proved;
	      uniqueness, activation-iteration convergence, and energy-norm rates
	      remain open.

	\item We provide numerical evidence for this modal-design construction,
	      including the main two-dimensional benchmark, parameter and
	      direction sensitivity, few-shot activation, manufactured active and
	      inactive regimes, and NIST-inspired exponential boundary-layer tests (standard sharp exponential boundary-layer
	      benchmarks) on uniform and Shishkin meshes (Section~\ref{sec:numerics}).
\end{enumerate}

\subsection*{Scope}

The analysis is restricted to uniform Cartesian meshes and constant
coefficients. This is the setting in which the interior stencil has an
explicit two-frequency Fourier symbol and in which the nonlocality of the
rectified modal reference operator can be stated cleanly. Nonuniform meshes,
variable coefficients, and finite-element discretizations are not routine
extensions of the present proof, because the tensor-product modal basis and
translation-invariant symbol are then lost.
Unless explicitly stated as a numerical robustness test, all analytical
claims below concern this uniform Cartesian, constant-coefficient,
homogeneous Dirichlet setting.

In particular, even the structured criss-cross \(P_1\) finite-element case
would require a mass-matrix-based modal symbol and an edge-directional
balance involving the triangular edge directions. The present paper should
therefore be read as a modal mechanism study and local edge-diffusion design
on Cartesian grids, with finite-element and variable-coefficient versions
left as separate developments.

\section{Discrete modal symbol and convection-dominance indicator}
\label{sec:symbol}

\subsection{Weak formulation}

The weak formulation of~\eqref{eq:strong} reads: find $u\in V=H^1_0(\Omega)$
such that $a(u,v)=\ell(v)$ for all $v\in V$, where
\[
	a(w,v)=\eps\int_\Omega\nabla w\cdot\nabla v\,dx
	+\int_\Omega(\bfbeta\cdot\nabla w)\,v\,dx,
	\qquad
	\ell(v)=\int_\Omega f\,v\,dx.
\]
This weak form is recalled only to make explicit the skew-symmetry that is
also mirrored by the centered finite-difference convection matrix used
below. Since $\nabla\cdot\bfbeta=0$ and $u=v=0$ on $\partial\Omega$, the
convective part is skew-symmetric. Indeed, for $u,v\in H^1_0(\Omega)$,
\[
	\int_\Omega (\bfbeta\cdot\nabla u)v\,dx
	= -\int_\Omega u(\bfbeta\cdot\nabla v)\,dx.
\]

\subsection{Interior stencil}

On a uniform Cartesian mesh of step $h=1/N_e$, with $N=N_e-1$ interior
nodes per direction, the centered stencil is
\begin{equation}\label{eq:stencil}
	(\Kh U)_{ij} =
	\frac{\eps}{h^2}(4U_{ij}-U_{i+1,j}-U_{i-1,j}-U_{i,j+1}-U_{i,j-1})
	+\frac{\beta_1}{2h}(U_{i+1,j}-U_{i-1,j})
	+\frac{\beta_2}{2h}(U_{i,j+1}-U_{i,j-1}).
\end{equation}
This stencil describes the local action of the centered discrete operator
for all interior nodes \((i,j)\in\{1,\ldots,N\}^{2}\). It is not intended
as a global diagonalization of the full finite matrix, and boundary effects
are not captured by the symbol analysis below.
In the finite Dirichlet computations, \(\Kh\) denotes the
\(N^2\times N^2\) interior matrix obtained from this stencil after setting
all boundary values to zero.

\subsection{Interior LFA symbol}

\begin{remark}[Relation to LFA]\label{rem:lfa}
	The symbol below is the local Fourier analysis symbol of the interior
	stencil~\eqref{eq:stencil}, not the spectrum of the full finite Dirichlet
	matrix. It is used to define a modal convection-dominance indicator, a
	rectified reference operator, and a direction-aware sparse correction.
	Boundary layers and Dirichlet effects are assessed in the numerical
	Dirichlet solves; the modal quantities reported later are interior-stencil
	diagnostics.
\end{remark}

\begin{definition}[Discrete modal symbol]\label{def:symbol}
	Throughout Section~\ref{sec:symbol}, let \(K_h^{\mathrm{int}}\) denote the translation-invariant interior
	stencil operator used in local Fourier analysis, distinct from the finite
	Dirichlet matrix \(\Kh\). For
	$\theta=(\theta_1,\theta_2)\in(0,\pi)^2$, the discrete modal symbol of
	the stencil~\eqref{eq:stencil} is the complex number
	$\lambda_h(\theta_1,\theta_2)\in\C$ defined by
	$K_h^{\mathrm{int}}\psi_\theta =
	\lambda_h(\theta_1,\theta_2)\psi_\theta$,
	where $\psi_\theta(r,s)=\exp\!\left(\mathrm{i}(r\theta_1+s\theta_2)\right)$.
\end{definition}

\begin{proposition}[Expression of the discrete modal symbol]
	\label{prop:symbol}
	The discrete modal symbol of~\eqref{eq:stencil} is
	\begin{equation}\label{eq:symbol}
		\lambda_h(\theta_1,\theta_2)
		= a_h(\theta_1,\theta_2)+\mathrm{i}\,b_h(\theta_1,\theta_2),
	\end{equation}
	where
	\begin{equation}\label{eq:ab}
		a_h(\theta_1,\theta_2)=\frac{2\eps}{h^2}(2-\cos\theta_1-\cos\theta_2),
		\qquad
		b_h(\theta_1,\theta_2)=\frac{1}{h}(\beta_1\sin\theta_1+\beta_2\sin\theta_2).
	\end{equation}
\end{proposition}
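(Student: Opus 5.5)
We substitute the plane wave \(\psi_\theta(r,s)=\exp(\mathrm{i}(r\theta_1+s\theta_2))\) directly into the interior stencil~\eqref{eq:stencil}, taking \(U_{ij}=\psi_\theta(i,j)\) at every node of the infinite lattice, so that \(K_h^{\mathrm{int}}\) acts without boundary truncation as in Definition~\ref{def:symbol}. The key observation is that each shifted value factors out the common term: \(U_{i\pm1,j}=e^{\pm\mathrm{i}\theta_1}\psi_\theta(i,j)\) and \(U_{i,j\pm1}=e^{\pm\mathrm{i}\theta_2}\psi_\theta(i,j)\). Hence \((K_h^{\mathrm{int}}\psi_\theta)(i,j)=\lambda\,\psi_\theta(i,j)\), where \(\lambda\) is a scalar independent of \((i,j)\). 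This shows simultaneously that \(\psi_\theta\) is an eigenfunction and identifies the symbol.

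Next, we treat the diffusive and convective parts separately. For the diffusive part, the factor multiplying \(\psi_\theta\) is
\[
\frac{\eps}{h^2}\bigl(4-e^{\mathrm{i}\theta_1}-e^{-\mathrm{i}\theta_1}-e^{\mathrm{i}\theta_2}-e^{-\mathrm{i}\theta_2}\bigr).
\]
Using \(e^{\mathrm{i}x}+e^{-\mathrm{i}x}=2\cos x\), this equals \(\frac{2\eps}{h^2}(2-\cos\theta_1-\cos\theta_2)\), which is real and gives \(a_h\). For the convective part, we obtain
\[
\frac{\beta_1}{2h}\bigl(e^{\mathrm{i}\theta_1}-e^{-\mathrm{i}\theta_1}\bigr)+\frac{\beta_2}{2h}\bigl(e^{\mathrm{i}\theta_2}-e^{-\mathrm{i}\theta_2}\bigr).
\]
Using \(e^{\mathrm{i}x}-e^{-\mathrm{i}x}=2\mathrm{i}\sin x\), this equals \(\frac{\mathrm{i}}{h}(\beta_1\sin\theta_1+\beta_2\sin\theta_2)\), which is purely imaginary and gives \(\mathrm{i}\,b_h\). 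Adding the two parts yields~\eqref{eq:symbol}--\eqref{eq:ab}.

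There is no genuine obstacle here. The only point requiring care is interpretive. The symbol is defined through the translation-invariant interior operator \(K_h^{\mathrm{int}}\) acting on unbounded lattice functions, not through the finite Dirichlet matrix \(\Kh\), for which \(\psi_\theta\) is not an eigenvector; this is consistent with Remark~\ref{rem:lfa}. We will also note that the real part comes from the symmetric (diffusive) part of the stencil and the imaginary part from the skew-symmetric centered convection. This mirrors the skew-symmetry of the continuous convective form recalled above and confirms that \(a_h\ge 0\) while \(b_h\) is odd in \(\theta\).
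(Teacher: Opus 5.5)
Your proof is correct and is the same argument as the paper's. You substitute the plane wave, factor out the shift multipliers $e^{\pm\mathrm{i}\theta_1}$ and $e^{\pm\mathrm{i}\theta_2}$, and then apply $e^{\mathrm{i}x}+e^{-\mathrm{i}x}=2\cos x$ to the diffusive part and $e^{\mathrm{i}x}-e^{-\mathrm{i}x}=2\mathrm{i}\sin x$ to the convective part, which is exactly how the paper derives the symbol.
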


\begin{proof}
	For \(\psi_\theta(r,s)=\exp(\mathrm{i}(r\theta_1+s\theta_2))\), the shifts
	multiply the mode by \(e^{\pm\mathrm{i}\theta_1}\) and
	\(e^{\pm\mathrm{i}\theta_2}\). Substituting these factors into
	\eqref{eq:stencil} and using
	\(e^{\mathrm{i}\theta}+e^{-\mathrm{i}\theta}=2\cos\theta\) and
	\(e^{\mathrm{i}\theta}-e^{-\mathrm{i}\theta}=2\mathrm{i}\sin\theta\)
	gives exactly \eqref{eq:symbol}--\eqref{eq:ab}.
\end{proof}

The preceding LFA remark, symbol definition, and symbol formula delimit the
local-symbol framework used in the rest of this section.

\subsection{Modified equation}

Before introducing the modal dominance indicator, we record the
modified equation associated with the same real/imaginary splitting of the
symbol.

Modified equation analysis is used only as a formal diagnostic of the
dispersive and dissipative terms introduced by the centered stencil, in
the classical sense of~\citet{Warming1974}.

\begin{lemma}[Formal modified equation]\label{lem:modified}
	For \(u\in C^6(\overline\Omega)\), applying stencil~\eqref{eq:stencil} to the
	exact solution yields
	\begin{equation}\label{eq:modified}
		-\eps\Delta u+\bfbeta\cdot\nabla u
		+\frac{h^2}{6}(\beta_1\partial_{xxx}u+\beta_2\partial_{yyy}u)
		-\frac{\eps h^2}{12}(\partial_{xxxx}u+\partial_{yyyy}u)
		= f+O(h^4).
	\end{equation}
\end{lemma}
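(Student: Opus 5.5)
The plan is to Taylor-expand each one-dimensional difference quotient in \eqref{eq:stencil} separately about the node \((x_i,y_j)\), collect the terms up to order \(h^2\), and bound the remainders by sixth-order (resp.\ fifth-order) derivatives of \(u\). The stencil splits additively into an \(x\)-part and a \(y\)-part, both for the diffusion and for the convection. It therefore suffices to treat the one-dimensional second difference \(\delta_x^2 u=(u(x+h,y)-2u(x,y)+u(x-h,y))/h^2\) and the centered first difference \(\delta_x u=(u(x+h,y)-u(x-h,y))/(2h)\). The \(y\)-direction follows by symmetry.

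First, I would write \(u(x\pm h,y)=\sum_{k=0}^{5}(\pm h)^k\partial_x^k u/k!+R_\pm\), with Lagrange remainders \(R_\pm=(\pm h)^6\partial_x^6u(\xi_\pm,y)/720\). Adding the two expansions cancels the odd terms and gives
\[
\delta_x^2u=\partial_{xx}u+\tfrac{h^2}{12}\partial_{xxxx}u+O(h^4),
\]
with the \(O(h^4)\) constant controlled by \(\|\partial_x^6u\|_\infty/360\). Subtracting them, and truncating instead at order four with a fifth-order remainder, gives
\[
\delta_xu=\partial_xu+\tfrac{h^2}{6}\partial_{xxx}u+O(h^4),
\]
with constant \(\|\partial_x^5u\|_\infty/120\). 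Both bounds are finite because \(u\in C^6(\overline\Omega)\).

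Second, I would assemble the stencil. Note that \((\Kh u)_{ij}=-\eps(\delta_x^2+\delta_y^2)u+\beta_1\delta_xu+\beta_2\delta_yu\), since \(\frac{\eps}{h^2}(4U_{ij}-\dots)\) is exactly \(-\eps\) times the five-point Laplacian. Substituting the two expansions yields \(-\eps\Delta u+\bfbeta\cdot\nabla u\), plus \(\frac{h^2}{6}(\beta_1\partial_{xxx}u+\beta_2\partial_{yyy}u)\), minus \(\frac{\eps h^2}{12}(\partial_{xxxx}u+\partial_{yyyy}u)\), plus a remainder bounded by \(C(\eps,|\bfbeta|)\,\|u\|_{C^6}h^4\). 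Using \(-\eps\Delta u+\bfbeta\cdot\nabla u=f\) for the exact solution, the stencil applied to \(u\) equals \(f\) plus the two \(h^2\) correction terms plus \(O(h^4)\). Moving the correction terms to the left then gives exactly \eqref{eq:modified}.

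There is no real obstacle; the only points requiring care are the following.
\begin{enumerate}[label=(\alph*)]
\item Signs: the \(-\eps\) in front of the Laplacian turns the \(+\frac{h^2}{12}\) of the second difference into \(-\frac{\eps h^2}{12}\).
\item Regularity near the boundary: the points \(x\pm h\) adjacent to \(\partial\Omega\) still lie in \(\overline\Omega\), so the Taylor expansions are valid at every interior node.
\item Uniformity: the \(O(h^4)\) constant is uniform in the node but depends on \(\eps\) and \(\bfbeta\). This is consistent with the statement being a formal, fixed-parameter diagnostic.
\end{enumerate}
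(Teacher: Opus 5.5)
Your two Taylor expansions are exactly the paper's argument, including the correct remainder constants $\|\partial_x^6u\|_\infty/360$ and $\|\partial_x^5u\|_\infty/120$. So is the assembly $(\Kh u)_{ij}=-\eps(\delta_x^2+\delta_y^2)u+\beta_1\delta_xu+\beta_2\delta_yu$. Together they correctly give, for every $v\in C^6(\overline\Omega)$,
\[
(\Kh v)_{ij}=\bigl[\mathcal{L}v+T_h(v)\bigr](x_i,y_j)+O(h^4),
\]
where $\mathcal{L}v=-\eps\Delta v+\bfbeta\cdot\nabla v$ and $T_h(v)=\frac{h^2}{6}(\beta_1\partial_{xxx}v+\beta_2\partial_{yyy}v)-\frac{\eps h^2}{12}(\partial_{xxxx}v+\partial_{yyyy}v)$.

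The gap is in your last step. From $\Kh u=f+T_h(u)+O(h^4)$, moving $T_h(u)$ to the left gives $\Kh u-T_h(u)=f+O(h^4)$. That is not \eqref{eq:modified}, whose left-hand side contains the continuous operator $\mathcal{L}u$, not $\Kh u$. For the exact solution, $\mathcal{L}u=f$ holds identically, so \eqref{eq:modified} would reduce to $T_h(u)=O(h^4)$. This is false in general. For the manufactured solution $u=\sin(\pi x)\sin(\pi y)$ of Section~\ref{subsec:active_exact}, the third-derivative terms vanish at the center and $T_h(u)(\tfrac12,\tfrac12)=-\eps\pi^4h^2/6$. Substituting the continuous PDE therefore yields only the truncation error $\Kh u-f=T_h(u)+O(h^4)$; no rearrangement turns it into \eqref{eq:modified}.

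The paper's proof makes the same substitution (``use of $-\eps\Delta u+\bfbeta\cdot\nabla u=f$ gives \eqref{eq:modified}''). So you inherited the slip from the source rather than introducing it, but it is the one delicate point missing from your list (a)--(c).

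You can fix it in either of two ways:
\begin{enumerate}[label=(\roman*)]
\item Use the classical modified-equation reading. Let $v$ be a smooth function whose grid values satisfy the \emph{discrete} scheme $\Kh v=f$. Inserting this, instead of $\mathcal{L}u=f$, into your expansion gives $\mathcal{L}v+T_h(v)=f+O(h^4)$, which is \eqref{eq:modified} with $v$ in place of $u$.
\item State the lemma as the operator identity displayed above, without the ``$=f$''.
\end{enumerate}
Either way, the conclusion concerns $\Kh$ as an operator, or the smooth function represented by the discrete solution, not the exact solution of \eqref{eq:strong}.
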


\begin{proof}
	The centered first and second differences satisfy
	\[
	D^0_xu=\partial_xu+\frac{h^2}{6}\partial_{xxx}u+O(h^4),\qquad
	-D^2_xu=-\partial_{xx}u-\frac{h^2}{12}\partial_{xxxx}u+O(h^4),
	\]
	and analogously in the \(y\)-direction. Substitution into
	\eqref{eq:stencil} and use of
	\(-\eps\Delta u+\bfbeta\cdot\nabla u=f\) gives \eqref{eq:modified}.
	No mixed derivative appears in this expansion because the stencil is the
	sum of one-dimensional centered differences in the coordinate directions.
\end{proof}

The odd third-order terms $(\beta_1\partial_{xxx}u+\beta_2\partial_{yyy}u)$
	are dispersive and directional: they modify the phase of discrete modes
	without adding symmetric dissipation. This is the physical-space
	counterpart of the imaginary part $b_h$ of the modal symbol. It explains
	why centered convection primarily introduces phase errors rather than
	additional dissipation, consistent with the purely imaginary convective
	contribution \(b_h\) in~\eqref{eq:symbol}. Lemma~\ref{lem:modified} is
	therefore used only as a diagnostic motivation for selective symmetric
	damping.


\subsection{Modal convection-dominance indicator}

Set $\theta_p=p\pi/(N+1)$, $\theta_q=q\pi/(N+1)$ for $p,q=1,\ldots,N$.

\begin{definition}[Modal convection-dominance indicator]
	\label{def:indicator}
	The modal convection-dominance indicator for the pair $(p,q)$ is
	\begin{equation}\label{eq:indicator}
		\rpq
		= \frac{\abs{b_h(\theta_p,\theta_q)}}{a_h(\theta_p,\theta_q)}
		= \frac{\abs{\operatorname{Im}(\lhpq)}}{\operatorname{Re}(\lhpq)}.
	\end{equation}
	The pair $(p,q)$ is \emph{convectively dominant} when $\rpq>1$,
	and the convectively dominant modal set is
	$\Icd=\{(p,q)\in\{1,\ldots,N\}^2:\rpq>1\}$.
	For these interior modal pairs, \(a_h(\theta_p,\theta_q)>0\), because
	\(\theta_p,\theta_q\in(0,\pi)\); hence the denominator in
	\eqref{eq:indicator} is never zero.
	The threshold \(\rpq>1\) means that the skew convective contribution of
	the local symbol is larger than the symmetric diffusive contribution for
	that mode.
\end{definition}

Definition~\ref{def:indicator} supplies the modal diagnostic used in the parameter rule and in the numerical tables.

\paragraph{Notation used for modal averages.}
Throughout the paper, $(p,q)$ always denotes an element of
	$\{1,\ldots,N\}^2$, where \(N=N_e-1\) is the number of interior nodes per
	coordinate direction. Averages over \(\Icd\), for example
	\(\bar\rho_{\mathrm{Gal}}=|\Icd|^{-1}\sum_{(p,q)\in\Icd}\rho_{pq}\), are therefore
	averages over modal pairs, not over physical nodes. The vector
	\(\bfxi_{pq}=(\theta_p/h,\theta_q/h)^T\) denotes the associated wave
	vector, while \(\phi_{pq}\) denotes the tensor-product sine mode.

\begin{remark}[Interpretation: modal P\'eclet number]
	\label{rem:peclet_modal}
	For the one-dimensional slice $\theta_2=0$ with $\bfbeta=(\beta,0)$:
	\[
		\rho_k
		=\frac{|\beta|\sin\theta_k/h}{2\eps(1-\cos\theta_k)/h^2}
		=\Peh\cdot\frac{\sin\theta_k}{1-\cos\theta_k}.
	\]
	As $\theta_k\to0^+$,
	\[
		\rho_k
		=\Peh\frac{\sin\theta_k}{1-\cos\theta_k}
		=\frac{2\Peh}{\theta_k}+O(\Peh\,\theta_k),
	\]
	so that $\rho_k$ becomes large for low-frequency modes when the
	symmetric diffusive contribution is small compared with the skew
	convective part. For $\theta_k=\pi/2$, one has $\rho_k=\Peh$.
	Thus $\rpq$ generalizes the mesh P\'eclet number to individual modal
	pairs, providing a mode-resolved convection-dominance measure. The
	threshold $\rpq>1$ identifies pairs for which the oscillatory convective
	contribution exceeds the symmetric diffusive contribution of the symbol
	-- a diagnostic indicator of phase-dominant modal behavior, meaning that
	the imaginary convective part of the symbol exceeds the real diffusive part,
	not a spectral instability criterion in the eigenvalue sense. The singular
	low-frequency behavior is not used as a proof that all long waves must
	be artificially damped. It records the fact that the symmetric diffusive
	symbol vanishes quadratically while the skew convective symbol vanishes
	linearly. In ADSC, this modal information is filtered through a local
	directional non-monotonicity detector and a bounded edge-diffusion
	scale; \(\rho_{pq}\) is therefore a design diagnostic, not an error estimator.
\end{remark}

Remark~\ref{rem:peclet_modal} explains why the indicator may become large even for low frequencies, so the subsequent ADSC design does not use \(\rho_{pq}\) alone but couples it with a spatial directional detector.

\subsection{Two-dimensional modal footprint}
\label{sec:footprint}

The \emph{modal footprint} is the image of the frequency domain under
the symbol map:
\[
	\mathcal{F}_h
	= \{\lambda_h(\theta_1,\theta_2):(\theta_1,\theta_2)\in(0,\pi)^2\}
	\subset\C.
\]
The real part satisfies $0<a_h<8\eps/h^2$ on $(0,\pi)^2$, and
$\abs{b_h}\le(|\beta_1|+|\beta_2|)/h$.
Ellipse-shaped curves appear only on one-dimensional slices; the full
two-dimensional footprint is a two-dimensional region in the complex plane whose shape
depends on the direction of $\bfbeta$. This is the geometric object that
distinguishes the two-dimensional analysis from its one-dimensional
reduction. Proposition~\ref{prop:footprint_interior} formalizes this two-dimensional feature.

\begin{proposition}[Nonempty interior of the modal footprint]
\label{prop:footprint_interior}
If \(\bfbeta\ne0\), then the footprint \(\mathcal F_h\) has nonempty interior
in \(\mathbb C\).
\end{proposition}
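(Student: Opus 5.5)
The plan is to regard the symbol as a smooth map \(F:(0,\pi)^2\to\R^2\), \(F(\theta)=(a_h(\theta),b_h(\theta))\), and apply the inverse function theorem. If the Jacobian determinant of \(F\) is nonzero at a single point \(\theta^*\in(0,\pi)^2\), then \(F\) maps a small open neighbourhood of \(\theta^*\) (still inside the open square) onto an open neighbourhood of \(F(\theta^*)\). That neighbourhood is contained in \(\mathcal F_h\), so the interior of \(\mathcal F_h\) is nonempty.

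From \eqref{eq:ab},
\[
\partial_{\theta_1}a_h=\frac{2\eps}{h^2}\sin\theta_1,\quad
\partial_{\theta_2}a_h=\frac{2\eps}{h^2}\sin\theta_2,\quad
\partial_{\theta_1}b_h=\frac{\beta_1}{h}\cos\theta_1,\quad
\partial_{\theta_2}b_h=\frac{\beta_2}{h}\cos\theta_2,
\]
so
\[
\det DF(\theta)=\frac{2\eps}{h^3}\bigl(\beta_2\sin\theta_1\cos\theta_2-\beta_1\cos\theta_1\sin\theta_2\bigr).
\]
Since \(\eps>0\), it remains to find \(\theta\in(0,\pi)^2\) at which \(g(\theta)=\beta_2\sin\theta_1\cos\theta_2-\beta_1\cos\theta_1\sin\theta_2\) does not vanish.

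This step is the only real obstacle, and it needs a small case split.
\begin{itemize}
\item If \(\beta_1=0\), then \(\beta_2\ne0\), and \(\theta=(\pi/2,\pi/4)\) gives \(g=\beta_2/\sqrt2\ne0\).
\item If \(\beta_2=0\), symmetrically \(\theta=(\pi/4,\pi/2)\) gives \(g=-\beta_1/\sqrt2\ne0\).
\item If both components are nonzero, take \(\theta=(\pi/2,\theta_2)\) with \(\theta_2\in(0,\pi/2)\). Then \(g=\beta_2\cos\theta_2\ne0\).
\end{itemize}
In fact \(\theta=(\pi/2,\pi/4)\) gives \(g=\beta_2/\sqrt2\) and \(\theta=(\pi/4,\pi/2)\) gives \(g=-\beta_1/\sqrt2\). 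Because \(\bfbeta\ne0\), at least one of these two values is nonzero, which settles every case at once.

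A more conceptual alternative is to note that \(g\) is real-analytic and not identically zero on the open square. Its zero set then has empty interior, so \(g\) is nonzero on a dense open set. The explicit evaluation above makes this unnecessary.

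Finally, invoke the inverse function theorem at that \(\theta^*\). Because \((0,\pi)^2\) is open, a small ball around \(\theta^*\) lies inside it, and its image contains an open disc around \(\lambda_h(\theta^*)\). This shows \(\mathcal F_h\) has nonempty interior. It also shows why \(\bfbeta\ne0\) is needed: if \(\bfbeta=0\), then \(b_h\equiv0\) and \(\mathcal F_h\) lies on the real axis.
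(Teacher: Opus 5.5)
Your proof is correct and follows essentially the same route as the paper: you compute the Jacobian determinant \(\frac{2\eps}{h^3}(\beta_2\sin\theta_1\cos\theta_2-\beta_1\sin\theta_2\cos\theta_1)\) of \((a_h,b_h)\), make it nonzero by placing one coordinate at \(\pi/2\), and conclude with the inverse function theorem. The paper evaluates at \((\pi/2,\theta_2)\) when \(\beta_2\ne0\) and at \((\theta_1,\pi/2)\) otherwise, which matches your two test points; it also adds a connectedness remark on \(\mathcal F_h\) that the statement does not need.
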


\begin{proof}
Write the symbol map in real and imaginary parts as
\((a_h,b_h)\). Its Jacobian determinant is
\[
	J(\theta_1,\theta_2)
	=
	\frac{2\eps}{h^3}
	\bigl(\beta_2\sin\theta_1\cos\theta_2
	-
	\beta_1\sin\theta_2\cos\theta_1\bigr).
\]
If \(\beta_2\ne0\), choose \(\theta_2\in(0,\pi/2)\) strictly positive
and sufficiently small; then \((\pi/2,\theta_2)\in(0,\pi)^2\) and
\(J(\pi/2,\theta_2)\ne0\). If \(\beta_2=0\), then \(\beta_1\ne0\), and one
chooses \(\theta_1\in(0,\pi/2)\) strictly positive and sufficiently small so
that \((\theta_1,\pi/2)\in(0,\pi)^2\) and \(J(\theta_1,\pi/2)\ne0\). By the
inverse function theorem, the symbol map is locally open at the selected
interior point, and \(\mathcal F_h\) has nonempty interior. Since
\((0,\pi)^2\) is connected and the symbol map is continuous, the footprint
is connected as the continuous image of a connected set.
\end{proof}

\section{Reference modal operator and nonlocality}
\label{sec:rectification}

\subsection{Dirichlet modal basis}

Introduce the tensor-product sine modes
\[
	\phi_{pq}(i,j)=\frac{2}{N+1}
	\sin\!\Bigl(\frac{ip\pi}{N+1}\Bigr)\sin\!\Bigl(\frac{jq\pi}{N+1}\Bigr),
	\quad p,q=1,\ldots,N.
\]
These form an orthonormal basis of $\R^{N^2}$ after the interior grid
points are ordered lexicographically row-by-row, and are compatible with
homogeneous Dirichlet boundary
conditions. The complex exponential modes $\psi_{pq}$ are used only
for the interior stencil symbol and footprint; they do not satisfy
homogeneous Dirichlet conditions. The constructions below are therefore
reference modal constructions associated with the interior symbol, not
exact diagonalizations of the full finite matrix.

\subsection{Modal rectification and reference operator}

Set $\apq=\operatorname{Re}(\lhpq)$, $\bpq=\operatorname{Im}(\lhpq)$.
\emph{Directional modal rectification} replaces the complex symbol
$\lhpq=\apq+\mathrm{i}\bpq$ by the real positive symbol
$\abs{\lhpq}=\sqrt{\apq^2+\bpq^2}$. Since
\(|\lhpq|=\sqrt{\apq^2+\bpq^2}\ge\apq\), with strict inequality whenever
\(\bpq\ne0\), the modulus removes the modal phase associated with the skew
convective part while increasing the effective real part relative to the
purely symmetric contribution, without acting on the right-hand side. The purpose of this rectification is not to replace the physical convection operator by a different equation. It is to identify which modal components would require additional symmetric damping if one wants to reduce phase-driven extrema violations while keeping a directional interpretation. ADSC uses this rectified symbol only as a design target: the actual method remains the centered convection--diffusion discretization augmented by a local positive edge correction.

\begin{definition}[Discrete reference modal operator]\label{def:lstar}
	The discrete reference modal operator $\Lstar$ is defined in the
	tensor-product modal basis by
	$\Lstar\phi_{pq}=\abs{\lhpq}\phi_{pq}$, $p,q=1,\ldots,N$.
\end{definition}

\paragraph{Constructed reference operator and symbolic consistency.}
The matrix \(\Phi\) is used here only as a Dirichlet-compatible
	orthonormal modal basis in which the sampled interior-symbol values
	\(|\lambda_h(\theta_p,\theta_q)|\) are assigned to the corresponding
	sine modes. It does not diagonalize the full nonsymmetric Dirichlet
	matrix \(\Kh\); it diagonalizes only the standard symmetric tensor-product
	Dirichlet Laplacian. Thus \(\Lstar=\Phi D^*\Phi^T\) is a constructed
	reference modal operator, not a spectral factorization or functional
	calculus of \(\Kh\). Its role is to describe the ideal dissipative effect
	of modal rectification and to serve as a theoretical target for the
	design of sparse local surrogates.

	As clarified below, this operator is not introduced as a consistent discretization of the
	original convection--diffusion operator. The original interior symbol is
	\(a_h+\mathrm{i}b_h\), whereas \(\Lstar\) uses the positive symbol
	\(|a_h+\mathrm{i}b_h|\). Its consistency is therefore symbolic: for
	low frequencies \(\theta=h\xi\), it approximates the modulus of the
	principal convection--diffusion symbol, not the original skew convective
	operator itself. This distinction is essential: \(\Lstar\) is a
	rectified reference used to motivate local damping, while the ADSC
	operator below is the actual local stabilization added to the centered
	discretization.

\subsection{Formal symbolic limit on \texorpdfstring{\(\R^2\)}{R2}}

This subsection is interpretive and is not used in the convergence proof.
In the formal whole-space limit, as the local stencil symbol is replaced
by its continuous counterpart on \(\R^2\) with constant \(\bfbeta\), modal
rectification corresponds to the operator with symbol
\[
	\mu^*(\bfxi)
	= \sqrt{\eps^2|\bfxi|^4+(\bfbeta\cdot\bfxi)^2},
	\quad \bfxi\in\R^2.
\]
Indeed, for $e^{\mathrm{i}\bfxi\cdot x}$, the symbol of $-\eps\Delta$ is
$\eps|\bfxi|^2$, and the symbol of $\bfbeta\cdot\nabla$ is
$\mathrm{i}\bfbeta\cdot\bfxi$. Replacing the complex symbol
$\eps|\bfxi|^2+\mathrm{i}\bfbeta\cdot\bfxi$ by its modulus gives
$\mu^*(\bfxi)$. The function \(\mu^*(\bfxi)\) is therefore the symbol of
the formal pseudo-differential square root of
\(\eps^2\Delta^2-(\bfbeta\cdot\nabla)^2\), i.e., at the symbolic level,
$L^* = \bigl[\eps^2(-\Delta)^2+(\bfbeta\cdot\nabla)^*(\bfbeta\cdot\nabla)\bigr]^{1/2}$,
where $(\bfbeta\cdot\nabla)^*=-\bfbeta\cdot\nabla$ is the formal
$L^2(\R^2)$-adjoint (valid since $\bfbeta$ is constant and divergence-free).

This is only a whole-space symbolic statement. On the bounded domain with
Dirichlet conditions, no functional calculus for $\Kh$ is invoked; the
expression is used only to interpret modal rectification geometrically at the
continuous level. In particular, $\mu^*(\bfxi)$ depends on the projection of
$\bfxi$ onto $\bfbeta$, reflecting the directional nature of the
rectification. No analytical functional calculus for the finite operator
\(L_h^\ast\) is inferred from this formal limit. The displayed whole-space
expression should not be read as an operator-theoretic statement on
\(H^1_0(\Omega)\) for the bounded Dirichlet problem; only its Fourier-symbol
interpretation is used in the sequel.

\subsection{Nonlocality of exact modal rectification}

Let $\Phi$ be the orthogonal matrix whose columns are the ordered modes
$\phi_{pq}$. By Definition~\ref{def:lstar}, the reference operator is
\(\Lstar=\Phi D^*\Phi^T\), where
\(D^*=\operatorname{diag}(\abs{\lhpq})\). This identity is a definition of
\(\Lstar\) in the chosen sine basis, not a diagonalization of the full
Dirichlet matrix \(\Kh\). Because \(D^*\) is real positive diagonal and
\(\Phi\) is real orthogonal, \(\Lstar\) is a real symmetric positive
definite matrix.

\begin{proposition}[Nonlocality of exact modal rectification]
\label{prop:nonlocality}
Assume that \(\bfbeta\neq 0\). The rectified symbol
\[
m_h(\theta_1,\theta_2)
=
|\lambda_h(\theta_1,\theta_2)|
=
\sqrt{a_h(\theta_1,\theta_2)^2+b_h(\theta_1,\theta_2)^2}
\]
viewed as a \(2\pi\)-periodic continuous function on the
two-dimensional torus, is not a trigonometric polynomial.
Consequently, the reference modal operator \(L_h^\ast\), defined by
\[
L_h^\ast \phi_{pq}
=
m_h(\theta_p,\theta_q)\phi_{pq},
\]
cannot be represented exactly by a Cartesian stencil of fixed bandwidth
independent of \(N\). In the pure diffusion case \(\bfbeta=0\), one has
\(m_h=a_h\), and the rectified operator reduces to the standard local
Laplacian stencil.
\end{proposition}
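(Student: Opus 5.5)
The proof has three steps: show that \(m_h\) is not a trigonometric polynomial, transfer this to the sine-mode operator \(L_h^\ast\), and treat the case \(\bfbeta=0\).

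\textbf{Step 1: \(m_h\) is not a trigonometric polynomial.} Argue by contradiction and suppose \(m_h=T\) for a trigonometric polynomial \(T\) in \((\theta_1,\theta_2)\). Then
\[
T^2=a_h^2+b_h^2
\]
holds as an identity of trigonometric polynomials on the torus. Complexify with \(z_k=e^{\mathrm{i}\theta_k}\). After multiplying by a monomial, \(T\) and \(a_h^2+b_h^2\) become Laurent polynomials, and the identity holds on \(\mathbb T^2\), hence as polynomials in \(z_1,z_2\). So \(m_h\) would be a trigonometric polynomial only if \(a_h^2+b_h^2\) were a perfect square in the Laurent polynomial ring. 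Restricting to a one-variable slice decides this.

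\emph{Case \(\beta_1\neq0\).} Fix \(\theta_2=0\), so
\[
a_h^2+b_h^2=\frac{4\eps^2}{h^4}(1-\cos\theta_1)^2+\frac{\beta_1^2}{h^2}\sin^2\theta_1 .
\]
Write \(c=\cos\theta_1\) and use \(\sin^2\theta_1=(1-c)(1+c)\). This factors as
\[
(1-c)\,g(c),\qquad g(c)=\frac{4\eps^2}{h^4}(1-c)+\frac{\beta_1^2}{h^2}(1+c).
\]
The linear polynomial \(g\) has value \(2\beta_1^2/h^2\neq0\) at \(c=1\), so it is not a multiple of \(1-c\).

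Near \(\theta_1=0\) this gives \(m_h(\theta_1,0)=|\beta_1|\,|\sin\theta_1|/h+O(\theta_1^2)\). That expression has a corner at \(\theta_1=0\): the one-sided derivatives there are \(\pm|\beta_1|/h\). A trigonometric polynomial is smooth, so this is already a contradiction.

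\emph{Case \(\beta_1=0\).} Then \(\beta_2\neq0\), and the same argument on the slice \(\theta_1=0\) gives a corner at \(\theta_2=0\) with one-sided derivatives \(\pm|\beta_2|/h\).

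The cleanest route is therefore non-differentiability at the origin of the torus. On the line \(\theta=t\omega\) with \(\omega\) chosen so that \(\bfbeta\cdot\omega\neq0\),
\[
m_h(t\omega)=|\bfbeta\cdot\omega|\,|t|/h+O(t^2),
\]
which is not \(C^1\) at \(t=0\), whereas every trigonometric polynomial is \(C^\infty\). This sidesteps any algebraic factorization.

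\textbf{Step 2: transfer to the operator.} This is the main obstacle, because \(L_h^\ast\) is defined through the sine modes on a finite grid, not through the periodic symbol. Suppose \(L_h^\ast\) were given exactly by a fixed stencil \(\{c_{rs}\}_{|r|,|s|\le B}\) with \(B\) independent of \(N\), at least at interior nodes far from the boundary. Evaluate \(L_h^\ast\phi_{pq}=m_h(\theta_p,\theta_q)\phi_{pq}\) at such nodes.

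\emph{Isolating the symmetrized symbol.} The stencil applied to \(\sin(i\theta_p)\sin(j\theta_q)\) produces combinations of \(\sin\) and \(\cos\) in \(i\) and \(j\). Varying \(i\) and \(j\) over a block of interior nodes separates these components. The coefficient of \(\phi_{pq}\) must equal the symmetrized trigonometric polynomial
\[
S(\theta_1,\theta_2)=\sum_{r,s} c_{rs}\cos(r\theta_1)\cos(s\theta_2)
\]
evaluated at \((\theta_p,\theta_q)\). The remaining sine-cosine cross terms must vanish.

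\emph{Comparing with \(m_h\).} This gives \(S(\theta_p,\theta_q)=m_h(\theta_p,\theta_q)\) on the grid. The grids \(\{\theta_p\}\) become dense in \((0,\pi)^2\) as \(N\to\infty\), and \(S\), \(m_h\) are continuous with the \(c_{rs}\) fixed. Hence \(S=m_h\) on \([0,\pi]^2\).

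\emph{Reaching the contradiction.} Both \(S\) and \(m_h\) are even in each variable, since \(a_h\) is even and \(b_h\) enters only through \(b_h^2\). So \(S=m_h\) extends to the whole torus, contradicting Step 1.

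A scaling subtlety remains, because the symbol carries the prefactors \(1/h^2\) and \(1/h\). I would handle it by working at fixed \(h\)-scaled coefficients, i.e. with \(h^2L_h^\ast\) and with the Péclet ratio held fixed. Alternatively, I would state the claim for each fixed symbol map and show that no single stencil works for all sufficiently large \(N\).

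\textbf{Step 3: the case \(\bfbeta=0\).} Here \(b_h\equiv0\) and \(a_h>0\), so \(m_h=a_h\). Since \(\Phi\) diagonalizes the Dirichlet five-point Laplacian with eigenvalues \(a_h(\theta_p,\theta_q)\), it follows that \(L_h^\ast=\Phi\operatorname{diag}(a_{pq})\Phi^T\) is exactly that local Laplacian stencil.
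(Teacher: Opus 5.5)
Your Steps~1 and~3 reproduce the paper's argument. The paper also restricts \(m_h\) to a line \(te\) with \(\bfbeta\cdot e\neq0\), obtains the corner \(m_h(te)=|\bfbeta\cdot e|\,|t|/h+O(t^2)\), and uses that trigonometric polynomials are smooth; your perfect-square detour is not needed. For the passage to \(L_h^\ast\), the paper only asserts that a fixed-bandwidth stencil has a trigonometric-polynomial symbol on the torus. It thus reasons with one fixed symbol, which is essentially your fallback~(b).

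Your finite-grid Step~2 attempts more, and its first part is correct. On a \(2\times2\) block of nodes at distance \(>B\) from the boundary, the four products of \(\sin(i\theta_p),\cos(i\theta_p)\) with \(\sin(j\theta_q),\cos(j\theta_q)\) are linearly independent, because the relevant determinants are \(-\sin\theta_p\) and \(-\sin\theta_q\), both nonzero. Hence \(S(\theta_p,\theta_q)=m_h(\theta_p,\theta_q)\). Two later steps fail as written.

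\textbf{Evenness.} \(m_h\) is not even in each variable separately, since \(b_h(-\theta_1,\theta_2)^2-b_h(\theta_1,\theta_2)^2=-4\beta_1\beta_2\sin\theta_1\sin\theta_2/h^2\). For oblique \(\bfbeta\) the identity \(S=m_h\) therefore does not extend from \([0,\pi]^2\) to the torus. You can repair this by staying inside the quadrant. A cosine polynomial satisfies \(\partial_{\theta_1}S(0,0)=0\), whereas \(m_h(\cdot,0)\) has right derivative \(|\beta_1|/h\) at \(0\); use \(\theta_2\) instead if \(\beta_1=0\).

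\textbf{Density.} This is the more serious problem. The density step needs one fixed function, but in the statement \(\eps,\bfbeta\) are fixed and \(h=1/(N+1)\). So \(m_h\) and the stencil coefficients both change with \(N\); already for \(\bfbeta=0\) the coefficients scale like \(\eps/h^2\). Rescaling does not help. Set \(F_0=2-\cos\theta_1-\cos\theta_2\), \(b=\beta_1\sin\theta_1+\beta_2\sin\theta_2\) and \(\lambda=h/(2\eps)\). Then \(\frac{h^2}{2\eps}m_h=\sqrt{F_0^2+\lambda^2b^2}\to F_0\) uniformly, and \(F_0\) is a trigonometric polynomial, so the corner disappears in the limit. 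Your fallbacks (fixed P\'eclet number, or a symbol decoupled from \(N\), which is the paper's reading) change the family of operators.

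For the literal fixed-\(\eps\) claim, subtract the Laplacian part. Let \(S_N\) be your cosine polynomial at level \(N\), and set \(R_N=\bigl(\frac{h^2}{2\eps}S_N-F_0\bigr)/\lambda^2\).
\begin{itemize}
\item \(R_N\) has bidegree at most \(\max(B,1)\). On the grid it equals \(b^2/\bigl(\sqrt{F_0^2+\lambda^2b^2}+F_0\bigr)\), which lies in \([0,|\bfbeta|^2]\) because \(b^2\le2|\bfbeta|^2F_0\).
\item Bernstein's inequality makes \(R_N\) uniformly bounded once \(N\) is large compared with \(B\). A subsequence then converges to a cosine polynomial \(R\) with \(R=b^2/(2F_0)\) on \((0,\pi)^2\).
\item Along \(te\) with \(e=(\cos\phi,\sin\phi)\), \(\phi\in(0,\pi/2)\), this function tends to \((\bfbeta\cdot e)^2\). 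Since \(\bfbeta\neq0\), this limit varies with \(\phi\), which contradicts continuity of \(R\) at the origin.
\end{itemize}
In the fixed-\(\eps\) regime the obstruction is therefore a direction-dependent limit of the first-order correction, not the corner of \(m_h\).
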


\begin{proof}
A Cartesian stencil with fixed bandwidth has, at the level of the interior
Fourier representation on the torus, a symbol that is a trigonometric
polynomial in \((\theta_1,\theta_2)\). Hence an
exact fixed-stencil realization of \(L_h^\ast\), valid independently of the
grid resolution, would require the periodic rectified symbol
\(m_h=|\lambda_h|\) to coincide with such a trigonometric polynomial on the
torus.

We show that this is impossible when \(\bfbeta\neq0\). Near
\(\theta=0\), one has
\[
a_h(\theta)
=
\frac{2\varepsilon}{h^2}(2-\cos\theta_1-\cos\theta_2)
=
\frac{\varepsilon}{h^2}|\theta|^2+O(|\theta|^4),
\]
and
\[
b_h(\theta)
=
\frac{1}{h}(\beta_1\sin\theta_1+\beta_2\sin\theta_2)
=
\frac{\bfbeta\cdot\theta}{h}+O(|\theta|^3).
\]
Choose a direction \(e\in\mathbb R^2\) such that \(\bfbeta\cdot e\neq0\).
Then, as \(t\to0\),
\[
m_h(te)
=
\sqrt{a_h(te)^2+b_h(te)^2}
=
\frac{|\bfbeta\cdot e|}{h}|t|+O(t^2).
\]
Extend \(m_h\) periodically to \((-\pi,\pi)^2\). The point \(\theta=0\)
is then an interior point of the periodic chart of the torus, so differentiability
can be tested along lines \(te\) crossing the origin. If \(m_h\) coincided on
the torus with a trigonometric polynomial, then that polynomial would provide
a \(C^\infty\), indeed real analytic, periodic representative of \(m_h\) in a
neighborhood of the origin. However,
\(m_h(te)\) has a corner singularity (a non-differentiable point) at
\(t=0\). Indeed,
\[
	\frac{d}{dt}\Big|_{t=0^+}m_h(te)
	=\frac{|\bfbeta\cdot e|}{h}
	\ne
	-\frac{|\bfbeta\cdot e|}{h}
	=\frac{d}{dt}\Big|_{t=0^-}m_h(te).
\]
Every trigonometric polynomial is real analytic, hence \(C^1\), on the
torus~\citep{Zygmund2002}. Therefore \(m_h\) cannot be a trigonometric
polynomial.

It follows that no fixed-bandwidth Cartesian stencil can have \(m_h\) as
its exact Fourier symbol. Hence exact modal rectification is local in
frequency but nonlocal in the nodal representation. When \(\bfbeta=0\),
\(b_h=0\) and \(m_h=a_h\), which is precisely the standard degree-one
Laplacian symbol.
\end{proof}
The obstruction is structural: exact modal rectification is
	frequency-local (acting independently on each pair $(p,q)$) but
	physically nonlocal (generating long-range nodal couplings after basis
	change). This is the fundamental reason why practical stabilizations of
	fixed stencil width can only approximate the dissipative effects of
	ideal modal rectification. The observation is an instance, in this
	discrete-symbol setting, of the standard pseudo-differential fact that
	non-polynomial functions of differential symbols generally produce
	nonlocal operators~\citep{Taylor1981,Shubin2001}; the novelty claimed
	here is the consequence for the present modal rectification design, not
	the general pseudo-differential principle itself.

\subsection{Structural meaning of sparsification}

The Frobenius viewpoint is retained only as a structural warning. It says
that forcing the dense constructed rectified reference operator into a
prescribed sparse pattern loses nonlocal modal information. It is not used as
a stabilization rule for ADSC: a Frobenius projection would not by itself
preserve monotonicity, positivity, or energy stability. ADSC is instead
designed separately as a symmetric positive semidefinite nearest-neighbor edge
correction.

\subsection{Effect of the right-hand side}

Let $f=\sum_{p,q}\hat{f}_{pq}\phi_{pq}$. The solution of
$\Lstar u^*=f$ satisfies
\begin{equation}\label{eq:lstar_rhs_modal}
	\hat{u}^*_{pq}=\frac{\hat{f}_{pq}}{|\lhpq|}.
\end{equation}
Since
\[
	|\lhpq|=\sqrt{a_{pq}^{2}+b_{pq}^{2}}\ge a_{pq},
\]
modal rectification replaces the complex Galerkin symbol
$a_{pq}+\mathrm{i} b_{pq}$ by a real positive symbol with the same modulus.
Thus it removes the modal phase associated with the skew convective part while
retaining the magnitude of the complex symbol. Relative to the purely
symmetric diffusive part $a_{pq}$, it increases the effective real dissipation
on modes for which $b_{pq}\ne0$.

\section{Modal interpretation of classical stabilizations}
\label{sec:stabilizations}

\subsection{Dissipative correction and modal response}

Let $K^{\mathrm{stab}}_h=\Kh+\Sh$ with $\Sh=\Sh^T\ge 0$.
For each modal pair, define
$\delta_{pq}=\langle\Sh\phi_{pq},\phi_{pq}\rangle/\norm{\phi_{pq}}^2\ge 0$.
The stabilized modal response in the interior-stencil interpretation is
\[
	\hat{u}^{\mathrm{stab}}_{pq}
	\approx \frac{\hat{f}_{pq}}{\lhpq+\delta_{pq}},
\]
where ``$\approx$'' denotes the Rayleigh-quotient approximation of the
modal action of $\Sh$ via the sine modes $\phi_{pq}$ -- an approximation
valid when boundary effects are negligible, i.e., when the interior
region dominates (large $N$). This display is not used as an error-bound
statement; it is a modal scaling model for interpreting the dissipative
increment \(\delta_{pq}\). A symmetric positive correction increases
the effective real part of the modal symbol while leaving the convective
imaginary part $b_h$ unchanged.

\subsection{Coordinate upwinding and SUPG}

\paragraph{Coordinate upwinding.}
This method adds componentwise artificial diffusion aligned with the
coordinate axes, represented by the diffusion tensor
$D^{\mathrm{up}}=(h/2)\operatorname{diag}(|\beta_1|,|\beta_2|)$.
For the modal wave vector
$\bfxi_{pq}=(\theta_p/h,\theta_q/h)^T$, the associated modal
dissipative increment is proportional to
$\bfxi_{pq}^T D^{\mathrm{up}}\bfxi_{pq}
	=(h/2)(|\beta_1|\xi_p^2+|\beta_2|\xi_q^2)$.

\paragraph{SUPG.}
SUPG adds streamline diffusion with tensor
$D^{\mathrm{SUPG}}=\tau\bfbeta\otimes\bfbeta$,
$\tau\approx h/(2|\bfbeta|)$,
giving modal increment proportional to
$\bfxi_{pq}^T D^{\mathrm{SUPG}}\bfxi_{pq}=\tau(\bfbeta\cdot\bfxi_{pq})^2$.

\paragraph{Key distinction.}
Coordinate upwinding damps coordinate-aligned oscillations according
to the individual components of $\bfbeta$; SUPG damps preferentially
along the streamline direction. This follows directly from the classical
upwind and SUPG diffusion tensors~\citep{Brooks1982,Roos2008,John2018};
we do not claim the tensor distinction itself as new. The contribution
here is to insert this distinction into the same interior-symbol footprint
used for the ADSC design, so that upwinding, SUPG, and ADSC are compared
mode by mode in the two-dimensional Cartesian setting. This distinction is invisible in the
reduced one-dimensional interior-symbol setting because the coordinate
axis coincides with the streamline, so both corrections add diffusion in
the same direction and reduce to
$|\beta|h/2$ for the same local stencil normalization. Indeed, for
\(\bfbeta=(\beta,0)\) and \(\tau=h/(2|\beta|)\),
\[
	\tau(\bfbeta\cdot\bfxi)^2
	=\frac{h}{2|\beta|}\beta^2\xi^2
	=\frac{|\beta|h}{2}\xi^2,
\]
which is the same modal diffusion increment as coordinate upwinding in
one dimension. The distinction becomes
essential in two dimensions whenever $\bfbeta$ is not aligned with a
coordinate axis. This statement concerns the local interior symbol; it is
specific to the finite-difference streamline correction used here. Full
finite-element SUPG matrices, especially with consistent mass matrices and
Dirichlet boundary conditions, need not be identical to upwind matrices on
bounded domains.

\paragraph{Why exact isotropic modal correction is not practical.}
As a reference case, suppose the ideal correction is restricted to isotropic
scalar artificial diffusion. Then
the scalar coefficient for mode $(p,q)$ satisfying
$\apq+\alpha^*_{pq}\,(2/h^2)(2-\cos\theta_p-\cos\theta_q)
	=\abs{\lhpq}$
evaluates to
\begin{equation}\label{eq:alpha_star}
	\alpha^*_{pq}
	= \eps\left(\frac{1}{\cos\varphi_{pq}}-1\right),
	\quad\varphi_{pq}=\arctan\rpq.
\end{equation}
This follows from
\(\apq+\alpha^*_{pq}\,2(2-\cos\theta_p-\cos\theta_q)/h^2=|\lhpq|\)
and \(\apq=|\lhpq|\cos\varphi_{pq}\).
This coefficient depends on $(p,q)$ and diverges as $\rpq\to\infty$
(modes with large $\rho_{pq}$ would require very large isotropic
diffusion to be fully rectified). A uniform artificial diffusion
corresponds to a single scalar applied to all modes, inevitably
over-damping low-frequency modes and under-damping high-frequency ones.
Thus the ideal isotropic correction is not a practical prescription; its
role is only to expose the nonlocal and mode-dependent character of exact
rectification, which ADSC approximates through bounded local edge diffusion.
The coefficient \(\alpha^*_{pq}\) in~\eqref{eq:alpha_star} is not used as a numerical parameter in
ADSC.

\subsection{Schematic CIP, LPS, and AFC modal positioning}

Table~\ref{tab:method_positioning} summarizes these distinctions after the symbolic descriptions below. The following descriptions are symbolic positioning statements. They locate
the characteristic dissipative mechanisms of CIP, LPS, and AFC relative to
ADSC in the present structured-grid modal framework; they should not be read
as complete analyses of all calibrated production implementations of these
methods.

\paragraph{CIP~\citep{Burman2004,Burman2007}.}
The interior-edge penalty adds contributions proportional to
$(1-\cos\theta_r)^2/h^2$ per coordinate direction, targeting
high-frequency oscillations more selectively than coordinate upwinding:
for small \(\theta_r\), the CIP factor scales like \(\theta_r^4/h^2\),
whereas the usual diffusion factor scales like \(\theta_r^2/h^2\), so the
penalty is concentrated on short-wave modes. At the modal level, CIP acts
as a high-pass filter biased toward fourth-order suppression of short-wave
modes.

\paragraph{LPS~\citep{Braack2006}.}
Local projection stabilization projects the streamline derivative onto
a coarser space, introducing dissipation only for the component not
representable on the coarse projection space. Modally, LPS acts as a
band-pass filter above the coarse-space resolution threshold.

\paragraph{AFC~\citep{Kuzmin2005}.}
Algebraic flux correction applies a nonlinear flux-limiting procedure
to enforce local monotonicity. Its modal footprint is solution-dependent
and cannot be described by a fixed symbol.

Recent developments confirm that AFC and LPS remain active research
directions beyond the schematic structured-grid representatives considered
here. Examples include high-order AFC schemes based on Bernstein finite
elements with skew-symmetrized discrete gradient operators
\citep{Hajduk2025}, element-based convex limiting with WENO-type smoothness
sensors for continuous Galerkin discretizations
\citep{KuzminHajdukVedral2025}, and LPS extensions to
\(\boldsymbol H(\mathrm{curl})\) and \(\boldsymbol H(\mathrm{div})\)
advection problems \citep{LuoWangWu2025}. These references are cited to
position the present work with respect to recent activity in AFC and LPS;
the numerical representatives used below are not intended to reflect the
state of the art of these families.

These positions motivate ADSC as a direction-aware local surrogate that
selectively targets convectively dominant modes identified by $\rpq$,
without attempting to replace calibrated flux-limiting or projection-based
stabilizations.

\section{ADSC as a local surrogate for directional modal rectification}
\label{sec:adsc}

Proposition~\ref{prop:nonlocality} and the Frobenius diagnostic discussion
show that exact modal rectification is intrinsically nonlocal in the nodal
basis, while any fixed local sparse pattern necessarily ignores part of the
rectified reference operator. A practical correction must therefore be a
local surrogate rather than an exact realization of \(\Lstar\). The
construction of ADSC is based on four structural requirements inherited
from this obstruction and from the modal interpretation: positivity,
sparsity, directionality, and adaptive activation.

\paragraph{ADSC design class.}
We do not claim that the following requirements characterize all possible
stabilizations. Other local graph-diffusion choices are possible; this paper
analyzes one member of that broader class. The requirements below define the
particular local surrogate class studied here. Within the class of Cartesian nearest-neighbor
coordinate-edge corrections, we impose:
\begin{enumerate}[label=(P\arabic*)]
\item symmetric positive semidefiniteness;
\item fixed stencil bandwidth independent of \(N\);
\item dependence on the convection direction through the coordinate
      decomposition \(\bfbeta\cdot\nabla=\beta_1\partial_x+
      \beta_2\partial_y\), with edge weights normalized at the mesh-level
      coordinate-upwind scale \(|\beta_r|h\) in each active coordinate
      direction;
\item adaptive reinforcement confined to edges adjacent to nodes where the
      computed solution exhibits directional non-monotonicity;
\end{enumerate}
These requirements lead to the edge-diffusion form
\[
 S_h = D_x^T W_x D_x + D_y^T W_y D_y,
 \qquad W_x\ge0,\quad W_y\ge0,
\]
with diagonal nonnegative weights of the form
\[
 W_r = W_r^{0}+W_r^{1}(\chi), \qquad r\in\{x,y\},
\]
where the baseline part is present everywhere and the reinforced part is
activated only through the selected directional activation field. The ADSC choice
\eqref{eq:adsc_correction}--\eqref{eq:edge_coeffs} is the particular
realization of this normalized structure; its dimensionless
amplitudes are the parameters \(\gamma_0\) and \(\gamma_1\).

The paragraph above is a construction, not a uniqueness theorem. The
conditions intentionally encode the modeling choices made for ADSC:
coordinate-edge sparsity, positivity, directional weighting, and
activation reinforcement. The scales \(|\beta_r|h\) are a normalization of
this chosen class, while the dimensionless amplitudes are set by the
modal-balance parameter class below.

\begin{table}[htbp]
\centering
\scriptsize
\caption{Structural positioning of ADSC relative to classical stabilizations. The entries are schematic and refer to the structured-grid setting used in this work; AFC-inspired edge-diffusion comparator denotes a diagnostic representative, not a calibrated production AFC implementation.}
\label{tab:method_positioning}
\resizebox{\textwidth}{!}{%
\begin{tabular}{llllll}
\toprule
Property & Galerkin & Coord.\ upwind & SUPG & AFC-inspired & ADSC \\
\midrule
Operator form & \(K_h\) & \(K_h+D_h^{up}\) & \(K_h+\tau D_\beta^TD_\beta\) & \(K_h+S_h^{AFC}(U)\) & \(K_h+S_h^{ADSC}(\chi)\) \\
PSD correction & No & Yes & Yes & nonlinear PSD surrogate & Yes \\
Fixed bandwidth & Yes & Yes & Yes & Yes in this representative & Yes \\
Directional weighting & none & coordinate axes & streamline & solution-dependent & \(\bfbeta\)-decomposed \\
Adaptive activation & none & none & none & limiter-based & directional detector \\
Nonsingularity & diffusion coercivity & diffusion plus PSD & diffusion plus PSD & limiter-dependent & fixed-activation nonsingularity by coercivity \\
Parameter principle & none & monotone upwind scale & streamline scale \(\tau\) & flux limiting & modal-balance scaling \\
\bottomrule
\end{tabular}
}
\end{table}

\subsection{Sparse directional dissipative correction}

Let $D_x$, $D_y$ denote the forward edge-difference operators with factor
$1/h$:
$(D_x U)_{i+\frac{1}{2},j}=(U_{i+1,j}-U_{ij})/h$,
$(D_y U)_{i,j+\frac{1}{2}}=(U_{i,j+1}-U_{ij})/h$.
Their transposes \(D_x^T\) and \(D_y^T\) are the adjoint backward
edge-difference operators, so that \(D_x^TW_xD_x\) and \(D_y^TW_yD_y\)
are standard one-dimensional diffusion stencils with edge weights.
The ADSC correction is
\begin{equation}\label{eq:adsc_correction}
	S^{\mathrm{ADSC}}_h=D_x^T W_x D_x+D_y^T W_y D_y,
	\quad W_x\ge 0,\; W_y\ge 0,
\end{equation}
where $W_x$, $W_y$ are diagonal matrices of nonnegative edge-diffusion
coefficients. Since $W_x,W_y\ge 0$, $S^{\mathrm{ADSC}}_h$ is symmetric
positive semidefinite: it increases the effective real part of the modal
symbol while leaving $b_h$ unchanged.

\subsection{Directional monotonicity principle and regularized detector}
\label{subsec:regularized_detector}

In physical space, convectively dominant modes appear through local changes
of monotonicity along the transport direction. Set
\(\beta_r^+=\max(\beta_r,0)\), \(\beta_r^-=\min(\beta_r,0)\), \(r=1,2\).
The backward and forward directional differences are
\begin{align*}
	D^{-}_{\bfbeta} U
	&=\beta_1^+D^-_x U+\beta_1^-D^+_x U
	+\beta_2^+D^-_y U+\beta_2^-D^+_y U,\\
	D^{+}_{\bfbeta} U
	&=\beta_1^+D^+_x U+\beta_1^-D^-_x U
	+\beta_2^+D^+_y U+\beta_2^-D^-_y U.
\end{align*}
Here \(D^-_{\bfbeta}U\) is the backward (upwind) directional difference
and \(D^+_{\bfbeta}U\) is the forward (downwind) directional difference
along \(\bfbeta\).

The sharp active-set detector used in the binary implementation is
\[
\chi^{\rm sh}_{ij}
=
\mathbf{1}_{\{(D^{-}_{\bfbeta} U)_{ij}(D^{+}_{\bfbeta} U)_{ij}<0\}}.
\]
It equals one when the discrete solution exhibits a sign change of
successive directional increments along \(\bfbeta\), that is, when a local
directional maximum or minimum is detected. The detector is not a modal
projector; it is a physical-space proxy for directional oscillatory changes
used to transfer the modal rectification principle into a local edge-based
activation rule.

For the consistency analysis, however, a discontinuous binary detector is
too sharp: it can generate grid-scale jumps in the artificial-diffusion
coefficients. We therefore also introduce a regularized directional
detector. Define the normalized directional non-monotonicity score
\begin{equation}\label{eq:theta_detector}
\Theta_{ij}(U)
=
\frac{
2\left[-(D^-_{\bfbeta} U)_{ij}(D^+_{\bfbeta} U)_{ij}\right]_+
}{
(D^-_{\bfbeta} U)_{ij}^{2}+(D^+_{\bfbeta} U)_{ij}^{2}+\delta_h
},
\qquad \delta_h>0,
\end{equation}
where \([z]_+=\max(z,0)\). The quantity \(\Theta_{ij}\) vanishes when the
two directional increments have the same sign and becomes positive when a
directional change of monotonicity occurs. The denominator normalizes the
indicator and prevents singular behavior near flat regions. In the
analysis below we assume \(\delta_h\ge\delta_0>0\) and a uniformly
Lipschitz activation, which gives a clean coefficient-variation estimate
with mesh-independent proof constants. The computations use a very small
regularization parameter and should therefore be read as numerical evidence
for the sharp regularized regime, not as a direct verification of those
uniform-Lipschitz constants. In particular, the proofs apply formally for
a fixed positive lower bound \(\delta_0\), whereas the computations use
\(\delta_h=10^{-12}\) as a sharp-regularized numerical regime. The numerical
values of \(\delta_h\) and \(\eta_{\rm det}\) are reported in
Section~\ref{sec:numerics}. Thus the numerical setting approximates the
sharp-detector limit and lies outside the mesh-uniform Lipschitz constants
assumed in the proof.

Let \(s_h:[0,\infty)\to[0,1]\) be a Lipschitz activation function such
that \(s_h(0)=0\) and \(s_h(t)\) increases toward one. We set
\begin{equation}\label{eq:regularized_chi}
\chi^h_{ij}=s_h(\Theta_{ij}(U)).
\end{equation}
A simple admissible choice is
\[
s_h(t)=\frac{t}{t+\eta_{\rm det}},\qquad \eta_{\rm det}>0.
\]
The sharp detector is thus the limiting selective active-set realization
of the same directional monotonicity principle, whereas the regularized
detector provides the analytically controlled version used below to obtain
a discrete variation estimate. This distinction prevents ADSC from being
an ad hoc artificial viscosity: the detector is built from a directional
monotonicity criterion, normalized by local directional gradients, and
regularized only to make the induced edge coefficients compatible with a
consistency argument.
\subsection{Baseline parameter and modal balance}

\paragraph{Activation and baseline law.}
The activation function $\eta_{\Pe}(\Peh)=\max(0,1-1/\Peh)$ vanishes below
$\Peh=1$ and increases monotonically toward 1. The baseline coefficient is
\begin{equation}\label{eq:gamma0}
	\gamma_0(\Peh)=\begin{cases}
		0, & \Peh\le 1, \\
		\gamma_{\min}+(\gamma_{\max}-\gamma_{\min})\dfrac{\Peh-1}{\Peh+1},
		   & \Peh>1,
	\end{cases}
	\qquad
	\gamma_1(\Peh)=\kappa\,\gamma_0(\Peh),\quad\kappa\ge 1.
\end{equation}
The saturating form $(\Peh-1)/(\Peh+1)$ is smooth for \(\Peh>1\),
bounded in \([0,1)\), monotone, and vanishes continuously at
\(\Peh=1\); the analysis below uses only these qualitative properties of
the saturating function.
In the numerical section the constants are selected reproducibly as
follows. First, the Galerkin modal indicator at the target mesh P\'eclet
number is used in the modal-balance estimate below to determine the
order of magnitude of \(\gamma_0\). This indicator is computed from the
interior stencil symbol and the prescribed coefficients, not from the
computed solution, the fine-grid reference solution, or the error tables.
Second, \(\gamma_{\min}\) and \(\gamma_{\max}\) are chosen as conservative
fractions of the raw modal-balance range observed over the tested mesh
family. In Table~\ref{tab:gamma_balance_validation}, that range is
\([0.378,0.456]\); the implemented interval \([0.08,0.25]\) places the
saturating law roughly between one fifth and two thirds of those raw values.
This deliberately keeps the implemented law below full modal balance, so that
the baseline correction does not become a nearly uniform upwind replacement.
The factor \(\kappa\) controls the strength of activation reinforcement. This
choice is heuristic and not error-optimized; systematic parameter
optimization is left open. The sensitivity table in Section~\ref{sec:numerics}
then reports the effect of varying these dimensionless constants. The same
triplet \((\gamma_{\min},\gamma_{\max},\kappa)=(0.08,0.25,2)\) is kept for all
reported tests, including the manufactured and NIST-type benchmarks.

\paragraph{Modal-balance scaling for the baseline parameter.}
The following modal-balance argument provides a scaling rule for the
baseline parameter. When the detector is inactive ($\chi_{ij}\equiv 0$),
the baseline correction has modal increment
\[
	\delta^0_{pq}
	= \frac{2\gamma_0}{h}
	\bigl[|\beta_1|(1-\cos\theta_p)+|\beta_2|(1-\cos\theta_q)\bigr].
\]
Writing $r_p=1-\cos\theta_p$, $r_q=1-\cos\theta_q$,
and using $\apq=2\eps(r_p+r_q)/h^2$, one obtains
\[
	\frac{\delta^0_{pq}}{\apq}=2\gamma_0\Peh\,B_{pq},
	\qquad
	B_{pq}=\frac{|\beta_1|r_p+|\beta_2|r_q}{|\bfbeta|(r_p+r_q)}\in[0,1].
\]
The Rayleigh-quotient corrected modal indicator is therefore
\[
	\tilde{\rho}_{pq}
	=
	\frac{\rho_{pq}}{1+2\gamma_0\Peh B_{pq}}
	+O(\eta_{pq}),
\]
where \(\eta_{pq}\) denotes the relative contribution of the off-diagonal
modal couplings neglected by the one-mode Rayleigh approximation. For the
translation-invariant periodic stencil, \(\eta_{pq}=0\) and the displayed
ratio is the exact symbol-level indicator. For the finite Dirichlet matrix,
the formula is used only as a first-order modal scaling rule for the ADSC
parameters; no uniform error bound is claimed for individual modes.
Requiring $\tilde{\rho}_{pq}\le\bar{\rho}_{\mathrm{target}}$ for a representative
most dominant mode, and averaging over $\Icd$, motivates the modal-balance
scaling
\begin{equation}\label{eq:gamma0_balance}
	\gamma_0\sim\frac{\bar\rho_{\mathrm{Gal}}-1}{2\Peh\,\bar B},
	\qquad
	\bar B=\frac{1}{|\Icd|}\sum_{(p,q)\in\Icd}B_{pq},
\end{equation}
showing that the modal-balance target belongs to the class
$\gamma_0^{\mathrm{bal}}=O(\Peh^{-1})$ when the mean modal indicator is
significantly above the threshold. A reproducible bounded choice used in
practice is
\begin{equation}\label{eq:gamma_projected_choice}
\gamma_0^{\mathrm{bal}}
=
\frac{\max\{\bar\rho_{\mathrm{Gal}}-\rho_{\mathrm{target}},0\}}
{2\Peh\,\bar B},
\qquad
\gamma_0
=
\min\{\gamma_{\max},\max\{\gamma_{\min},\gamma_0^{\mathrm{bal}}\}\},
\end{equation}
with \(\rho_{\mathrm{target}}\ge1\). The lower and upper bounds
\(\gamma_{\min}\) and \(\gamma_{\max}\) are not fitted to the numerical
error; they prevent the modal-balance estimate from producing either a
vanishing correction or an excessively large artificial diffusion.

\begin{proposition}[Conditional modal-balance scaling]
\label{lem:modal_balance_class}
Assume that, on the considered mesh family, the averaged modal-balance factor
\(\bar B\) is bounded away from zero on the Galerkin-dominant modal set and
that \(\bar\rho_{\mathrm{Gal}}\) remains bounded above. Then the balance
relation~\eqref{eq:gamma0_balance} selects a baseline parameter of order
\[
 \gamma_0^{\mathrm{bal}}=O(\Peh^{-1}).
\]
Consequently, the baseline artificial diffusion scale
\(|\bfbeta|h\gamma_0^{\mathrm{bal}}\) is of order
\(|\bfbeta|h/\Peh=2\eps\), and the added baseline correction remains a
bounded local diffusion surrogate rather than a full uniform upwind
replacement.
\end{proposition}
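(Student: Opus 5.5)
The argument is a direct bound on the balance formula \eqref{eq:gamma0_balance}, and its projected version \eqref{eq:gamma_projected_choice}, using the two standing assumptions.

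First I fix constants. By hypothesis there are $B_0>0$ and $R<\infty$, independent of $h$ in the mesh family, with $\bar B\ge B_0$ and $\bar\rho_{\mathrm{Gal}}\le R$. For nonnegativity I use the paper's convention: the relevant numerator is $\max\{\bar\rho_{\mathrm{Gal}}-\rho_{\mathrm{target}},0\}$ with $\rho_{\mathrm{target}}\ge1$. Since every $(p,q)\in\Icd$ has $\rho_{pq}>1$, the average satisfies $\bar\rho_{\mathrm{Gal}}>1$ whenever $\Icd\neq\emptyset$. So in either form the numerator lies in $[0,R-1]$. If $\Icd=\emptyset$, then $\gamma_0^{\mathrm{bal}}=0$ by convention and the claim is trivial.

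The bound then follows in one line:
\[
0\le\gamma_0^{\mathrm{bal}}\le\frac{R-1}{2B_0}\,\Peh^{-1},
\]
which is the asserted $O(\Peh^{-1})$ with an explicit constant $C=(R-1)/(2B_0)$, uniform over the family. To get the diffusion scale, I multiply by $|\bfbeta|h$ and use the definition $\Peh=|\bfbeta|h/(2\eps)$, so that $|\bfbeta|h/\Peh=2\eps$. This gives $|\bfbeta|h\,\gamma_0^{\mathrm{bal}}\le 2C\eps$. The baseline edge weights, of size $\gamma_0|\beta_r|h$ by (P3), are therefore $O(\eps)$, i.e.\ comparable to the physical diffusion. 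By contrast, full coordinate upwinding contributes $|\beta_r|h/2=\Peh\,\eps\,|\beta_r|/|\bfbeta|$, which is larger by a factor of order $\Peh$. I will state this comparison explicitly to justify the phrase ``not a full uniform upwind replacement''.

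The only delicate point is interpretation rather than estimation. The proposition concerns the raw balance value $\gamma_0^{\mathrm{bal}}$, not the clipped implemented $\gamma_0$ of \eqref{eq:gamma_projected_choice}. Clipping from below by $\gamma_{\min}$ can break the $O(\Peh^{-1})$ scaling, so I will remark that the conclusion applies to the balance selection itself, and to the clipped value only when $\gamma_{\min}$ is inactive. I will also note that the hypotheses on $\bar B$ and $\bar\rho_{\mathrm{Gal}}$ are assumed, not proved. Justifying them would require uniform control of the modal average over $\Icd$; that is not claimed here, which is why the result is labeled conditional.
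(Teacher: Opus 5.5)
Your proposal is correct and follows the paper's own argument. You bound the balance formula using \(\bar B\ge B_0>0\) and \(\bar\rho_{\mathrm{Gal}}=O(1)\) to get \(\gamma_0^{\mathrm{bal}}=O(\Peh^{-1})\), then multiply by \(|\bfbeta|h\) and use \(\Peh=|\bfbeta|h/(2\eps)\). Your extra details are sharper than the paper's brief remark that the projected value ``remains bounded by construction'', but they do not change the route: the explicit constant \((R-1)/(2B_0)\), the nonnegativity of the numerator, the explicit comparison with the upwind scale \(|\beta_r|h/2\), and the observation that only clipping from below by \(\gamma_{\min}\) can break the scaling.
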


\begin{proof}
The estimate follows directly from~\eqref{eq:gamma0_balance}. The projected
choice~\eqref{eq:gamma_projected_choice} has the same order whenever the
projection bounds are inactive; when the bounds are active, it remains bounded
by construction. If $\bar B\ge B_0>0$ and $\bar\rho_{\mathrm{Gal}}=O(1)$, then
\[
 \gamma_0^{\mathrm{bal}}
 =\frac{\bar\rho_{\mathrm{Gal}}-1}{2\Peh\bar B}
 =O(\Peh^{-1}).
\]
Multiplying by the mesh-level scale $|\bfbeta|h$ gives
$|\bfbeta|h\gamma_0^{\mathrm{bal}}=O(|\bfbeta|h/\Peh)=O(\eps)$, because
$\Peh=|\bfbeta|h/(2\eps)$. This proves the stated scaling of the baseline
edge diffusion. The reinforced term is then supplied selectively through
$\gamma_1\eta_{\Pe}(\Peh)\chi$ on detected directional non-monotonicity
regions.
\end{proof}
These hypotheses are checked a posteriori in Table~\ref{tab:mesh_diag} for
the tested configurations; they are not consequences of ADSC and are not used
to fit the errors. Their general validity is an open question, and the
modal-balance scaling should not be invoked on mesh families for which
\(\bar B\) degenerates.

Proposition~\ref{lem:modal_balance_class} identifies the parameter class
suggested by the modal-balance argument under explicit boundedness
assumptions on \(\bar B\) and \(\bar\rho_{\mathrm{Gal}}\). These assumptions
are observed in the tested configurations
(Table~\ref{tab:mesh_diag}); their general validity requires further analysis.
They should therefore be read as a conditional numerical hypothesis for
the present modal-design rule, not as an unconditional theorem about all
convection--diffusion data.
They are expected to hold when
the Galerkin-dominant modal set is not concentrated on frequencies for
which the coordinate-weighted numerator in \(B_{pq}\) is nearly zero. The
projected rule~\eqref{eq:gamma_projected_choice} gives a reproducible
implementation of this principle, and the saturating law~\eqref{eq:gamma0}
is a smooth bounded parametrization over the tested P\'eclet range; the constants
$(\gamma_{\min},\gamma_{\max},\kappa)$ are not claimed to be optimal. Their
role is analogous to practical parameter choices in SUPG, CIP, or AFC
implementations: they instantiate a structural scaling rule while the final
performance is assessed numerically.

\paragraph{Edge diffusion coefficients.}
The nodal activation is transferred to edges. In the sharp active-set
implementation one may use the conservative maximum transfer
\[
	\chi^x_{\ell j}=\max(\chi_{\ell j},\chi_{\ell+1,j}),
	\qquad
	\chi^y_{i\ell}=\max(\chi_{i\ell},\chi_{i,\ell+1}).
\]
For the regularized detector, the smoother averaged transfer
\begin{equation}\label{eq:edge_average_detector}
	\chi^x_{i+\frac12,j}
	=\frac12(\chi^h_{ij}+\chi^h_{i+1,j}),
	\qquad
	\chi^y_{i,j+\frac12}
	=\frac12(\chi^h_{ij}+\chi^h_{i,j+1})
\end{equation}
is used in the analysis. A maximum transfer would generally preserve the
sharp active-set character but would not give the same immediate discrete
variation estimate. The associated edge diffusion coefficients are
\begin{equation}\label{eq:edge_coeffs}
	\begin{aligned}
	\alpha^x_{i+\frac12,j}=|\beta_1|h
	\bigl[\gamma_0(\Peh)
	+\gamma_1(\Peh)\eta_{\Pe}(\Peh)\chi^x_{i+\frac12,j}\bigr],\\
	\alpha^y_{i,j+\frac12}=|\beta_2|h
	\bigl[\gamma_0(\Peh)
	+\gamma_1(\Peh)\eta_{\Pe}(\Peh)\chi^y_{i,j+\frac12}\bigr].
	\end{aligned}
\end{equation}
The weights $|\beta_1|$, $|\beta_2|$ distribute the reinforcement along
the Cartesian directions proportionally to the components of $\bfbeta$;
if \(\beta_r=0\), no ADSC edge diffusion is added in that coordinate
direction. This reproduces the anisotropic structure of coordinate
upwinding while confining the reinforcement to detected oscillatory regions. The maximum
transfer gives the sharp binary limiting variant, whereas the computations
and the coefficient-variation estimate below use the averaged regularized
transfer.

\subsection{Monotone activation update and final fixed solve}

In the computations reported below, ADSC is implemented with a monotone
regularized activation field. The Galerkin solution is used as a warm start;
other initializations are reported in Section~\ref{sec:numerics}. Starting
from $U^{(0)}=U^{\mathrm{Gal}}$ and
$\chi^{(0)}\equiv0$, the detector is updated by
\[
	\chi^{(m+1)}
	=
	\chi^{(m)}
	\vee
	\chi(U^{(m)}),
\]
where $\vee$ denotes the componentwise maximum. Thus, once a node has
received a given activation level, that level is not decreased. At step
$m$, one assembles
$W^{(m)}_x$, $W^{(m)}_y$, solves
\[
	\bigl(\Kh+D_x^T W^{(m)}_x D_x+D_y^T W^{(m)}_y D_y\bigr)
	\tilde{U}^{(m+1)}=\mathbf f,
\]
and sets $U^{(m+1)}=(1-\omega)U^{(m)}+\omega\tilde{U}^{(m+1)}$.
The iteration is stopped when the relative change of the activation field is
below the prescribed tolerance; a final linear solve is then performed with
the fixed matrix corresponding to the final activation field.

\begin{proposition}[Nonsingularity and relaxed final solve for a fixed activation]
	\label{prop:adsc_convergence}
	For every fixed activation field \(0\le\chi^\ast\le1\), the associated
	ADSC stabilized matrix is nonsingular. Once the activation field is fixed, the post-processing relaxation toward
	the corresponding fixed stabilized solution is linearly contractive for
	every \(0<\omega\le1\).
\end{proposition}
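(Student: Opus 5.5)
The plan is to prove nonsingularity by a coercivity (energy) argument, and then to read the relaxation as an affine fixed-point map with an explicit contraction factor.

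\emph{Splitting and nonsingularity.} Fix \(0\le\chi^\ast\le1\) and write the stabilized matrix as
\[
A(\chi^\ast)=\Kh+S_h(\chi^\ast).
\]
Split \(\Kh=\eps L_h+C_h\). Here \(L_h\) is the five-point Dirichlet Laplacian, which is symmetric positive definite; its eigenvalues are \(a_{pq}/\eps>0\) in the sine basis \(\phi_{pq}\). The centered convection matrix \(C_h\) is skew-symmetric once boundary values are set to zero. This is checked entry by entry: the coefficient coupling \((i,j)\) to \((i+1,j)\) is \(\beta_1/(2h)\), and the reverse coupling is \(-\beta_1/(2h)\). The matrix is therefore the discrete mirror of the skew-symmetry identity recalled in the weak formulation.

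The edge weights \eqref{eq:edge_coeffs} are nonnegative for any \(\chi^\ast\in[0,1]\), since \(\gamma_0,\gamma_1,\eta_{\Pe}\ge0\) and the averaged transfer stays in \([0,1]\). By \eqref{eq:adsc_correction}, \(S_h(\chi^\ast)\) is therefore symmetric positive semidefinite. Hence, for real \(V\),
\[
V^TA(\chi^\ast)V=\eps V^TL_hV+V^TS_hV\ge\eps\,\lambda_{\min}(L_h)\|V\|^2,
\qquad
\lambda_{\min}(L_h)=\frac{4}{h^2}\Big(1-\cos\frac{\pi}{N+1}\Big)>0 .
\]
If \(AV=0\), this forces \(V=0\), so \(A(\chi^\ast)\) is nonsingular. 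A clean coercivity constant \(c_0=\eps\lambda_{\min}(L_h)\), independent of \(\chi^\ast\), comes out as a byproduct.

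\emph{Relaxation as a contraction.} With the activation frozen at \(\chi^\ast\), let \(U^\ast=A(\chi^\ast)^{-1}\mathbf f\) be the unique fixed stabilized solution. The post-processing step
\[
U^{(m+1)}=(1-\omega)U^{(m)}+\omega\tilde U^{(m+1)}
\]
uses \(\tilde U^{(m+1)}=U^\ast\) at every step, because the matrix no longer changes. Subtracting \(U^\ast\) gives the error recursion
\[
U^{(m+1)}-U^\ast=(1-\omega)(U^{(m)}-U^\ast).
\]
For \(0<\omega\le1\) this is a linear contraction with factor \(|1-\omega|=1-\omega<1\), in any norm. For \(\omega=1\) the fixed solution is reached in one step.

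\emph{Where care is needed.} The contraction part is elementary. The main point requiring care is the skew-symmetry of \(C_h\) on the finite Dirichlet matrix: it must hold on the truncated matrix, not only for the interior stencil. I would verify that eliminating the zero boundary values removes couplings symmetrically, so no boundary term survives in \(V^TC_hV\). I would also confirm that the averaged edge transfer keeps every weight in \([0,1]\) times a nonnegative scale, which is what makes \(S_h\) positive semidefinite for every admissible \(\chi^\ast\).
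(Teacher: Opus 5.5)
Your proposal is correct and follows essentially the same route as the paper. Both prove nonsingularity from positivity of the quadratic form $V^TA(\chi^\ast)V$: the centered convection matrix is skew-symmetric, the Dirichlet diffusion part is positive definite, and the edge correction is positive semidefinite. Both then obtain the contraction from the error recursion $U^{(m+1)}-U^\ast=(1-\omega)(U^{(m)}-U^\ast)$. Your explicit coercivity constant $\eps\lambda_{\min}(L_h)$, uniform in $\chi^\ast$, is a small quantitative sharpening of the paper's strict-positivity argument. The boundary point you flag is immediate: the finite Dirichlet convection matrix is a principal submatrix of a skew-symmetric stencil operator, hence itself skew-symmetric.
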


\begin{proof}
	For a fixed activation field, set
	\[
		A^\ast
		=
		\Kh+D_x^T W_x(\chi^\ast)D_x
		+D_y^T W_y(\chi^\ast)D_y.
	\]
	The centered convection matrix is skew-symmetric, while the Dirichlet
	diffusion matrix is symmetric positive definite. Moreover,
	$D_x^T W_x(\chi^\ast)D_x+D_y^T W_y(\chi^\ast)D_y$ is symmetric positive
	semidefinite because the weights are nonnegative. Hence, for any nonzero
	real vector $U$,
	\[
		U^T A^\ast U
		=
		U^T D_h U
		+U^T\bigl(D_x^T W_x(\chi^\ast)D_x+D_y^T W_y(\chi^\ast)D_y\bigr)U
		>0,
	\]
	where $D_h$ denotes the symmetric diffusive part of $\Kh$. Although
	$A^\ast$ is not symmetric because of the centered convective part, the
	positivity of its quadratic form is sufficient for nonsingularity.
	Therefore $A^\ast$ is nonsingular.

	Once the activation field is fixed, let
	$U^\ast=(A^\ast)^{-1}\mathbf f$. The fixed-activation solve gives
	$\tilde U^{(m+1)}=U^\ast$, hence
	\[
		U^{(m+1)}-U^\ast
		=
		(1-\omega)(U^{(m)}-U^\ast).
	\]
	Thus $U^{(m)}\to U^\ast$ linearly for $0<\omega\le1$.
\end{proof}

The proposition concerns the fixed sparse system selected by the
	monotone regularized update in the numerical experiments. For the
	regularized detector, the activation values are real numbers in
	\([0,1]\), so finite stationarity in at most \(N^2\) updates is no longer
	a mathematical consequence of binary monotonicity. The computations
	therefore use a relative activation-change tolerance, taken as
	\(10^{-8}\) in the reported tests, and then report the final
	fixed-activation solve. Thus Proposition~\ref{prop:adsc_convergence}
	applies to the exactly frozen matrix used in the final solve, while the
	nonzero final activation variation reported in the tables is only the
	stopping indicator used before this freeze. This is a fixed-activation convergence
	statement. The next subsection gives a separate consistency--stability
	estimate for the regularized fixed-activation ADSC discretization; no
	optimality of the selected activation field or closeness to the dense
	ideal rectified operator is claimed.

\subsection{Regularized fixed-activation consistency and convergence}

The previous proposition gives the well-posed fixed sparse system used by
the monotone activation algorithm for a fixed mesh. We now record the
consistency--stability theory for the regularized fixed-activation ADSC
operator. Throughout this subsection, the activation field is frozen after
being generated by an auxiliary grid sequence \(V_h\) satisfying the discrete
smoothness assumptions below. Thus the result concerns the selected
fixed-activation operator, not the fully coupled nonlinear iteration in which
the activation is generated by the unknown stabilized solution itself. The
coupled iteration is assessed numerically in Section~\ref{sec:numerics},
including the fixed-reference experiment of Section~\ref{subsec:fixed_reference_activation}.
The role of the regularized detector is precisely to provide a verifiable
mechanism for the discrete variation bound that was only assumed for an
arbitrary binary active set.

We use the standard discrete norms associated with the uniform Cartesian
mesh. For an interior grid vector \(V=(V_{ij})\),
\[
 \|V\|_{0,h}^{2}=h^2\sum_{i,j}|V_{ij}|^2,
\]
is the discrete \(L^2\)-norm, while
\[
 |V|_{1,h}^{2}
 =\|D_xV\|_{0,h}^{2}+\|D_yV\|_{0,h}^{2}
\]
is the discrete \(H^1\)-seminorm induced by the nearest-neighbor edge
differences used in the construction of the stiffness and stabilization
matrices. Thus the subscript \(0,h\) refers to the mesh-dependent
\(L^2\)-level norm, and \(1,h\) refers to the mesh-dependent first-order
seminorm. With homogeneous Dirichlet boundary values, \(|\cdot|_{1,h}\) is
a norm on the interior grid space, up to the usual discrete Poincar\'e
inequality.
The constants below are independent of \(h\), but may depend on
\(\eps\), \(\bfbeta\), the regularization parameters, and the continuous
solution.

\begin{lemma}[Discrete variation of the regularized detector]
\label{lem:regularized_detector_variation}
Assume that the input grid sequence \(V_h\) satisfies the discrete
smoothness bounds
\[
\|D^\pm_{\bfbeta}V_h\|_{\ell^\infty}\le C_V,
\qquad
| (D^\pm_{\bfbeta}V_h)_{i+1,j}-(D^\pm_{\bfbeta}V_h)_{ij}|
+
| (D^\pm_{\bfbeta}V_h)_{i,j+1}-(D^\pm_{\bfbeta}V_h)_{ij}|
\le C_V h.
\]
Assume also that \(\delta_h\ge \delta_0>0\) and that \(s_h\) is Lipschitz
with a constant bounded independently of \(h\). The constant below depends
on this uniform Lipschitz bound. These are assumptions on the auxiliary
sequence \(V_h\); the lemma does not assert that they hold when \(V_h\) is
the fully coupled ADSC solution. Then the regularized
detector defined by \eqref{eq:theta_detector}--\eqref{eq:regularized_chi}
satisfies
\[
|\chi^h_{i+1,j}-\chi^h_{ij}|
+
|\chi^h_{i,j+1}-\chi^h_{ij}|
\le C h.
\]
Consequently, the averaged edge activations \eqref{eq:edge_average_detector}
satisfy
\[
|\chi^x_{i+\frac12,j}-\chi^x_{i-\frac12,j}|
+
|\chi^y_{i,j+\frac12}-\chi^y_{i,j-\frac12}|
\le C h,
\]
and the corresponding ADSC edge coefficients satisfy
\[
|\alpha^x_{i+\frac12,j}-\alpha^x_{i-\frac12,j}|
\le C|\beta_1|h^2,\qquad
|\alpha^y_{i,j+\frac12}-\alpha^y_{i,j-\frac12}|
\le C|\beta_2|h^2.
\]
\end{lemma}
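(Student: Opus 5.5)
The plan is to write the detector as a composition \(\chi^h_{ij}=s_h(G(d^-_{ij},d^+_{ij}))\), where \(d^\pm_{ij}=(D^\pm_{\bfbeta}V_h)_{ij}\) and
\[
G(a,b)=\frac{2[-ab]_+}{a^2+b^2+\delta_h}.
\]
The claimed bound will then follow by showing that \(G\) is Lipschitz on the box \([-C_V,C_V]^2\), with a constant depending only on \(C_V\) and \(\delta_0\). Combining this with the assumed uniform Lipschitz bound \(L_s\) of \(s_h\) and the assumed \(O(h)\) increments of \(d^\pm\) gives
\[
|\chi^h_{i+1,j}-\chi^h_{ij}|\le L_s L_G\bigl(|d^-_{i+1,j}-d^-_{ij}|+|d^+_{i+1,j}-d^+_{ij}|\bigr)\le 2L_sL_GC_Vh,
\]
and the same in the \(y\)-direction. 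Summing the two directions yields the first estimate.

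The key step is the Lipschitz bound for \(G\). I would write \(G=N/Q\) with \(N(a,b)=2[-ab]_+\) and \(Q=a^2+b^2+\delta_h\ge\delta_0\).
\begin{itemize}
\item \(N\) is Lipschitz on the box with constant \(2C_V\), since \(z\mapsto[z]_+\) is 1-Lipschitz and \(ab\) has gradient bounded by \(C_V\) in each variable.
\item \(Q\) is Lipschitz with constant \(2C_V\) and satisfies \(Q\ge\delta_0\), and \(N\le 2C_V^2\).
\item The quotient rule in difference form,
\[
\frac{N_1}{Q_1}-\frac{N_2}{Q_2}=\frac{N_1-N_2}{Q_1}+N_2\,\frac{Q_2-Q_1}{Q_1Q_2},
\]
then gives \(L_G\le 2C_V/\delta_0+4C_V^3/\delta_0^2\) (in the \(\ell^1\) sense on \((a,b)\)).
\end{itemize}
This is the only point where \(\delta_h\ge\delta_0\) is used, and it is where mesh uniformity could fail: with \(\delta_h\to0\) the quotient is not Lipschitz near the origin. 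The constant also requires \(\|d^\pm\|_{\ell^\infty}\le C_V\), which is part of the hypotheses.

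The edge statement follows from the averaged transfer \eqref{eq:edge_average_detector}. We have
\[
\chi^x_{i+\frac12,j}-\chi^x_{i-\frac12,j}=\tfrac12\bigl(\chi^h_{i+1,j}-\chi^h_{ij}\bigr)+\tfrac12\bigl(\chi^h_{ij}-\chi^h_{i-1,j}\bigr),
\]
so the triangle inequality gives \(\le Ch\). The \(y\)-edges are handled identically.

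Finally, by \eqref{eq:edge_coeffs}, the \(h\)-dependent mesh constants \(\gamma_0,\gamma_1,\eta_{\Pe}\) are common to neighbouring edges and cancel in the baseline part. What remains is
\[
\alpha^x_{i+\frac12,j}-\alpha^x_{i-\frac12,j}=|\beta_1|h\,\gamma_1\eta_{\Pe}\bigl(\chi^x_{i+\frac12,j}-\chi^x_{i-\frac12,j}\bigr).
\]
Since \(\gamma_1=\kappa\gamma_0\le\kappa\gamma_{\max}\) and \(\eta_{\Pe}\le1\) are bounded independently of \(h\), this is \(\le C|\beta_1|h^2\), and similarly \(\le C|\beta_2|h^2\) for the \(y\)-coefficients. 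Near the boundary, indices referring to boundary nodes would be handled with the zero Dirichlet extension of \(V_h\). I would assume the smoothness hypothesis is understood to include those indices, so no separate case is needed.
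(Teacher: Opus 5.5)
Your proposal is correct and follows the paper's proof step by step. The steps are: Lipschitz continuity of the score map \(F(a,b)=2[-ab]_+/(a^2+b^2+\delta_h)\) on the bounded box, uniform in \(h\) because \(\delta_h\ge\delta_0\); composition with the uniformly Lipschitz \(s_h\); the averaged edge transfer; and the extra mesh factor \(h\) in \eqref{eq:edge_coeffs}. Your version differs only in making the Lipschitz constant of the quotient explicit and in stating the boundary-index convention, which the paper leaves implicit.
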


\begin{proof}
Let \(a_{ij}=(D^-_{\bfbeta}V_h)_{ij}\) and
\(b_{ij}=(D^+_{\bfbeta}V_h)_{ij}\). By assumption, \(a_{ij}\) and
\(b_{ij}\) are uniformly bounded and vary by \(O(h)\) between neighboring
nodes. The map
\[
F(a,b)=
\frac{2[-ab]_+}{a^2+b^2+\delta_h}
\]
is Lipschitz on bounded subsets of \(\mathbb R^2\) uniformly in \(h\),
because \(\delta_h\ge\delta_0>0\). Hence
\[
|\Theta_{i+1,j}(V_h)-\Theta_{ij}(V_h)|
+
|\Theta_{i,j+1}(V_h)-\Theta_{ij}(V_h)|
\le C h.
\]
Composing with the uniformly Lipschitz function \(s_h\) gives the same
bound for \(\chi^h\). The edge activations are local averages of adjacent
nodal activations, so their first differences are also \(O(h)\). Finally,
\[
\alpha^x=|\beta_1|h\bigl(\gamma_0+\gamma_1\eta_{\Pe}\chi^x\bigr),
\qquad
\alpha^y=|\beta_2|h\bigl(\gamma_0+\gamma_1\eta_{\Pe}\chi^y\bigr),
\]
with bounded dimensionless factors. Multiplication by the mesh factor
\(h\) converts the \(O(h)\) variation of \(\chi^x,\chi^y\) into the
\(O(h^2)\) variation of the edge diffusion coefficients.
\end{proof}

\paragraph{A directly verifiable smoothness case.}
If \(V_h\) is obtained by sampling a function \(v\in C^2(\overline\Omega)\),
then the discrete smoothness assumptions in
Lemma~\ref{lem:regularized_detector_variation} follow from Taylor
expansion. Thus the lemma applies directly to activation fields built from
smooth reference grid functions.

\paragraph{Warning for the fully coupled activation.}
If \(V_h\) is taken to be the computed ADSC solution itself, an additional
discrete regularity estimate would be needed. This fully coupled nonlinear
question is not used in the fixed-operator convergence theorem below. In the
numerical tests, the computed detector counts and the total-variation values
reported in Section~\ref{sec:numerics} provide diagnostic evidence that the
selected activations remain localized and do not spread globally on the tested
meshes, but they are not used as a proof of the present smoothness hypothesis.

\begin{proposition}[Consistency of the fixed regularized ADSC operator]
\label{prop:adsc_consistency_fixed}
Assume that \(u\in C^4(\overline\Omega)\), and let \(u_h\) denote its
grid sampling. Suppose that the regularized ADSC edge diffusion
coefficients satisfy
\[
0\le \alpha^x_{i+\frac12,j}\le C_\alpha|\beta_1|h,
\qquad
0\le \alpha^y_{i,j+\frac12}\le C_\alpha|\beta_2|h,
\]
and the variation bounds supplied, for instance, by
Lemma~\ref{lem:regularized_detector_variation}:
\[
|\alpha^x_{i+\frac12,j}-\alpha^x_{i-\frac12,j}|
\le C_\alpha|\beta_1|h^2,
\qquad
|\alpha^y_{i,j+\frac12}-\alpha^y_{i,j-\frac12}|
\le C_\alpha|\beta_2|h^2.
\]
Let \(f_h=(-\eps\Delta u+\bfbeta\cdot\nabla u)|_{\rm grid}\). Then
\[
\|\Kh u_h+S_h^{\rm ADSC}u_h-f_h\|_{0,h}
\le
C\Bigl[
h^2\bigl(\eps\|u\|_{C^4}+|\bfbeta|\|u\|_{C^3}\bigr)
+h|\bfbeta|\|u\|_{C^2}
\Bigr].
\]
Consequently, in the active convection-dominated regime the regularized
fixed-activation ADSC discretization is first-order consistent. If
\(\Peh\le 1\) and the activation law gives \(\gamma_0=\gamma_1=0\), then
\(S_h^{\rm ADSC}=0\) and the centered second-order truncation error is
recovered.
\end{proposition}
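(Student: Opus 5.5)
The plan is to split the residual into the centered truncation error and the ADSC contribution,
\[
\Kh u_h+S_h^{\rm ADSC}u_h-f_h
=\bigl(\Kh u_h-f_h\bigr)+S_h^{\rm ADSC}u_h ,
\]
and bound each term in \(\|\cdot\|_{0,h}\) separately. For the first term, the expansion from the proof of Lemma~\ref{lem:modified} gives nodewise bounds of \(C h^2\eps\|u\|_{C^4}\) (from \(-D^2_x\), \(-D^2_y\)) and \(C h^2|\bfbeta|\|u\|_{C^3}\) (from \(D^0_x\), \(D^0_y\)), using only \(u\in C^4\) via Taylor remainders. For nodes adjacent to \(\partial\Omega\), the stencil~\eqref{eq:stencil} uses the true boundary values \(u=0\) of the sampled function, so the same pointwise Taylor bound applies. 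Since \(\|V\|_{0,h}\le\max|V_{ij}|\) on the unit square (\(h^2N^2\le1\)), the pointwise bounds transfer directly to the discrete \(L^2\)-norm.

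For the ADSC term, work one direction at a time. Writing \(\delta^+_x u_{ij}=u_{i+1,j}-u_{ij}\), one has
\[
(D_x^TW_xD_xu_h)_{ij}=-\frac{1}{h^2}\Bigl[\alpha^x_{i+\frac12,j}\,\delta^+_x u_{ij}-\alpha^x_{i-\frac12,j}\,\delta^+_x u_{i-1,j}\Bigr].
\]
Add and subtract \(\alpha^x_{i-\frac12,j}\,\delta^+_xu_{ij}\) to obtain
\[
-\frac{1}{h^2}\Bigl[(\alpha^x_{i+\frac12,j}-\alpha^x_{i-\frac12,j})\,\delta^+_xu_{ij}
+\alpha^x_{i-\frac12,j}\bigl(u_{i+1,j}-2u_{ij}+u_{i-1,j}\bigr)\Bigr].
\]
By the variation hypothesis and \(|\delta^+_xu_{ij}|\le h\|u\|_{C^1}\), the first piece is bounded by \(C_\alpha|\beta_1|h\|u\|_{C^1}\). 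The amplitude bound \(\alpha^x\le C_\alpha|\beta_1|h\) and \(|u_{i+1,j}-2u_{ij}+u_{i-1,j}|\le h^2\|u\|_{C^2}\) bound the second piece by \(C_\alpha|\beta_1|h\|u\|_{C^2}\). The \(y\)-direction is identical with \(\beta_2\). Summing gives a nodewise bound \(Ch|\bfbeta|\|u\|_{C^2}\), and the sup-to-\(\|\cdot\|_{0,h}\) transfer concludes.

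The main obstacle is the bookkeeping at boundary-adjacent nodes. There \(D_x\) involves edges touching the boundary, whose coefficients must be defined consistently with \eqref{eq:edge_average_detector}. The variation bound is only stated across interior edges, so for the boundary edge I would either assume the coefficient bounds extend to it, with the activation at boundary nodes set to the adjacent value, or simply use the amplitude bound there directly: \(h^{-2}\cdot C|\beta_1|h\cdot h\|u\|_{C^1}=C|\beta_1|\|u\|_{C^1}\) is \(O(1)\), not \(O(h)\). The latter would be too crude pointwise. However, boundary nodes form a set of measure \(O(h)\) in \(\|\cdot\|_{0,h}\), contributing \(O(h^{1/2})\), which would still break the stated rate. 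So I would rely on the variation hypothesis holding on all edges, including boundary ones, which is what the statement implicitly assumes.

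Finally, the stated consequences follow directly. In the active regime the \(h|\bfbeta|\) term dominates, giving first-order consistency. If \(\gamma_0=\gamma_1=0\), the coefficients \eqref{eq:edge_coeffs} vanish, so \(S_h^{\rm ADSC}=0\) and only the \(O(h^2)\) centered truncation error remains.
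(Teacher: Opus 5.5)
Your proposal is correct and follows essentially the paper's proof. You separate the centered truncation error, bound the ADSC term nodewise by a discrete product-rule split into a coefficient-variation part (controlled by the $O(|\beta_r|h^2)$ variation bound) and an amplitude-times-second-difference part (controlled by the $O(|\beta_r|h)$ size bound), then pass from the maximum norm to $\|\cdot\|_{0,h}$. Your boundary-edge caveat is extra care the paper leaves implicit, and it resolves as you say: the variation hypothesis, imposed at every interior node, already involves the boundary edges $\tfrac12$ and $N+\tfrac12$.
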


\begin{proof}
The centered diffusion and convection differences are second-order
consistent for smooth functions. Hence
\[
\|\Kh u_h-f_h\|_{0,h}
\le
Ch^2\bigl(\eps\|u\|_{C^4}+|\bfbeta|\|u\|_{C^3}\bigr).
\]
It remains to estimate the ADSC term. We give the details in the
\(x\)-direction; the \(y\)-direction is identical. Set
\[
g^x_{i+\frac12,j}
=
(D_xu_h)_{i+\frac12,j}
=
\frac{u(x_{i+1},y_j)-u(x_i,y_j)}{h}.
\]
For \(u\in C^2(\overline\Omega)\), Taylor expansion gives
\[
|g^x_{i+\frac12,j}|\le C\|u\|_{C^1},
\qquad
|g^x_{i+\frac12,j}-g^x_{i-\frac12,j}|
\le Ch\|u\|_{C^2}.
\]
At an interior node, up to the harmless sign determined by the convention
for \(D_x^T\), the \(x\)-part of the ADSC correction has the conservative
flux-difference form
\[
(D_x^TW_xD_xu_h)_{ij}
=
\frac1h
\left(
\alpha^x_{i-\frac12,j}g^x_{i-\frac12,j}
-
\alpha^x_{i+\frac12,j}g^x_{i+\frac12,j}
\right).
\]
Therefore
\[
\begin{aligned}
|(D_x^TW_xD_xu_h)_{ij}|
&\le
\frac1h\alpha^x_{i+\frac12,j}
|g^x_{i+\frac12,j}-g^x_{i-\frac12,j}|\\
&\quad+
\frac1h
|\alpha^x_{i+\frac12,j}-\alpha^x_{i-\frac12,j}|
|g^x_{i-\frac12,j}|\\
&\le C|\beta_1|h\|u\|_{C^2}.
\end{aligned}
\]
The same argument in the \(y\)-direction gives
\[
|(D_y^TW_yD_yu_h)_{ij}|
\le C|\beta_2|h\|u\|_{C^2}.
\]
Thus, pointwise on the grid and hence in the discrete \(L^2\) norm,
\[
\|S_h^{\rm ADSC}u_h\|_{0,h}
\le Ch|\bfbeta|\|u\|_{C^2}.
\]
Combining this estimate with the centered truncation estimate proves the
claim.
\end{proof}

\begin{proposition}[Fixed-activation energy stability]
\label{prop:adsc_energy_stability}
For any fixed activation field and any nonnegative ADSC weights, the
stabilized bilinear form associated with
\[
A_h^{\rm ADSC}=\Kh+S_h^{\rm ADSC}
\]
satisfies
\[
(A_h^{\rm ADSC}U,U)_{0,h}
\ge \eps |U|_{1,h}^2
\qquad \forall U\in\mathbb R^{N^2}.
\]
In particular, for fixed \(\eps>0\), the fixed-activation ADSC system is
coercive in the discrete \(H^1\) seminorm.
\end{proposition}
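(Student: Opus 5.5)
I plan to split the stabilized matrix into its three structural pieces,
\[
A_h^{\rm ADSC}=D_h+C_h+S_h^{\rm ADSC},
\]
where \(D_h\) is the Dirichlet five-point diffusion matrix, \(C_h\) is the centered convection matrix, and \(S_h^{\rm ADSC}=D_x^TW_xD_x+D_y^TW_yD_y\). I will then evaluate the discrete inner product \((\cdot,\cdot)_{0,h}=h^2\sum_{i,j}(\cdot)_{ij}(\cdot)_{ij}\) term by term. This follows the nonsingularity argument of Proposition~\ref{prop:adsc_convergence}, but the aim now is a quantitative lower bound rather than mere positivity.

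\emph{Convection term.} Extending \(U\) by zero at the boundary nodes, I will show that the matrix \(C_h\) is skew-symmetric, so \((C_hU,U)_{0,h}=0\). After the summation-by-parts shift, the sum \(\frac{\beta_1}{2h}\sum_{i,j}(U_{i+1,j}-U_{i-1,j})U_{ij}\) telescopes to zero, and the \(y\)-direction is treated in the same way.

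\emph{Diffusion term.} I will write \(D_h=\eps(\tilde D_x^T\tilde D_x+\tilde D_y^T\tilde D_y)\). Here \(\tilde D_x,\tilde D_y\) are the forward edge differences acting on all edges of the grid, including the edges adjacent to the boundary, with zero boundary values built in. Summation by parts then gives
\[
(D_hU,U)_{0,h}=\eps\bigl(\|\tilde D_xU\|_{0,h}^2+\|\tilde D_yU\|_{0,h}^2\bigr)\ge\eps|U|_{1,h}^2 .
\]
This is an equality if \(|\cdot|_{1,h}\) is read as including the boundary edges, and an inequality if it counts only interior edges. I will make this norm convention explicit, since it is exactly where the nearest-neighbor definition of \(|\cdot|_{1,h}\) and the Dirichlet stencil have to match.

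\emph{Stabilization term.} By construction,
\[
(S_h^{\rm ADSC}U,U)_{0,h}=h^2\bigl[(W_xD_xU)\cdot D_xU+(W_yD_yU)\cdot D_yU\bigr]\ge0,
\]
because the diagonal weights are nonnegative. The edge set used in \(D_x,D_y\) for the correction does not matter here, because only nonnegativity is used.

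Summing the three contributions gives the inequality. Since it holds for every fixed activation field and every choice of nonnegative weights, no property of the detector is needed. The only delicate step is the bookkeeping in the boundary summation by parts and the consistency of the inner-product scaling \(h^2\) with the factor \(1/h^2\) in the stencil. I expect this to be routine once the zero-extension convention is fixed. Finally, coercivity in \(|\cdot|_{1,h}\) follows because \(|\cdot|_{1,h}\) is a norm on the space with homogeneous Dirichlet data, by the discrete Poincaré inequality noted above.
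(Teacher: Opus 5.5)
Your proposal is correct and matches the paper's proof. Skew-symmetry of the centered convection matrix removes its contribution, and the five-point Dirichlet diffusion gives exactly \(\eps|U|_{1,h}^2\) once the edge differences include the boundary edges with zero extension. The ADSC term is the nonnegative weighted sum \(\|W_x^{1/2}D_xU\|_{0,h}^2+\|W_y^{1/2}D_yU\|_{0,h}^2\).

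One small caution about your closing remark. Under your alternative interior-edges-only reading, \(|\cdot|_{1,h}\) vanishes on constant vectors and is therefore not a norm. The norm/Poincar\'e claim thus needs the zero-extension convention, which is the one the paper implicitly adopts.
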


\begin{proof}
The centered convection part is skew-symmetric under homogeneous
Dirichlet conditions, so its quadratic contribution vanishes. With the discrete norm \(\|U\|_{0,h}^2=h^2\sum_{i,j}|U_{ij}|^2\) and the
edge-difference seminorm used in this paper, the diffusive part gives
\(\eps |U|_{1,h}^{2}\). Moreover,
\[
(S_h^{\rm ADSC}U,U)_{0,h}
=
\|W_x^{1/2}D_xU\|_{0,h}^{2}
+
\|W_y^{1/2}D_yU\|_{0,h}^{2}
\ge0,
\]
because the edge weights are nonnegative. The stated estimate follows.
\end{proof}

\paragraph{No \(\eps\)-uniform stability claim.}
The coercivity constant in Proposition~\ref{prop:adsc_energy_stability}
is proportional to \(\eps\). Thus the estimate is suitable for fixed
positive diffusion but is not a robust \(\eps\)-uniform stability bound in
the singularly perturbed limit. Establishing an \(\eps\)-uniform norm,
for instance by adding a streamline or graph-diffusion contribution to the
energy norm, would require a separate analysis and is left open.

\begin{theorem}[Conditional discrete \(H^1\)-seminorm convergence of the fixed-activation regularized ADSC operator]
\label{thm:adsc_pde_convergence}
Let \(u\in C^4(\overline\Omega)\) be the solution of~\eqref{eq:strong}
and let \(U_h^{\rm ADSC}\) be the solution of a fixed regularized ADSC
linear system whose activation field has been generated from an auxiliary
grid sequence \(V_h\) satisfying Lemma~\ref{lem:regularized_detector_variation}.
Here \(V_h\) is not assumed to be the unknown computed solution; the fully
coupled case \(V_h=U_h^{\rm ADSC}\) would require an additional discrete
regularity argument and is not part of this theorem. Assume that the
resulting ADSC coefficients satisfy the size and discrete variation estimates
of Proposition~\ref{prop:adsc_consistency_fixed} uniformly in \(h\). Then
\[
 |U_h^{\rm ADSC}-u_h|_{1,h}
 \le
 C_\eps
 \Bigl[
 h^2\bigl(\eps\|u\|_{C^4}+|\bfbeta|\|u\|_{C^3}\bigr)
 +h|\bfbeta|\|u\|_{C^2}
 \Bigr].
\]
Thus \( |U_h^{\rm ADSC}-u_h|_{1,h}=O(h)\) in the active regularized
regime. If \(\Peh\le1\) and ADSC is inactive, the estimate reduces to the
corresponding centered second-order consistency regime.
\end{theorem}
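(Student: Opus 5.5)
The plan is the standard consistency-plus-stability argument (Lax-type), combining Proposition~\ref{prop:adsc_consistency_fixed} with Proposition~\ref{prop:adsc_energy_stability}. Set \(e_h=U_h^{\rm ADSC}-u_h\). Since \(u\) solves~\eqref{eq:strong}, the right-hand side of the fixed system is the grid sampling \(f_h=(-\eps\Delta u+\bfbeta\cdot\nabla u)|_{\rm grid}\). Subtracting, the error equation is
\[
A_h^{\rm ADSC}e_h = f_h-A_h^{\rm ADSC}u_h=:-\tau_h .
\]
Here \(\tau_h=\Kh u_h+S_h^{\rm ADSC}u_h-f_h\) is exactly the truncation residual controlled in Proposition~\ref{prop:adsc_consistency_fixed}. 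Because \(u=0\) on \(\partial\Omega\), the sampled vector \(u_h\) has zero boundary values, so it lies in the same interior space as \(U_h^{\rm ADSC}\). The frozen activation comes from the auxiliary sequence \(V_h\). Lemma~\ref{lem:regularized_detector_variation} then supplies the size and variation hypotheses on the \(\alpha\)'s uniformly in \(h\), and consistency gives
\[
\|\tau_h\|_{0,h}\le C\bigl[h^2(\eps\|u\|_{C^4}+|\bfbeta|\|u\|_{C^3})+h|\bfbeta|\|u\|_{C^2}\bigr].
\]

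Next I test the error equation with \(e_h\) in \((\cdot,\cdot)_{0,h}\). Proposition~\ref{prop:adsc_energy_stability} applies to any fixed activation and gives
\[
\eps|e_h|_{1,h}^2\le (A_h^{\rm ADSC}e_h,e_h)_{0,h}=-(\tau_h,e_h)_{0,h}\le \|\tau_h\|_{0,h}\|e_h\|_{0,h}.
\]
I then invoke the discrete Poincar\'e inequality \(\|e_h\|_{0,h}\le C_P|e_h|_{1,h}\) on the unit square with homogeneous Dirichlet data, with \(C_P\) independent of \(h\). This is the standard estimate mentioned after the definition of the norms. Dividing by \(|e_h|_{1,h}\), with the trivial case \(e_h=0\) handled separately, yields \(|e_h|_{1,h}\le (C_P/\eps)\|\tau_h\|_{0,h}\). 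This is the claimed bound with \(C_\eps=C\,C_P/\eps\).

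For the final remarks, the active regime gives \(O(h)\) from the \(h|\bfbeta|\|u\|_{C^2}\) term. When \(\Peh\le1\), \(\gamma_0=\gamma_1=0\), so \(S_h^{\rm ADSC}=0\), and the consistency proposition already returns the pure \(O(h^2)\) centered residual, which then propagates through the same stability step.

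The main obstacle is in the inputs, not the algebra. First, I must check that the discrete inner product used in Proposition~\ref{prop:adsc_energy_stability} matches the scaling of \(\|\cdot\|_{0,h}\) in the consistency estimate, i.e.\ that both carry the factor \(h^2\). Second, I must check that the consistency bound, stated for interior nodes, also covers nodes adjacent to the boundary, where the edge differences involve zero boundary values of \(u\). For the latter I will argue that \(u\in C^4(\overline\Omega)\) sampled including boundary nodes makes the Taylor expansions in that proof valid at every interior node. I will also take the auxiliary-activation hypothesis to deliver the \(\alpha\) bounds on boundary-adjacent edges as well, since the theorem assumes these uniformly. The dependence \(C_\eps\sim\eps^{-1}\) is unavoidable here and consistent with the remark that no \(\eps\)-uniform claim is made.
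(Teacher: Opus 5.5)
Your proposal is correct and matches the paper's proof essentially step for step. Both use the error equation $A_h^{\rm ADSC}e_h=-\tau_h$, test it with $e_h$ via Proposition~\ref{prop:adsc_energy_stability}, apply the discrete Poincar\'e inequality, cancel one factor $|e_h|_{1,h}$, and conclude with the residual bound of Proposition~\ref{prop:adsc_consistency_fixed}; your explicit constant $C_\eps=C\,C_P/\eps$ and your checks on the $h^2$-scaling of the inner product and on boundary-adjacent nodes are details the paper leaves implicit.
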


\begin{proof}
Set \(e_h=U_h^{\rm ADSC}-u_h\) and
\[
\tau_h=\Kh u_h+S_h^{\rm ADSC}u_h-f_h.
\]
Subtracting the fixed-activation ADSC system from the equation obtained
by applying the same stabilized operator to \(u_h\) gives
\[
A_h^{\rm ADSC}e_h=-\tau_h.
\]
Testing with \(e_h\) and using
Proposition~\ref{prop:adsc_energy_stability} yields
\[
\eps |e_h|_{1,h}^{2}\le |(\tau_h,e_h)_{0,h}|.
\]
By the discrete Poincar\'e inequality,
\[
\|e_h\|_{0,h}\le C_P |e_h|_{1,h},
\]
hence
\[
\eps |e_h|_{1,h}^{2}
\le C_P\|\tau_h\|_{0,h}|e_h|_{1,h}.
\]
After cancellation,
Proposition~\ref{prop:adsc_consistency_fixed} gives the announced bound.
\end{proof}

The estimates above show why the regularized detector is analytically
useful: it gives a concrete route to the coefficient variation bound
needed in the consistency and convergence argument. The proven order is
first order in the discrete \(H^1\) seminorm in the active regularized
regime, due to the leading artificial-diffusion contribution
\(O(h|\bfbeta|)\). When the activation vanishes, the artificial-diffusion contribution vanishes
and the underlying centered second-order truncation regime is recovered at the
consistency level. In the numerical section
below, unless explicitly stated otherwise, ADSC denotes the regularized
activation version with averaged edge transfer. The sharp binary detector
is retained only as the limiting selective active-set realization of the
same directional monotonicity principle. No discrete Aubin--Nitsche
argument is used here; the reported \(L^2\) rates in Section~\ref{sec:numerics}
are therefore numerical diagnostics rather than consequences of
Theorem~\ref{thm:adsc_pde_convergence}.

\begin{proposition}[Existence of a regularized fully coupled ADSC solution]
\label{prop:adsc_coupled_existence}
Fix a mesh size \(h>0\). Assume \(\eps>0\), homogeneous Dirichlet boundary
conditions, nonnegative ADSC edge weights of the form~\eqref{eq:edge_coeffs},
and the regularized detector~\eqref{eq:theta_detector}--\eqref{eq:regularized_chi}
with \(\delta_h>0\) and a continuous activation function
\(s_h:[0,\infty)\to[0,1]\). Then the fully coupled regularized ADSC problem
\begin{equation}\label{eq:fully_coupled_adsc_problem}
\bigl(\Kh+S_h^{\rm ADSC}(\chi(U_h))\bigr)U_h=f_h
\end{equation}
admits at least one solution \(U_h\in V_h\).
\end{proposition}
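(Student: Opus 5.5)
The plan is a Brouwer fixed-point argument on a ball of \(\R^{N^2}\), using the fixed-activation theory already in hand. For fixed \(h\) and any grid vector \(V\in\R^{N^2}\), define the activation field \(\chi(V)\) by \eqref{eq:theta_detector}--\eqref{eq:regularized_chi}, the edge activations by \eqref{eq:edge_average_detector}, and the weights by \eqref{eq:edge_coeffs}. Then set
\[
T(V)=\bigl(\Kh+S_h^{\rm ADSC}(\chi(V))\bigr)^{-1}f_h .
\]
By Proposition~\ref{prop:adsc_convergence}, the inverse exists for every \(V\), because \(0\le\chi(V)\le1\) componentwise. A fixed point \(U_h=T(U_h)\) is exactly a solution of \eqref{eq:fully_coupled_adsc_problem}.

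\textbf{Step 1 (a priori bound uniform in the activation).} By Proposition~\ref{prop:adsc_energy_stability}, for \(W=T(V)\),
\[
\eps|W|_{1,h}^2\le (A_h W,W)_{0,h}=(f_h,W)_{0,h}\le \|f_h\|_{0,h}\,C_P|W|_{1,h}.
\]
Hence \(|W|_{1,h}\le C_P\|f_h\|_{0,h}/\eps=:R\), independently of \(V\). So \(T\) maps all of \(\R^{N^2}\), and in particular the closed ball \(B_R=\{|V|_{1,h}\le R\}\), into \(B_R\). The ball is convex and compact in finite dimensions, and \(|\cdot|_{1,h}\) is a norm there under Dirichlet conditions.

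\textbf{Step 2 (continuity of \(T\)).} This is where care is needed.

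First, the map \(V\mapsto\Theta_{ij}(V)\) is continuous. The numerator \(2[-ab]_+\) is continuous in \((a,b)\), where \(a,b\) are the linear functions \(D^\mp_{\bfbeta}V\) at node \((i,j)\). The denominator is at least \(\delta_h>0\), which is precisely why the regularization matters.

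Next, composing with the continuous \(s_h\) makes \(V\mapsto\chi^h(V)\) continuous. The edge averages and the affine formula \eqref{eq:edge_coeffs} then make the weights \(W_x(V),W_y(V)\) continuous. As a result, the matrix \(A(V)=\Kh+S_h^{\rm ADSC}(\chi(V))\) depends continuously on \(V\).

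Finally, matrix inversion is continuous on the open set of nonsingular matrices, and every \(A(V)\) is nonsingular. Therefore \(V\mapsto A(V)^{-1}f_h\) is continuous.

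\textbf{Step 3.} Brouwer's fixed-point theorem applied to \(T:B_R\to B_R\) yields a fixed point \(U_h\), which proves the proposition.

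The main obstacle is only the continuity in Step 2: it fails for the sharp indicator \(\chi^{\rm sh}\), which is discontinuous. That is why the hypotheses require \(\delta_h>0\) and a continuous \(s_h\). No uniqueness follows from this argument, consistent with the paper's scope.
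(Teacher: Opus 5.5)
Your proposal is correct and follows essentially the same route as the paper. You define the frozen-activation solution map and use the activation-independent energy bound $|T(V)|_{1,h}\le C_P\|f_h\|_{0,h}/\eps$ to obtain a self-map of a compact convex ball, get continuity from $\delta_h>0$, the continuity of $s_h$ and the averaged edge transfer, and conclude by Brouwer. Your explicit appeal to continuity of matrix inversion on the open set of nonsingular matrices is just a slightly more precise version of the paper's remark that uniform coercivity prevents singular loss.
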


\begin{proof}
The proof is finite-dimensional. For each \(V\in V_h\), freeze the
activation field generated by the regularized detector and define \(T_h(V)\)
as the unique solution of
\[
\bigl(\Kh+S_h^{\rm ADSC}(\chi(V))\bigr)T_h(V)=f_h.
\]
This fixed-activation system is nonsingular by
Proposition~\ref{prop:adsc_energy_stability}: the centered convective part
is skew-symmetric, the diffusion part is coercive for \(\eps>0\), and the
ADSC correction is positive semidefinite because all edge weights are
nonnegative. Therefore \(T_h\) is well defined.

Moreover, testing the fixed-activation equation with \(T_h(V)\) gives,
uniformly with respect to \(V\),
\[
\eps |T_h(V)|_{1,h}^2
\le (f_h,T_h(V))_{0,h}
\le C_P\|f_h\|_{0,h}|T_h(V)|_{1,h}.
\]
Thus \(T_h\) maps the closed convex ball
\[
B_h=\{W\in V_h:\ |W|_{1,h}\le C_P\|f_h\|_{0,h}/\eps\}
\]
into itself. Since \(V_h\) is finite-dimensional, \(B_h\) is compact and
convex.

It remains to note that \(T_h\) is continuous on \(B_h\). Since
\(\delta_h>0\), the map \(V\mapsto\Theta(V)\) is continuous in the finite
dimensional space \(V_h\). The activation function \(s_h\) and the averaged
edge transfer are continuous, hence \(V\mapsto\chi(V)\) and the associated
matrix \(\Kh+S_h^{\rm ADSC}(\chi(V))\) are continuous. Uniform coercivity
prevents singular loss along this map, so the solution operator
\(V\mapsto T_h(V)\) is continuous. Brouwer's fixed point theorem applied to
\(T_h:B_h\to B_h\) gives a point \(U_h=T_h(U_h)\), which is a solution of
\eqref{eq:fully_coupled_adsc_problem}.
\end{proof}

\paragraph{Uniqueness of coupled solutions for fixed \(h\).}
Proposition~\ref{prop:adsc_coupled_existence} gives existence, but uniqueness
of a fully coupled solution is not established. A sufficient contraction-type
condition would be that the Lipschitz constant of the map
\(V\mapsto S_h^{\rm ADSC}(\chi(V))\) in the discrete \(\|\cdot\|_{0,h}\) norm,
combined with the induced solution-map bound, be smaller than the coercivity
constant supplied by the diffusive part. The numerical experiments do not
indicate multiplicity, but no analytical bound of this type is derived here.

\begin{proposition}[Qualitative convergence of regularized fully coupled ADSC solutions]
\label{prop:adsc_coupled_qualitative_convergence}
Assume that \(\eps>0\), \(\bfbeta\) is constant, and \(f_h\) is a consistent
grid representation of a fixed \(f\in L^2(\Omega)\): \(f_h\) is uniformly
bounded in \(\ell_h^2\) and
\[
(f_h,\varphi_h)_{0,h}\to (f,\varphi)_{L^2(\Omega)}
\qquad\text{for every }\varphi\in C_c^\infty(\Omega),
\]
where \(\varphi_h\) is the grid sampling of \(\varphi\). Assume also that the
regularized ADSC weights are those defined in~\eqref{eq:edge_coeffs}, with
\(0\le \chi^x,\chi^y\le 1\) and a bounded P\'eclet transfer factor. In
particular, there exists a constant
\(C_\alpha\), independent of \(h\) and of the grid function \(U\), such that
\[
0\le \alpha^x_{i+\frac12,j}(U)\le C_\alpha h,
\qquad
0\le \alpha^y_{i,j+\frac12}(U)\le C_\alpha h.
\]
For each \(h\), let \(U_h^c\) be one of the fully coupled regularized ADSC
solutions whose existence is given by
Proposition~\ref{prop:adsc_coupled_existence}, namely
\[
\bigl(\Kh+S_h^{\rm ADSC}(\chi(U_h^c))\bigr)U_h^c=f_h.
\]
Let \(\mathcal I_hU_h^c\) denote the standard continuous piecewise affine
reconstruction on the Cartesian triangulation, obtained by extending the
nodal values on each rectangular cell after splitting it into two triangles
along the diagonal from \((x_i,y_j)\) to \((x_{i+1},y_{j+1})\).
Then \(\{\mathcal I_hU_h^c\}_h\) is bounded uniformly in \(H_0^1(\Omega)\).
Consequently, every sequence \(h_n\to0\) contains a subsequence, not
relabeled, such that
\[
\mathcal I_{h_n}U_{h_n}^c\rightharpoonup u
\quad\hbox{weakly in }H_0^1(\Omega),
\qquad
\mathcal I_{h_n}U_{h_n}^c\to u
\quad\hbox{strongly in }L^2(\Omega).
\]
Moreover, the limit \(u\) is the unique weak solution of the continuous
convection--diffusion problem~\eqref{eq:strong}. Consequently, any sequence
of selected coupled solutions converges to \(u\) in \(L^2(\Omega)\), up to
the chosen reconstruction.
\end{proposition}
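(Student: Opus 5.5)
The plan is the standard compactness argument: a uniform $H^1_0$ bound, extraction of a subsequence, passage to the limit in the discrete equation tested against sampled test functions, and identification of the limit by uniqueness.

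\textbf{Step 1: uniform bound.} Testing the coupled equation with $U_h^c$ and using Proposition~\ref{prop:adsc_energy_stability} gives $\eps|U_h^c|_{1,h}^2\le (f_h,U_h^c)_{0,h}$. Since the coupled solution is a fixed point of a fixed-activation system, this inequality holds regardless of the activation. The discrete Poincar\'e inequality then yields $|U_h^c|_{1,h}\le C_P\|f_h\|_{0,h}/\eps$, which is uniform in $h$. For the affine reconstruction on the chosen diagonal triangulation, $\|\nabla\mathcal I_hV\|_{L^2}^2$ equals exactly $\|D_xV\|_{0,h}^2+\|D_yV\|_{0,h}^2$ summed over boundary-extended edges, each triangle carrying one $x$-edge and one $y$-edge difference. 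Hence $\{\mathcal I_hU_h^c\}$ is bounded in $H_0^1(\Omega)$. Rellich's theorem and weak compactness then give a subsequence converging weakly in $H_0^1$ and strongly in $L^2$ to some $u$.

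\textbf{Step 2: passage to the limit.} Fix $\varphi\in C_c^\infty(\Omega)$ with grid sampling $\varphi_h$, and test the coupled equation with $\varphi_h$. Take the terms in order.

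\emph{Diffusion.} $\eps\,(D_xU_h,D_x\varphi_h)+\eps\,(D_yU_h,D_y\varphi_h)$ is compared with $\eps\int\nabla\mathcal I_hU_h\cdot\nabla\mathcal I_h\varphi_h$; the two coincide by the triangle–edge identity above. Since $\mathcal I_h\varphi_h\to\varphi$ strongly in $H^1$, weak times strong convergence passes to the limit.

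\emph{Convection.} By discrete skew-symmetry, move the centered difference onto $\varphi_h$, obtaining $-h^2\sum U_{ij}(\bfbeta\cdot D^0\varphi_h)_{ij}$. Here $D^0\varphi_h$ converges uniformly to $\bfbeta\cdot\nabla\varphi$. The discrete pairing differs from $\int\mathcal I_hU_h\,g$ by $O(h)\,|U_h|_{1,h}$ for smooth $g$, so strong $L^2$ convergence yields $-\int u\,\bfbeta\cdot\nabla\varphi=\int(\bfbeta\cdot\nabla u)\varphi$.

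\emph{Stabilization.} By Cauchy--Schwarz,
\[
|(S_hU_h,\varphi_h)|\le \max\alpha\,\bigl(\|D_xU_h\|\,\|D_x\varphi_h\|+\|D_yU_h\|\,\|D_y\varphi_h\|\bigr)\le C_\alpha h\,|U_h|_{1,h}\,|\varphi_h|_{1,h}\to0.
\]
This uses only the size bound $\alpha\le C_\alpha h$. No variation estimate is needed, which is why the argument works for the coupled activation.

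\emph{Right-hand side.} $(f_h,\varphi_h)\to(f,\varphi)$ by hypothesis.

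\textbf{Step 3: identification and conclusion.} Step 2 gives $a(u,\varphi)=\ell(\varphi)$ for all $\varphi\in C_c^\infty(\Omega)$. By density and continuity of $a(u,\cdot)$ on $H_0^1$, this holds for all of $H_0^1$. The Lax--Milgram theorem then gives uniqueness of the weak solution, since skew-symmetry makes $a$ coercive with constant $\eps$. Because every subsequence has a further subsequence converging to the same $u$, the whole sequence converges in $L^2$, and weakly in $H^1_0$.

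\textbf{Main obstacle.} I expect the main difficulty to be the careful comparison of discrete inner products with integrals of reconstructions in the convection and diffusion terms. The cleanest route is to prove, for $V$ vanishing on the boundary and smooth $g$, the estimate
\[
\bigl|h^2\textstyle\sum V_{ij}g(x_{ij})-\int\mathcal I_hV\,g\bigr|\le Ch\|g\|_{C^1}\bigl(|V|_{1,h}+\|V\|_{0,h}\bigr),
\]
by comparing $\mathcal I_hV$ with the piecewise-constant lumped function on each cell. I would establish this estimate first.
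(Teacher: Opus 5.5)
Your proposal is correct and follows the paper's proof essentially step for step. The steps are the energy bound from testing with $U_h^c$, Rellich compactness, term-by-term passage to the limit (summation by parts for convection, Cauchy--Schwarz with $\alpha\le C_\alpha h$ for the ADSC term), and identification via uniqueness of the coercive continuous problem. Your exact edge identity for $\|\nabla\mathcal I_hV\|_{L^2}^2$ on the diagonal triangulation and your lumping estimate for the convective pairing make precise two steps that the paper only asserts.
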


\begin{proof}
Testing the coupled equation with \(U_h^c\) gives
\[
\bigl(\Kh U_h^c,U_h^c\bigr)_{0,h}
+
\bigl(S_h^{\rm ADSC}(\chi(U_h^c))U_h^c,U_h^c\bigr)_{0,h}
=
(f_h,U_h^c)_{0,h}.
\]
The centered convection part of \(\Kh\) is skew-symmetric under the
homogeneous Dirichlet boundary condition, the diffusion part gives
\(\eps |U_h^c|_{1,h}^2\), and the ADSC contribution is nonnegative because
the edge weights are nonnegative. Hence
\[
\eps |U_h^c|_{1,h}^2
\le
(f_h,U_h^c)_{0,h}
\le
\|f_h\|_{0,h}\,\|U_h^c\|_{0,h}
\le
C_P\|f_h\|_{0,h}|U_h^c|_{1,h}.
\]
Thus \(|U_h^c|_{1,h}\le C\), uniformly in \(h\). By the equivalence between
the discrete \(H^1\) seminorm and the \(H^1\)-seminorm of the piecewise
affine reconstruction, there exists a mesh-independent constant \(C\) such
that
\[
\|\mathcal I_h U_h^c\|_{H_0^1(\Omega)}\le C |U_h^c|_{1,h}\le C.
\]
Rellich--Kondrachov compactness therefore yields a subsequence converging
weakly in \(H_0^1(\Omega)\) and strongly in \(L^2(\Omega)\).
The Rellich--Kondrachov theorem thus gives strong \(L^2\)-convergence of a
subsequence to some limit \(u\in H_0^1(\Omega)\); this strong convergence is
then used to pass to the limit in the convective term before \(u\) is
identified as the unique weak solution.

It remains to identify the limit. Let \(\varphi\in C_c^\infty(\Omega)\) and
let \(\varphi_h\) be its grid sampling. The diffusive part converges by the
weak convergence of the reconstructed discrete gradients and the consistency
of the centered differences applied to the smooth test function. More
precisely, the reconstructed gradients of \(U_h^c\) are bounded in
\(L^2(\Omega)^2\), converge weakly to \(\nabla u\), and the sampled gradients
of \(\varphi\) converge strongly to \(\nabla\varphi\).

For the convective part, use the discrete summation-by-parts identity for
the centered first difference. In the \(x\)-direction, for instance,
\[
\left(
\frac{\beta_1}{2h}(U_{i+1,j}^c-U_{i-1,j}^c),\varphi_{ij}
\right)_{0,h}
=
-
\left(
U_{ij}^c,
\frac{\beta_1}{2h}(\varphi_{i+1,j}-\varphi_{i-1,j})
\right)_{0,h},
\]
where the boundary contribution vanishes because of the homogeneous
Dirichlet boundary condition and because \(\varphi\) is compactly supported.
The same identity holds in the \(y\)-direction. Since
\[
\frac{\varphi_{i+1,j}-\varphi_{i-1,j}}{2h}
=\partial_x\varphi(x_i,y_j)+O(h^2),
\qquad
\frac{\varphi_{i,j+1}-\varphi_{i,j-1}}{2h}
=\partial_y\varphi(x_i,y_j)+O(h^2),
\]
uniformly on the support of \(\varphi\), the strong \(L^2\)-convergence of
\(\mathcal I_hU_h^c\) gives
\[
-\int_\Omega u\,\bfbeta\cdot\nabla\varphi\,dx.
\]
Because \(\nabla\cdot\bfbeta=0\), \(u\in H_0^1(\Omega)\), and
\(\varphi\in C_c^\infty(\Omega)\), this expression is equal to
\[
\int_\Omega (\bfbeta\cdot\nabla u)\,\varphi\,dx,
\]
which is the continuous convective contribution in the weak formulation.

It remains to show that the ADSC correction vanishes in the limit. From
\eqref{eq:edge_coeffs}, since \(0\le \chi^x,\chi^y\le1\) and the transfer
factor is bounded, for instance \(0\le \eta_{\rm Pe}\le1\),
\[
\alpha^x_{i+\frac12,j}(U)
=|\beta_1|h\bigl(\gamma_0+\gamma_1\eta_{\rm Pe}\chi^x_{i+\frac12,j}(U)\bigr)
\le |\beta_1|h(\gamma_0+\gamma_1),
\]
and similarly
\(\alpha^y_{i,j+\frac12}(U)\le |\beta_2|h(\gamma_0+\gamma_1)\). Hence the
constant \(C_\alpha\) is independent of \(U\) and \(h\). With the discrete
edge inner product associated with \(\|\cdot\|_{0,h}\), the edge
representation reads
\[
\bigl(S_h^{\rm ADSC}(\chi(U))U,\varphi\bigr)_{0,h}
=
\sum_{r=x,y}(W_r(\chi(U))D_rU,D_r\varphi)_{0,h}.
\]
Since the diagonal entries of \(W_r\) are the edge coefficients
\(\alpha^r\), the bound \(0\le\alpha^r\le C_\alpha h\) from
\eqref{eq:edge_coeffs} gives
\[
\|W_r^{1/2}D_rZ_h\|_{0,h}
\le C h^{1/2}|Z_h|_{1,h},
\qquad Z_h\in V_h .
\]
Applying this bound once with \(Z_h=U_h^c\) and once with
\(Z_h=\varphi_h\) supplies the extra factor \(h\). Thus, by
Cauchy--Schwarz,
\[
\begin{aligned}
\left|
\bigl(S_h^{\rm ADSC}(\chi(U_h^c))U_h^c,\varphi_h\bigr)_{0,h}
\right|
&\le
\sum_{r=x,y}
\|W_r(\chi(U_h^c))^{1/2}D_rU_h^c\|_{0,h}
\|W_r(\chi(U_h^c))^{1/2}D_r\varphi_h\|_{0,h}  \\
&\le
C h\, |U_h^c|_{1,h}\,|\varphi_h|_{1,h}
\longrightarrow 0.
\end{aligned}
\]
The right-hand side converges by the assumed consistency of \(f_h\):
\[
(f_h,\varphi_h)_{0,h}\to (f,\varphi)_{L^2(\Omega)}.
\]
Therefore the limit satisfies the weak formulation of
\eqref{eq:strong}. The continuous weak problem is coercive for fixed
\(\eps>0\): the diffusion term controls \(\|\nabla u\|_{L^2}\), while the
constant-coefficient transport contribution is skew-symmetric on
\(H_0^1(\Omega)\). Thus the continuous weak solution is unique. Consequently,
all convergent subsequences have the same limit, and the whole reconstructed
family converges to \(u\) in \(L^2(\Omega)\).
\end{proof}

\paragraph{Scope of the coupled compactness statement.}
Propositions~\ref{prop:adsc_coupled_existence} and~\ref{prop:adsc_coupled_qualitative_convergence} provide an existence and
compactness statement for the regularized fully coupled ADSC formulation.
The existence argument uses only the continuity of the regularized detector
for \(\delta_h>0\), the positivity of the ADSC correction, and the coercivity
of the diffusive part for fixed \(\eps>0\). The convergence statement is
qualitative: it states that the family supplied by the preceding energy bound
admits subsequences converging to the weak solution of the continuous
convection--diffusion problem, and hence that any sequence of selected coupled solutions converges in
\(L^2\), since the continuous weak solution is unique in the present setting. It does not prove
uniqueness of the nonlinear discrete solution for fixed \(h\), convergence
of the activation update algorithm used in the computations, or an
energy-norm convergence rate. Such stronger conclusions would require an
additional stability estimate for the activation map \(U\mapsto\chi(U)\),
which is not assumed here.

\subsection{Modal interpretation}

For a fixed detector, ADSC shifts the modal footprint toward larger
effective real parts,
\[
	\lhpq
	\longmapsto
	\lhpq+\delta^{\mathrm{ADSC}}_{pq},
	\qquad
	\delta^{\mathrm{ADSC}}_{pq}\ge 0.
\]
ADSC is not an exact realization of $\Lstar$ but a sparse directional
surrogate preserving two key qualitative effects of the rectified
reference operator: a dissipative shift of the modal footprint, and a
directional selection of the regions where reinforcement is activated.

\section{Numerical experiments}
\label{sec:numerics}

\subsection{Setup}
\label{subsec:numerical_setup}

All experiments solve~\eqref{eq:strong} on $\Omega=(0,1)^2$ with
homogeneous Dirichlet conditions. The LFA symbol is used only to design and
diagnose interior dissipative mechanisms; all errors, extrema violations, and
reference comparisons below are computed for the finite Dirichlet problems.
This separation is intentional: the modal quantities explain the interior
mechanism, while the numerical tests assess its effect in the bounded-domain
Dirichlet setting. The numerical experiments use the coupled regularized
activation generated from the computed ADSC solution. They should therefore be
read as numerical evidence for the practical ADSC iteration, not as a direct
verification of Theorem~\ref{thm:adsc_pde_convergence}, which concerns the
fixed regularized operator selected from an auxiliary activation field
satisfying the stated discrete regularity assumptions. Moreover, the
computations use \(\delta_h=10^{-12}\), a sharp-regularized detector regime.
The mesh-uniform Lipschitz constants required in
Lemma~\ref{lem:regularized_detector_variation} are therefore not verified by
these runs; the numerical results illustrate the practical sharp-detector
limit rather than the mesh-uniform analytical regime of the lemma.

We compare seven methods: centered Galerkin, coordinate upwinding,
SUPG~\citep{Brooks1982}, CIP-type, LPS-type, AFC-inspired, and ADSC. Unless
explicitly stated otherwise, ADSC denotes the regularized detector version
defined in Section~\ref{subsec:regularized_detector}, with averaged edge
transfer as in~\eqref{eq:edge_average_detector}. The CIP-type, LPS-type,
and AFC-inspired entries are structured-grid representatives of the
corresponding stabilization mechanisms. They are included to compare
dissipative behavior, not to reproduce full finite-element production
implementations. The comparator methods use fixed standard parameter
choices. The values \(\gamma_{\rm CIP}=0.030\), \(\gamma_{\rm LPS}=1\),
and \(\theta_{\rm AFC}=1\) are selected once on the main benchmark to give
comparable dissipative strength, measured operationally by a reduction of
the same common-set modal indicator \(\bar\rho_{\mathrm{stab}}\) without
collapsing all methods to full upwind-like damping. They are not optimized
for error and are not refitted across tests. The
tables therefore compare representative dissipative footprints, not fully
optimized versions of every stabilization family. They should not be read
as a performance ranking against calibrated CIP, LPS, or AFC codes. Claims
about method-level performance are restricted to the fully specified
Cartesian methods: Galerkin, coordinate upwinding, SUPG, and ADSC. The
CIP-, LPS-, and AFC-inspired entries are controlled same-grid diagnostics of
characteristic dissipative mechanisms.

For reproducibility, the structured-grid representatives are defined as
follows. Let \(L_{xx}\), \(L_{yy}\) denote the centered second-difference
matrices and \(D_\beta=\beta_1G_x+\beta_2G_y\) the centered directional
derivative matrix. The CIP-type representative is
\[
	A_{\rm CIP}=A_{\rm Gal}
	+\gamma_{\rm CIP}|\bfbeta|h^3
	\bigl(L_{xx}^TL_{xx}+L_{yy}^TL_{yy}\bigr),
	\qquad \gamma_{\rm CIP}=0.030.
\]
For LPS-type, set \(P=T\otimes T\), where
\(T=\operatorname{tridiag}(1/4,1/2,1/4)\), \(H=I-P\), and
\(\tau=h/(2|\bfbeta|)\). The reported representative is
\[
	A_{\rm LPS}=A_{\rm Gal}
	+\gamma_{\rm LPS}\tau(HD_\beta)^T(HD_\beta),
	\qquad
	F_{\rm LPS}=F+\gamma_{\rm LPS}\tau(HD_\beta)^T(HF),
	\qquad
	\gamma_{\rm LPS}=1.
\]
The AFC-inspired edge-diffusion comparator uses the same directional detector only to
define a reproducible nonlinear edge-diffusion comparator:
\[
	A_{\rm AFC}=A_{\rm Gal}+D_x^TW_x^{\rm AFC}D_x
	+D_y^TW_y^{\rm AFC}D_y,\qquad
	\alpha_x^{\rm AFC}=\theta_{\rm AFC}\frac{|\beta_1|h}{2}\chi^x,
	\quad
	\alpha_y^{\rm AFC}=\theta_{\rm AFC}\frac{|\beta_2|h}{2}\chi^y,
\]
with \(\theta_{\rm AFC}=1\) and 80 limiting iterations. These
representatives are not tuned to provide production-level implementations
of CIP, LPS, or AFC methods. In particular, the AFC-inspired entry is not a
flux-corrected transport limiter; it is a detector-driven, positive
semidefinite edge-diffusion comparator included only to compare
characteristic dissipative behaviors within the same structured-grid
modal framework.
In SUPG computations, the linear system includes both the
streamline operator correction $\tau D_{\bfbeta}^{T} D_{\bfbeta}$ and the
right-hand side correction $\tau D_{\bfbeta}^{T}\mathbf f$.

\paragraph{Error and diagnostic quantities.}
Errors $\norm{e}_{L^2}$ and $\norm{e}_{L^\infty}$ are computed with
respect to a fine-grid reference solution (interpolated onto the coarse
grid). The discrete total variation used in the tables is
\begin{equation}\label{eq:discrete_tv}
\mathrm{TV}(U)=\sum_{i,j}\left(|U_{i+1,j}-U_{ij}|+|U_{i,j+1}-U_{ij}|\right).
\end{equation}
The quantity in~\eqref{eq:discrete_tv} is used only as a reported diagnostic, not as a stability norm. The main amplitude-based oscillation diagnostic is the extrema
violation
\[
	E_{\mathrm{ext}}
	=
	\max(0,\min U_{\rm ref}-\min U)
	+
	\max(0,\max U-\max U_{\rm ref}).
\]
The first term measures undershoot below the reference minimum, and the
second measures overshoot above the reference maximum. When an exact solution
is available, the reference extrema are the exact extrema.
When a directional detector count is reported, it counts sign changes
of directional increments above a shared absolute amplitude threshold.
This count is denoted by ``Det.'' and is used only as a local
non-monotonicity diagnostic along $\bfbeta$, not as a universal
oscillation norm. For a stabilized method, the reported modal indicator is
computed as
\[
	\rho^{\mathrm{stab}}_{pq}
	=
	\frac{|b_{pq}|}{a_{pq}+\delta^{\mathrm{stab}}_{pq}},
	\qquad
	\delta^{\mathrm{stab}}_{pq}
	=
	\frac{\langle S_h^{\mathrm{stab}}\phi_{pq},\phi_{pq}\rangle}
	{\|\phi_{pq}\|^2}.
\]
The reported mean value is taken over the Galerkin-dominant modal set
$\Icd$, so that all methods are compared on the same set of convectively
dominant modes. In table headings, \(\bar\rho_{\mathrm{stab}}\) denotes this
common-set diagnostic; for Galerkin it coincides with
\(\bar\rho_{\mathrm{Gal}}\), while for stabilized methods it is evaluated
from the final selected operator. For nonlinear or adaptive corrections such
as AFC-inspired and ADSC, $\delta^{\mathrm{stab}}_{pq}$ is evaluated from the
final stabilized operator selected by the nonlinear or activation procedure.
This modal-indicator value is an interior-symbol diagnostic of modal damping, not
an error estimator and not a replacement for the \(L^2\), \(L^\infty\),
extrema, and total-variation diagnostics. The tables below therefore do
not assume a monotone relation between smaller \(\bar\rho_{\mathrm{stab}}\)
and smaller solution error.

\paragraph{Benchmark scope.}
The numerical section is designed to test the modal mechanism under
controlled conditions: a one-dimensional reduction, a localized
two-dimensional source, mesh refinement, manufactured exact solutions in
the active and inactive ADSC regimes, parameter sensitivity, right-hand
side variation, and convection-direction variation. To complement these
controlled tests, Sections~\ref{subsec:nist_uniform}--\ref{subsec:nist_shishkin}
add a NIST-type exponential boundary-layer benchmark with exact data,
motivated by the NIST adaptive-test collection~\citep{Mitchell2010} and by
standard layer-adapted mesh methodology~\citep{Roos2008}. This directly
addresses the sharp-gradient objection on both a uniform Cartesian mesh and
a tensor-product Shishkin mesh. More extensive benchmark suites with
internal layers, fully anisotropic meshes, and non-Cartesian geometry
remain outside the proof framework of this paper. In particular, no calibrated
AFC implementation and no oblique internal-layer benchmark are used as
performance claims in this version; these are listed in the conclusion as
priority extensions for a broader benchmark campaign.

\paragraph{Numerical roadmap.}
Table~\ref{tab:numerical_roadmap} summarizes how the numerical section is
used. The goal is not to rank all stabilized methods uniformly, but to check
whether the regularized ADSC construction gives the effects predicted by the
modal design: extrema control, reduction of the interior modal indicator,
and the expected active/inactive consistency behavior.

\begin{table}[htbp]
\centering
\small
\caption{Numerical roadmap and interpretation of the reported tests.}
\label{tab:numerical_roadmap}
\begin{tabular}{L{0.24\textwidth}L{0.34\textwidth}L{0.34\textwidth}}
\toprule
Test block & What is checked & Interpretation \\
\midrule
Main Gaussian test & Extrema violation, detector count, and mean modal indicator & Tests the modal-directional mechanism on a bounded Dirichlet problem. \\
Fixed-reference activation & ADSC with an activation generated from the fine reference and then frozen & Numerically illustrates the fixed-activation framework used in Theorem~\ref{thm:adsc_pde_convergence}, without claiming an energy-norm rate or algorithmic convergence for the fully coupled activation. \\
Mesh refinement & Observed error decrease and inter-level slopes & Reports monotone error decrease while the active correction remains first-order dominated. \\
Manufactured active/inactive tests & Exact-solution rates & Checks the active-regime first-order behavior and recovery of the centered second-order regime when ADSC is inactive. \\
NIST-inspired exponential boundary-layer tests and small-diffusion sweep & Robustness on exact exponential layers & Probes under-resolved layers and varying diffusion; results are robustness diagnostics, not \(\eps\)-uniform stability claims. \\
Timing, sensitivity, and few-shot activation & Cost, parameter dependence, and reduced activation updates & Reports the computational price of the reference activation and indicates that a few-shot variant preserves the main diagnostics at much lower cost. \\
\bottomrule
\end{tabular}
\end{table}

\paragraph{Fine-grid reference.}
No closed-form analytical solution is available for the localized
Gaussian tests used in the main comparison.
For the main test, the reference solution is computed at
$N_{\mathrm{ref}}=264$ ($\Peh^{\mathrm{ref}}<1$); for
\(\eps=2\times10^{-3}\) and \(|\bfbeta|=1\), this gives a
diffusion-resolved centered reference. A check at
$N_{\mathrm{ref}}=360$ gives reference-to-reference differences below
$10^{-5}$ in $L^2$ and $5.5\times 10^{-5}$ in $L^\infty$; the resulting
changes in reported coarse-grid $L^2$ errors are below $2.5\times 10^{-6}$
for all methods. Both reference systems are solved with the same centered
Galerkin finite-difference operator on the refined grid using MATLAB's
sparse direct linear solver.

\subsection{Reduced one-dimensional benchmark}

Parameters: $\eps=10^{-3}$, $\beta=1$, $N_e=80$, $\Peh=6.25$,
Gaussian source centered at $x_0=0.5$.
In this configuration, 71 modes out of 79 satisfy $\rho_k>1$ (89.9\%).

\begin{table}[htbp]
	\centering
	\caption{Reduced one-dimensional benchmark, $\Peh=6.25$.}
	\label{tab:1d}
	\begin{tabular}{lcccc}
		\toprule
		Method      & $\norm{e}_{L^2}$         & $\norm{e}_{L^\infty}$    & Osc. & $\bar\rho_{\mathrm{stab}}$              \\
		\midrule
		Galerkin    & $6.245\!\times\!10^{-3}$ & $3.851\!\times\!10^{-2}$ & 21   & $1.125\!\times\!10^{1}$ \\
		Upwind/SUPG & $1.282\!\times\!10^{-3}$ & $6.066\!\times\!10^{-3}$ & 0    & $1.552$                 \\
		ADSC        & $9.378\!\times\!10^{-4}$ & $4.836\!\times\!10^{-3}$ & 2    & $1.659$                 \\
		\bottomrule
	\end{tabular}
\end{table}

Table~\ref{tab:1d} reports the reduced one-dimensional benchmark. At the level of this reduced one-dimensional discrete interior model,
the coincidence of upwinding and SUPG is verified to machine precision.
This identity is specific to the finite-difference formulation used here;
finite-element SUPG variants with consistent mass matrices can differ.
ADSC achieves the smallest $L^2$ and $L^\infty$ errors with two residual
detected oscillations and a mean modal indicator slightly above
upwinding/SUPG (\(1.659\) versus \(1.552\)). Thus, in this reduced 1D test,
the partial ADSC activation does not completely remove the dominant modal
peak even though it improves the error norms.
CIP-type, LPS-type, and AFC-inspired edge-diffusion comparators are not reported in this
one-dimensional benchmark, because their structured-grid definitions in
Section~\ref{sec:numerics} are included for the two-dimensional comparison
and do not reduce to a unique one-dimensional representative.

\subsection{Two-dimensional directional convection test}

Parameters: $\eps=2\times 10^{-3}$,
$\bfbeta=(1,0.6)/\sqrt{1^2+0.6^2}$, $N_e=45$, $\Peh=5.56$.
Gaussian source with $\sigma=0.07$ centered at $(0.5,0.5)$.
ADSC parameters: $\gamma_{\min}=0.08$, $\gamma_{\max}=0.25$,
$\kappa=2$, $\omega=0.35$, \(\delta_h=10^{-12}\), and
\(\eta_{\rm det}=5\times10^{-2}\); $\gamma_0(5.56)=0.198$.
For the same test, \(\bar\rho_{\mathrm{Gal}}=3.878\),
\(\Peh=5.56\), and \(\bar B\approx0.686\). The modal-balance formula gives
\(\gamma_0^{\mathrm{bal}}\approx0.378\), while the saturating law used in
the simulations gives the more conservative value \(\gamma_0=0.198\).
This is consistent with the predicted scaling order but also shows that the implemented
parameter choice is deliberately below the full balance target.
The monotone regularized activation iteration reaches a relative variation
$4.34\times10^{-10}$ in 47 iterations, and the residual of the final
fixed-activation linear system is $1.77\times10^{-15}$. The final
activation mass is \(7.538\times10^2\), with 936 nodes above the
\(10^{-3}\) activation threshold out of 1936 interior nodes.
At this $\Peh$, 1775/1936 modal pairs are convectively dominant (91.7\%).

\begin{table}[htbp]
\centering
\small
\caption{Modal-balance calibration diagnostic over the mesh-refinement family.
The quantity \(\gamma_0^{\mathrm{bal}}\) is computed from
\eqref{eq:gamma_projected_choice} with \(\rho_{\mathrm{target}}=1\), using
the Galerkin-dominant set. The implemented \(\gamma_0\) is the bounded
saturating value used in all simulations.}
\label{tab:gamma_balance_validation}
\begin{tabular}{cccccc}
\toprule
\(N_e\) & \(\Peh\) & \(\bar\rho_{\mathrm{Gal}}\) & \(\bar B\) & \(\gamma_0^{\mathrm{bal}}\) & implemented \(\gamma_0\) \\
\midrule
30  & 8.33 & 5.359 & 0.686 & 0.381 & 0.214 \\
45  & 5.56 & 3.878 & 0.686 & 0.378 & 0.198 \\
60  & 4.17 & 3.181 & 0.684 & 0.383 & 0.184 \\
90  & 2.78 & 2.575 & 0.685 & 0.414 & 0.160 \\
120 & 2.08 & 2.311 & 0.690 & 0.456 & 0.140 \\
\bottomrule
\end{tabular}
\end{table}

The implemented baseline values are deliberately below the raw modal-balance
values in Table~\ref{tab:gamma_balance_validation}. The chosen interval
\([0.08,0.25]\) is a conservative fraction of the observed raw-balance range
\([0.378,0.456]\), roughly one fifth to two thirds of those values. This
choice avoids turning the baseline correction into a nearly uniform upwind-like
diffusion, limits over-smoothing of the reference layer, and leaves the
reinforced term \(\gamma_1\eta_{\Pe}\chi\) to act only where the directional
detector identifies local non-monotonicity. It is a reproducible heuristic,
not an optimized parameter law. Figures~\ref{fig:solutions_2d},
\ref{fig:errors_2d_common_scale}, and~\ref{fig:rho_2d} provide the corresponding
solution, error, and modal-indicator diagnostics.

\begin{figure}[htbp]
	\centering
	\includegraphics[width=0.92\textwidth]{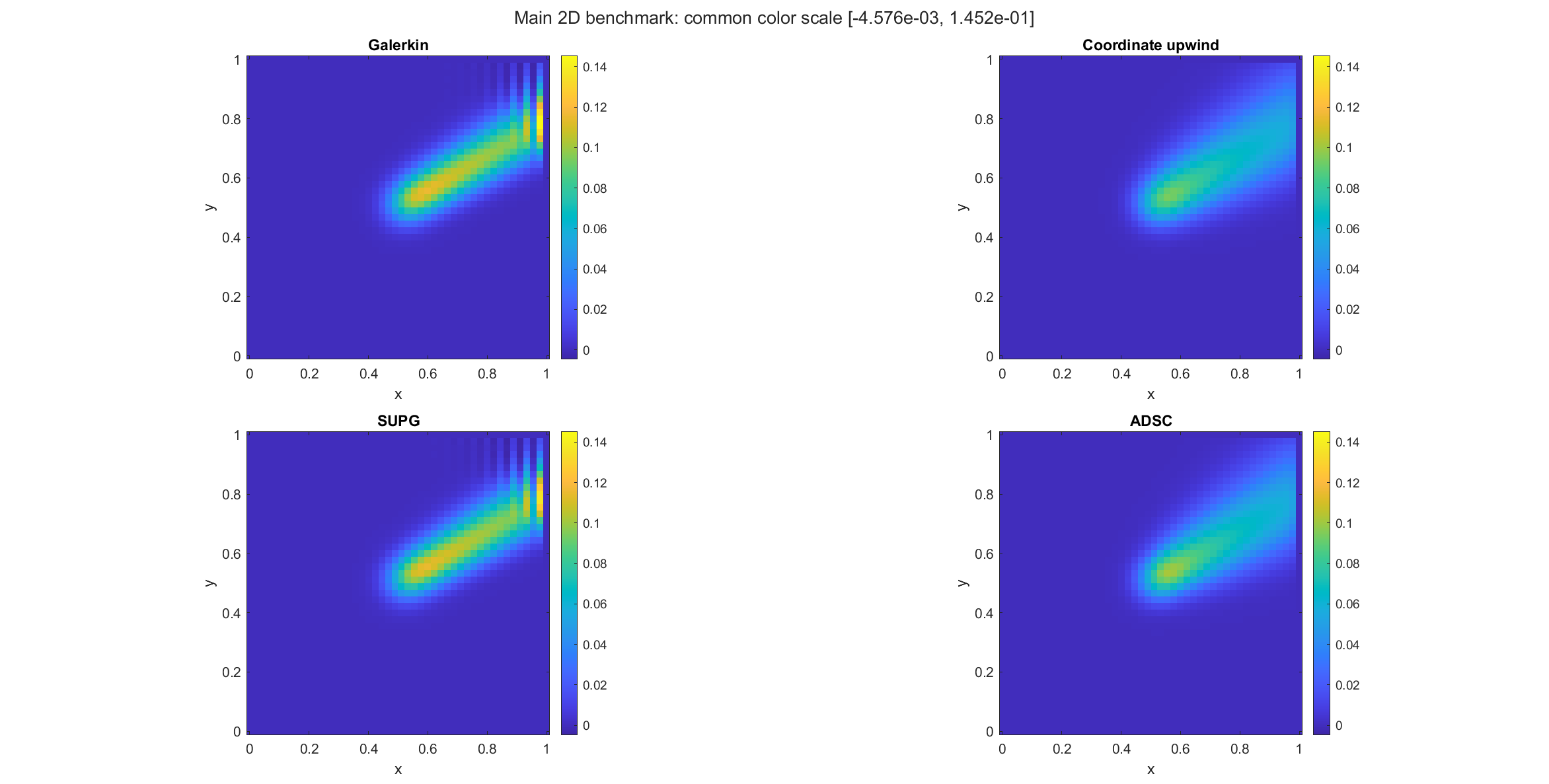}
	\caption{Two-dimensional numerical solutions for the main test case associated with~\eqref{eq:strong}, plotted with a common color scale. The common scale makes the amplitude comparison visually consistent with the quantitative diagnostics in Table~\ref{tab:2d}; the color range is approximately \([-4.58\times10^{-3},1.45\times10^{-1}]\). Galerkin displays visible directional oscillations, coordinate upwinding gives the strongest smoothing, SUPG acts through streamline dissipation, and ADSC provides a localized directional correction. CIP-type, LPS-type, and AFC-inspired solution maps are not shown; their quantitative diagnostics appear in Table~\ref{tab:2d}.}
	\label{fig:solutions_2d}
\end{figure}

\begin{figure}[htbp]
	\centering
	\includegraphics[width=0.92\textwidth]{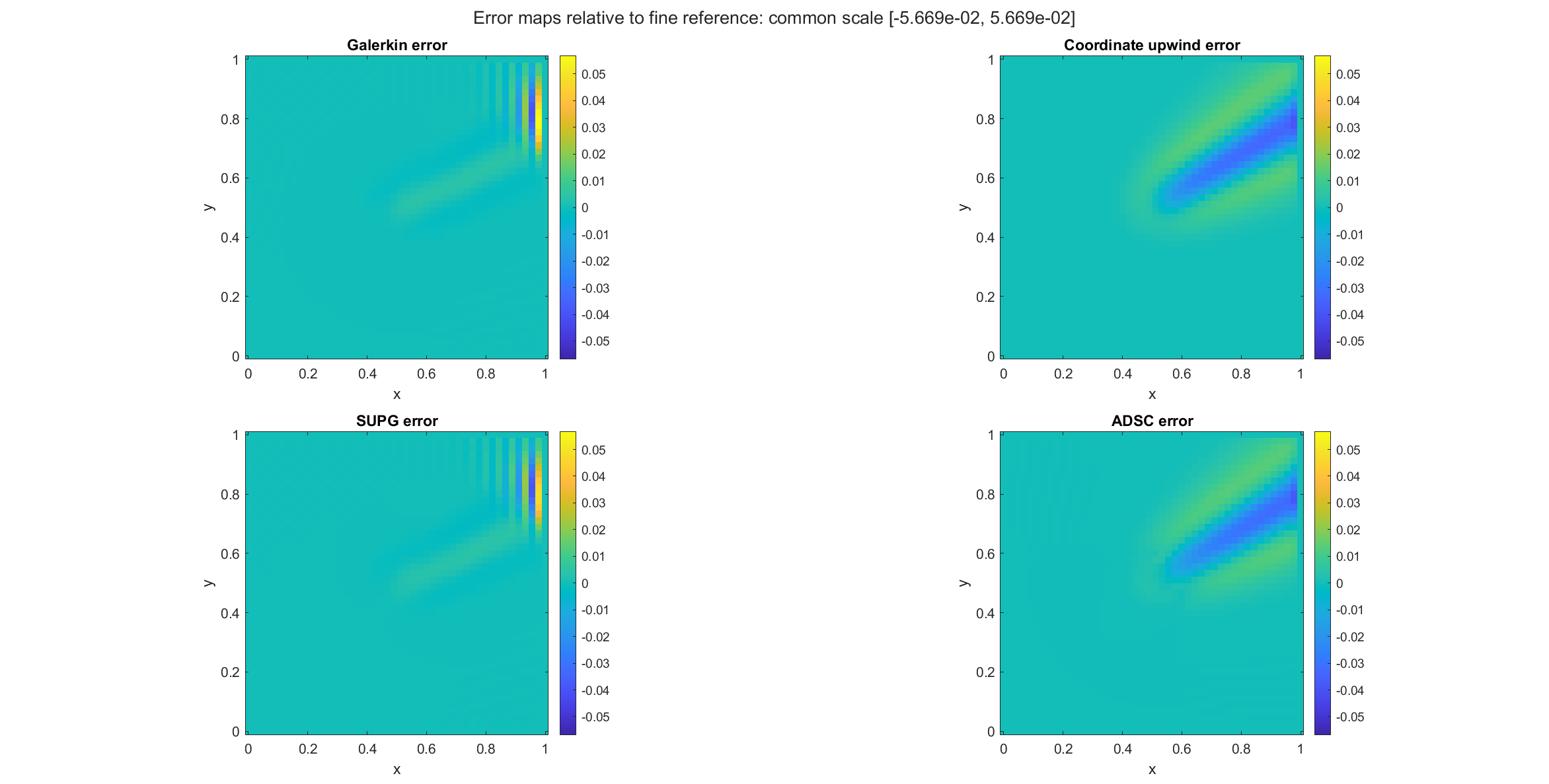}
	\caption{Error maps relative to the fine-grid reference solution for the main two-dimensional benchmark, plotted with a common symmetric color scale. The maps complement Table~\ref{tab:2d} by showing where the different stabilizations introduce oscillatory errors, streamline smearing, or localized directional damping.}
	\label{fig:errors_2d_common_scale}
\end{figure}

\begin{figure}[htbp]
	\centering
	\includegraphics[width=0.92\textwidth]{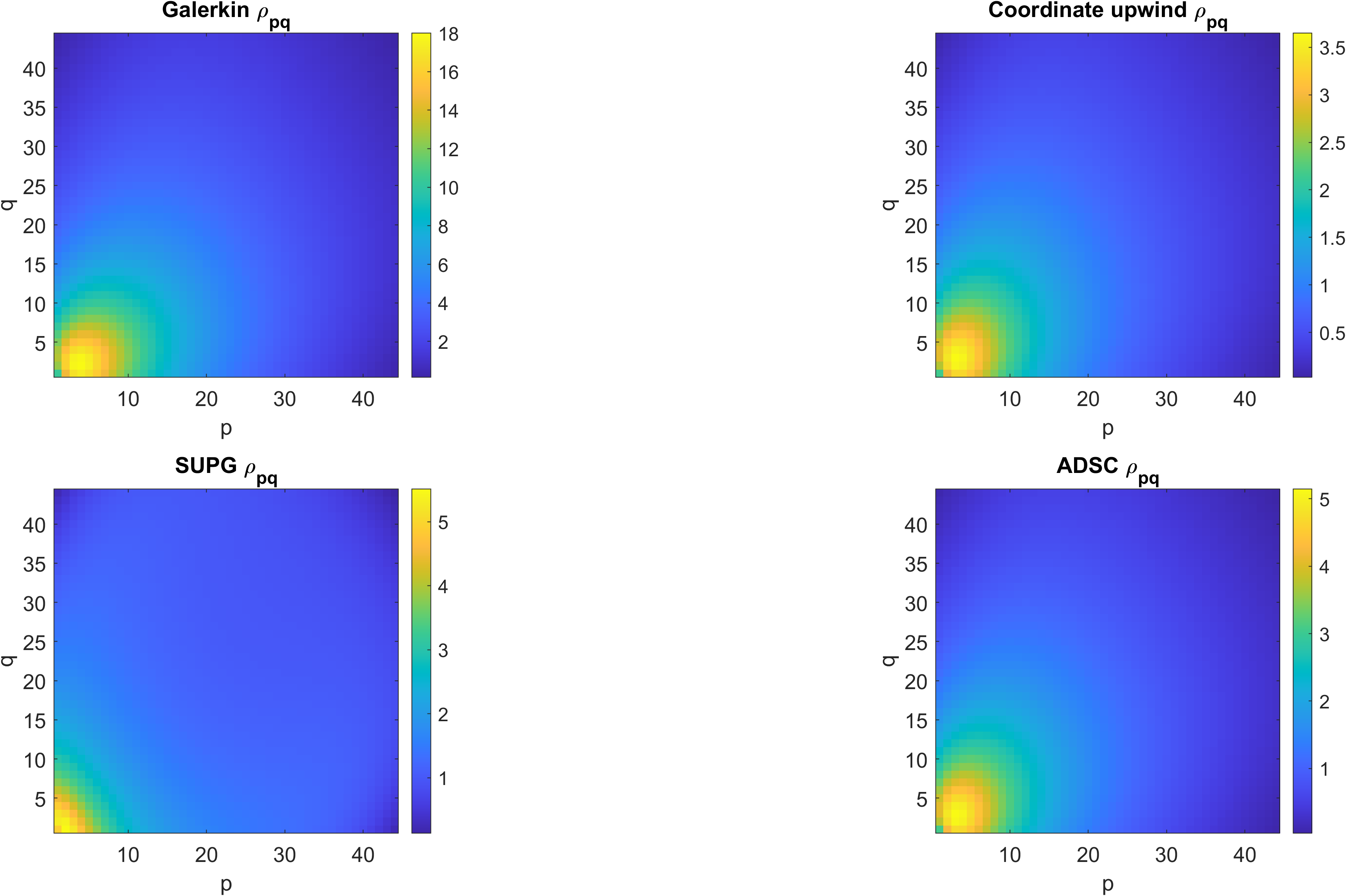}
	\caption{Maps of the modal convection-dominance indicator $\rho_{pq}$ defined in~\eqref{eq:indicator}.
		Coordinate upwinding produces the strongest uniform reduction, SUPG
		acts anisotropically through streamline diffusion, and ADSC gives a
		selective reduction consistent with its sparse directional design.
		The mean indicator over the Galerkin-dominant set is reduced from
		$3.878$ (Galerkin) to $1.123$ (ADSC), compared with $0.811$ for
		coordinate upwinding and $1.298$ for SUPG.}
	\label{fig:rho_2d}
\end{figure}

\begin{table}[htbp]
	\centering
	\caption{Quantitative synthesis of two-dimensional stabilization effects,
		$N_e=45$. Here Det. denotes the number of grid nodes with a directional
		sign change above the common absolute threshold defined in
		Section~\ref{subsec:numerical_setup}.}
	\label{tab:2d}
	\begin{tabular}{lcccccc}
		\toprule
		Method        & $\norm{e}_{L^2}$         & $\norm{e}_{L^\infty}$    & TV
		              & Det. & $E_{\mathrm{ext}}$       & $\bar\rho_{\mathrm{stab}}$                                                        \\
		\midrule
		Galerkin      & $4.394\!\times\!10^{-3}$ & $5.669\!\times\!10^{-2}$ & $2.443\!\times\!10^{-1}$ & 262 & $3.700\!\times\!10^{-2}$ & 3.878 \\
		Coord.~upwind & $6.813\!\times\!10^{-3}$ & $3.877\!\times\!10^{-2}$ & $1.423\!\times\!10^{-1}$ & 9   & 0                        & 0.811 \\
		SUPG          & $4.037\!\times\!10^{-3}$ & $4.772\!\times\!10^{-2}$ & $2.425\!\times\!10^{-1}$ & 273 & $2.869\!\times\!10^{-2}$ & 1.298 \\
		CIP-type      & $2.856\!\times\!10^{-3}$ & $4.353\!\times\!10^{-2}$ & $2.099\!\times\!10^{-1}$ & 192 & $2.020\!\times\!10^{-2}$ & 2.560 \\
		LPS-type      & $4.129\!\times\!10^{-3}$ & $5.212\!\times\!10^{-2}$ & $2.434\!\times\!10^{-1}$ & 276 & $3.296\!\times\!10^{-2}$ & 2.536 \\
		AFC-inspired      & $5.869\!\times\!10^{-3}$ & $3.696\!\times\!10^{-2}$ & $1.501\!\times\!10^{-1}$ & 14  & $1.080\!\times\!10^{-5}$ & 1.720 \\
		ADSC          & $6.123\!\times\!10^{-3}$ & $3.775\!\times\!10^{-2}$ & $1.474\!\times\!10^{-1}$ & 9   & $6.100\!\times\!10^{-10}$ & 1.123 \\
		\bottomrule
	\end{tabular}
\end{table}

Recall from the setup that the CIP-, LPS-, and AFC-inspired entries are
structured-grid representatives of the corresponding stabilization
mechanisms and are not production implementations; the comparisons below
concern only the dissipative behavior within this specific structured-grid
modal framework.
Table~\ref{tab:2d} highlights major differences from the one-dimensional
setting. SUPG reduces both \(\bar\rho_{\mathrm{stab}}\) and \(E_{\mathrm{ext}}\) relative to
	Galerkin while retaining a low $L^2$ error, consistent with the expected
streamline-diffusion effect of SUPG. Within the structured-grid
representatives used here, the CIP-type entry gives the smallest \(L^2\)
error but leaves a larger modal indicator, whereas the AFC-inspired entry and
coordinate upwinding strongly reduce extrema violations at the cost of
stronger smoothing. ADSC should not be read as uniformly superior to
coordinate upwinding: in Table~\ref{tab:2d}, upwinding has exactly zero
\(E_{\mathrm{ext}}\), a slightly smaller \(L^\infty\) error, and a much
lower one-shot computational cost. The ADSC advantage in this test is
instead more specific: it gives near-upwind extrema control while retaining
a less uniformly damped directional modal footprint than full coordinate
upwinding and a lower \(\bar\rho_{\mathrm{stab}}\) than SUPG and the structured-grid
CIP-, LPS-, and AFC-inspired edge-diffusion comparators. This statement is limited to the
representatives defined in Section~\ref{sec:numerics} and is not a claim
of superiority over calibrated production implementations.

\begin{table}[htbp]
	\centering
	\caption{Extrema diagnostics, $N_e=45$.}
	\label{tab:extrema}
	\begin{tabular}{lccc}
		\toprule
		Method        & Undershoot               & Overshoot                & $E_{\mathrm{ext}}$       \\
		\midrule
		Galerkin      & $4.182\!\times\!10^{-3}$ & $3.281\!\times\!10^{-2}$ & $3.700\!\times\!10^{-2}$ \\
		Coord.~upwind & 0                        & 0                        & 0                        \\
		SUPG          & $4.576\!\times\!10^{-3}$ & $2.412\!\times\!10^{-2}$ & $2.869\!\times\!10^{-2}$ \\
		CIP-type      & $3.194\!\times\!10^{-4}$ & $1.988\!\times\!10^{-2}$ & $2.020\!\times\!10^{-2}$ \\
		LPS-type      & $4.483\!\times\!10^{-3}$ & $2.847\!\times\!10^{-2}$ & $3.296\!\times\!10^{-2}$ \\
		AFC-inspired      & $1.080\!\times\!10^{-5}$ & 0                        & $1.080\!\times\!10^{-5}$ \\
		ADSC          & $6.100\!\times\!10^{-10}$ & 0                       & $6.100\!\times\!10^{-10}$ \\
		\bottomrule
	\end{tabular}
\end{table}

Figures~\ref{fig:footprint_2d} and~\ref{fig:detector_2d} then display the modal footprint and the spatial activation pattern for the same test. Table~\ref{tab:extrema} separates amplitude preservation from modal
reduction. SUPG and CIP-type stabilization reduce the extrema violation
but do not remove it. Coordinate upwinding, AFC-inspired correction, and ADSC
remove the overshoot; ADSC reduces the total extrema violation by more
than seven orders of magnitude relative to Galerkin
($3.700\times10^{-2}$ to $6.100\times10^{-10}$) while keeping the modal
reduction tied to the directional indicator.

\begin{figure}[htbp]
	\centering
	\includegraphics[width=0.72\textwidth]{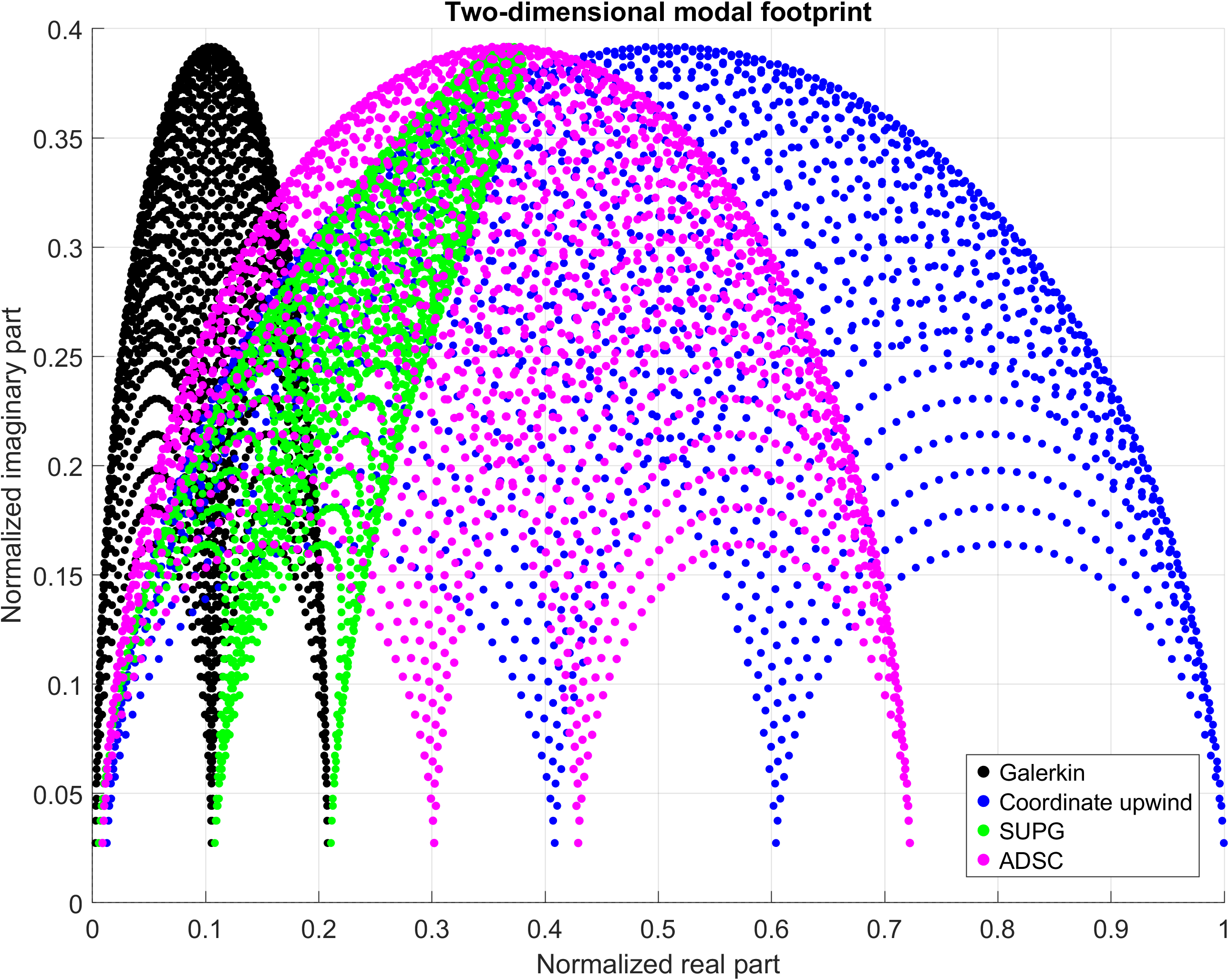}
	\caption{Normalized modal footprint \(\mathcal F_h\) from Section~\ref{sec:footprint}
		in the complex plane for the representative fixed-symbol methods used in
		the modal-design argument:
		Galerkin, coordinate upwinding, SUPG, and ADSC. Stabilized methods shift
		the modal contributions toward larger effective real parts. ADSC does
		not realize the ideal modal rectification exactly; it provides a sparse
		local surrogate for its dissipative effect. Both axes are normalized by
		the maximum modulus of the Galerkin symbol over the displayed modal
		set. CIP-type and LPS-type are
		omitted for readability, while AFC-inspired is not shown because its
		effective footprint is solution-dependent. Full quantitative
		comparisons are reported in Tables~\ref{tab:2d},
		\ref{tab:rhs}, and~\ref{tab:direction}.}
	\label{fig:footprint_2d}
\end{figure}

\begin{figure}[htbp]
	\centering
	\includegraphics[width=0.62\textwidth]{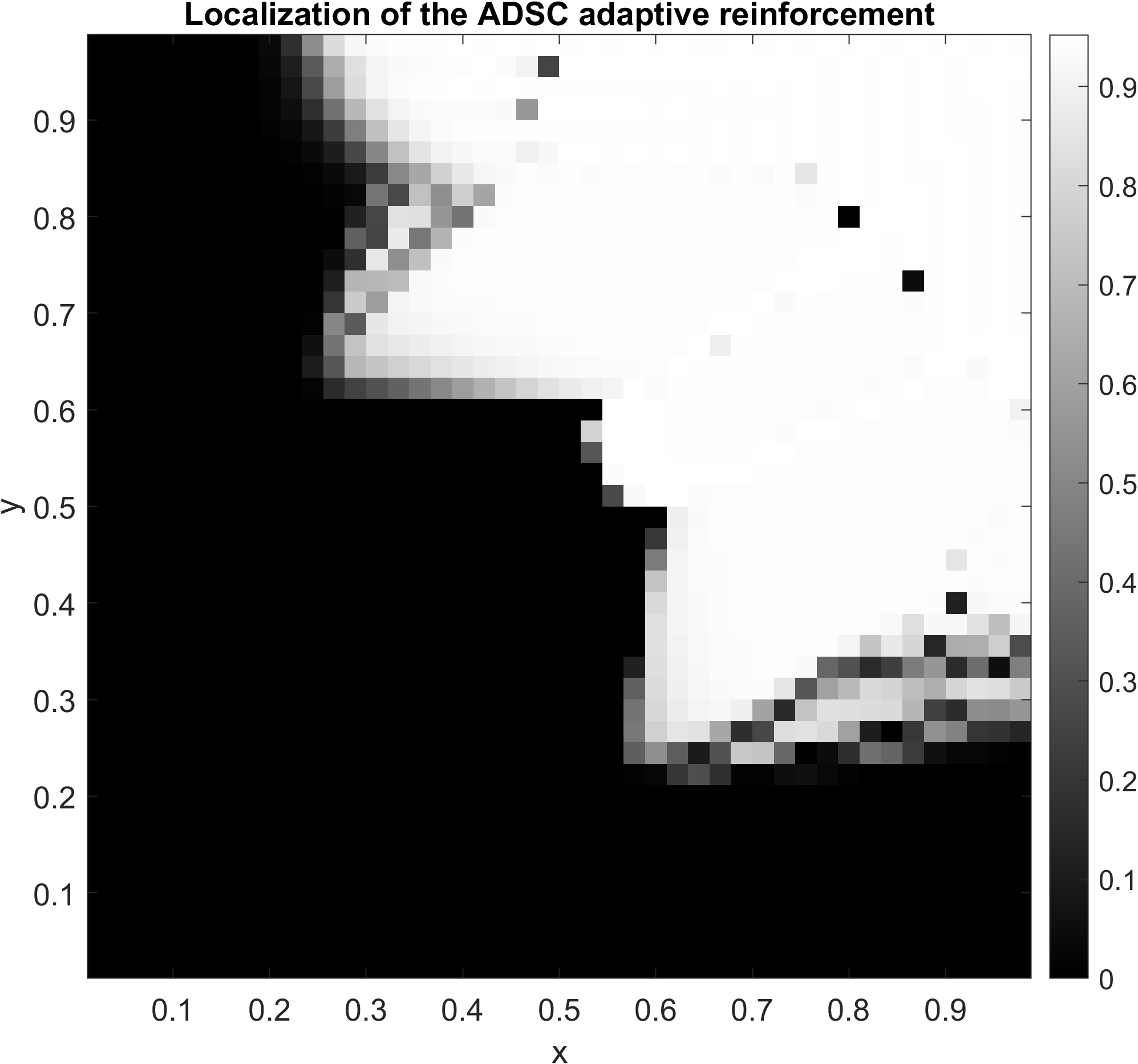}
		\caption{Spatial localization of the monotone regularized ADSC activation in the
		main test. The adaptive reinforcement is activated on a structured
		subset of the domain, unlike uniform dissipative corrections. During construction
		of the stabilized operator, the regularized detector reaches an
		activation mass of \(7.538\times10^2\), with 936 nodes above
		\(10^{-3}\) activation out of 1936, about 48\%. The activation pattern
		is consistent with the convection direction \(\bfbeta=(1,0.6)/\sqrt{1.36}\):
		nodes are activated downstream of the Gaussian source, where the
		transported front creates directional non-monotonicity. After the final fixed-activation
		solve, the separate
		post-processed directional non-monotonicity diagnostic reported in
		Table~\ref{tab:mesh_diag} is 9 for ADSC, compared with 262 for Galerkin.}
	\label{fig:detector_2d}
\end{figure}

\subsection{Fixed-reference activation test}
\label{subsec:fixed_reference_activation}

The conditional convergence result in Theorem~\ref{thm:adsc_pde_convergence}
concerns a fixed regularized ADSC operator selected from an auxiliary
activation field. By contrast, the practical ADSC iteration used in the
preceding tables generates the activation from the computed stabilized
solution. To make this distinction numerically explicit, we add a
fixed-reference activation test. The fine-grid reference solution is
interpolated onto the coarse grid, the regularized detector is applied to
this interpolated field, and the resulting activation is then frozen before
solving a single stabilized linear system.

This experiment is not a proof of convergence for the fully coupled
nonlinear activation. Its purpose is narrower: it verifies that the
fixed-activation mechanism appearing in the theorem already reproduces the
main qualitative effects expected from ADSC, namely extrema control and
reduction of the mean modal indicator.

\begin{table}[htbp]
\centering
\small
\caption{Fixed-reference activation test in the main two-dimensional benchmark.
The fixed-reference ADSC activation is generated from the fine reference
solution interpolated on the coarse grid and then held fixed. The column
\(\|U-U_{\mathrm{cpl}}\|_{L^2}\) measures the distance to the fully coupled
ADSC solution.}
\label{tab:fixed_reference_activation}
\begin{tabular}{lcccccc}
\toprule
Method & \(\|e\|_{L^2}\) & \(\|e\|_{L^\infty}\) & \(E_{\mathrm{ext}}\) & Det. & \(\overline{\rho}_{\mathrm{stab}}\) & \(\|U-U_{\mathrm{cpl}}\|_{L^2}\) \\
\midrule
Galerkin       & \(4.394\!\times\!10^{-3}\) & \(5.669\!\times\!10^{-2}\) & \(3.700\!\times\!10^{-2}\) & 262 & 3.878 & -- \\
ADSC fixed-ref & \(5.768\!\times\!10^{-3}\) & \(3.152\!\times\!10^{-2}\) & \(3.010\!\times\!10^{-8}\) & 32  & 1.188 & \(1.094\!\times\!10^{-3}\) \\
ADSC coupled   & \(6.123\!\times\!10^{-3}\) & \(3.775\!\times\!10^{-2}\) & \(6.100\!\times\!10^{-10}\) & 9 & 1.123 & 0 \\
\bottomrule
\end{tabular}
\end{table}

Table~\ref{tab:fixed_reference_activation} indicates that the fixed-reference
operator is not merely a formal construction. Relative to Galerkin, it
reduces the total extrema violation from \(3.700\times10^{-2}\) to
\(3.010\times10^{-8}\), while decreasing the mean modal indicator from
\(3.878\) to \(1.188\). The coupled ADSC iteration is more selective in
this test, with a smaller final detector count and a slightly lower mean
modal indicator. Nevertheless, the fixed-reference and coupled ADSC
solutions are close at the level of the reported diagnostics, and their
\(L^2\)-distance is \(1.094\times10^{-3}\). This supports the use of the
fixed-activation analysis as a relevant theoretical proxy, while keeping
separate the still-open questions of nonlinear discrete uniqueness,
energy-norm rates, and algorithmic convergence for the fully coupled
activation.

\subsection{Sensitivity analysis for ADSC parameters}

\begin{table}[htbp]
	\centering
	\caption{Sensitivity to parameter bounds, main 2D test.}
	\label{tab:sensitivity}
	\begin{tabular}{ccccccc}
		\toprule
		$\gamma_{\min}$ & $\gamma_{\max}$  & $\kappa$
		                & $\gamma_0(\Peh)$ & $\norm{e}_{L^\infty}$ & Det.  & $\bar\rho_{\mathrm{stab}}$                             \\
		\midrule
		0.08            & 0.25             & 2.00                  & 0.198 & $3.775\!\times\!10^{-2}$ & 9   & 1.123 \\
		0.10            & 0.30             & 1.50                  & 0.239 & $3.875\!\times\!10^{-2}$ & 9   & 1.059 \\
		0.10            & 0.30             & 2.00                  & 0.239 & $4.464\!\times\!10^{-2}$ & 9   & 0.981 \\
		0.12            & 0.35             & 2.00                  & 0.280 & $5.008\!\times\!10^{-2}$ & 10  & 0.871 \\
		\bottomrule
	\end{tabular}
\end{table}

Table~\ref{tab:sensitivity} shows the expected trade-off. Larger
parameter bounds reduce \(\bar\rho_{\mathrm{stab}}\) more strongly, but increase the
$L^\infty$ error through stronger smoothing. The selected parameter
triple $(0.08,0.25,2)$ is a conservative, reproducible choice below the raw
modal-balance target, not an error-optimized setting. It keeps
\(\bar\rho_{\mathrm{stab}}\) near
the intended modal-reduction range
while keeping the \(L^\infty\) error at \(3.775\times10^{-2}\), compared
with \(5.008\times10^{-2}\) for the most aggressive setting
\((\gamma_{\min},\gamma_{\max})=(0.12,0.35)\).

\subsection{Iteration sensitivity}

\begin{table}[htbp]
	\centering
	\caption{Sensitivity to maximum number of fixed-point iterations.}
	\label{tab:iter}
	\begin{tabular}{ccccc}
		\toprule
		Max.~iter. & $\norm{e}_{L^\infty}$    & Det. & $\bar\rho_{\mathrm{stab}}$ & Final variation          \\
		\midrule
		100        & $3.775\!\times\!10^{-2}$ & 9    & 1.123      & $4.343\!\times\!10^{-10}$ \\
		300        & $3.775\!\times\!10^{-2}$ & 9    & 1.123      & $4.343\!\times\!10^{-10}$ \\
		500        & $3.775\!\times\!10^{-2}$ & 9    & 1.123      & $4.343\!\times\!10^{-10}$ \\
		1000       & $3.775\!\times\!10^{-2}$ & 9    & 1.123      & $4.343\!\times\!10^{-10}$ \\
		\bottomrule
	\end{tabular}
\end{table}

Table~\ref{tab:iter} illustrates the convergence mechanism of
Proposition~\ref{prop:adsc_convergence}. The activation field reaches the
prescribed relative-change tolerance well before 100 iterations; after the
final fixed-activation solve, the reported
quantities are identical across the tested iteration caps up to the
displayed precision.

\begin{table}[htbp]
	\centering
	\small
	\caption{Sensitivity of ADSC to relaxation and initialization in the main
		test.}
	\label{tab:omega_init}
	\begin{tabular}{lccccc}
		\toprule
		Case & $\norm{e}_{L^\infty}$ & Det. & $\bar\rho_{\mathrm{stab}}$ & Iter. & Final variation \\
		\midrule
		$\omega=0.20$ & $3.773\!\times10^{-2}$ & 9 & 1.124 & 86 & $9.806\!\times10^{-10}$ \\
		$\omega=0.35$ & $3.775\!\times10^{-2}$ & 9 & 1.123 & 47 & $4.343\!\times10^{-10}$ \\
		$\omega=0.50$ & $3.778\!\times10^{-2}$ & 9 & 1.122 & 31 & $1.729\!\times10^{-10}$ \\
		$\omega=0.75$ & $3.780\!\times10^{-2}$ & 9 & 1.121 & 18 & $2.186\!\times10^{-11}$ \\
		$\omega=1.00$ & $3.781\!\times10^{-2}$ & 9 & 1.120 & 10 & $0$ \\
		\midrule
		Zero start     & $3.646\!\times10^{-2}$ & 9  & 1.161 & 48 & $3.950\!\times10^{-10}$ \\
		Coarse-grid start & $3.306\!\times10^{-2}$ & 10 & 0.980 & 78 & $2.539\!\times10^{-11}$ \\
		Galerkin start & $3.775\!\times10^{-2}$ & 9  & 1.123 & 47 & $4.343\!\times10^{-10}$ \\
		Upwind start   & $3.139\!\times10^{-2}$ & 16 & 1.248 & 48 & $3.292\!\times10^{-10}$ \\
		\bottomrule
	\end{tabular}
\end{table}

Table~\ref{tab:omega_init} addresses the practical question of whether
the 47 activation solves are an artifact of the relaxation or of the
Galerkin warm start. Increasing \(\omega\) mainly reduces the number of
activation iterations, while the final diagnostics remain close at the
displayed precision. Replacing the Galerkin warm start by a zero,
coarse-grid Galerkin, or upwind initialization has a more visible effect
for the regularized detector: the final detector count ranges from 9 to 16,
\(\bar\rho_{\mathrm{stab}}\) ranges from 0.980 to 1.248 and the
\(L^\infty\) error ranges from \(3.139\times10^{-2}\) to
\(3.775\times10^{-2}\). Thus the reported cost is not a consequence of a
fragile relaxation choice, but the selected regularized activation can
retain a measurable dependence on the initial profile in this nonlinear
test. The cost reported below is the cost of resolving that monotone
activation stabilization with sparse nearest-neighbor linear systems.

\paragraph{Computational cost.}
In the main test, Galerkin, coordinate upwinding, SUPG, CIP-type, and
LPS-type each require one sparse linear solve. AFC-inspired uses 80 nonlinear
limiting iterations in the implementation used here. ADSC requires 47
activation linear solves, followed by one final fixed-activation sparse solve.
Thus ADSC is more expensive than the one-shot linear stabilizations, but
its overhead is controlled by the monotone activation stabilization reported in
Table~\ref{tab:iter}. The important structural point is that every ADSC
linear system has the same sparse nearest-neighbor form; no dense
rectified operator is assembled. Consequently, the per-solve cost remains
of the same sparse nearest-neighbor order throughout the activation loop,
even though several such solves are required.

The following MATLAB wall-clock timings were measured with
\texttt{tic}/\texttt{toc} in one representative run of the same
implementation, excluding plotting and reference-grid generation.
Method-specific timings include the method-specific matrix assembly and
sparse solves after the common operator and source construction. Since
single-run timings can vary with caching and background system activity,
these values are reported only as implementation-level indicators for the
present MATLAB code, not as hardware-independent benchmarks and not as the
basis for ranking the methods. Algorithmic comparisons should primarily
rely on the number and sparse structure of the linear systems.

\begin{table}[htbp]
	\centering
	\small
	\caption{Representative MATLAB wall-clock timings over the mesh-refinement
	run for the fully specified Cartesian methods. Timings are single-run
	implementation indicators, not hardware-independent benchmarks.}
	\label{tab:timing_scaling}
	\begin{tabular}{ccccccc}
		\toprule
		\(N_e\) & Gal. (s) & Upwind (s) & SUPG (s) & ADSC (s)
		& ADSC/Gal. & ADSC iter. \\
		\midrule
		30  & \(1.58\!\times10^{-3}\) & \(1.15\!\times10^{-3}\) & \(3.02\!\times10^{-3}\) & \(7.68\!\times10^{-2}\) & 48.56 & 38 \\
		45  & \(3.85\!\times10^{-3}\) & \(3.83\!\times10^{-3}\) & \(7.91\!\times10^{-3}\) & \(2.05\!\times10^{-1}\) & 53.19 & 47 \\
		60  & \(7.28\!\times10^{-3}\) & \(6.14\!\times10^{-3}\) & \(1.61\!\times10^{-2}\) & \(3.66\!\times10^{-1}\) & 50.28 & 45 \\
		90  & \(1.51\!\times10^{-2}\) & \(1.40\!\times10^{-2}\) & \(2.83\!\times10^{-2}\) & \(9.22\!\times10^{-1}\) & 60.91 & 49 \\
		120 & \(2.38\!\times10^{-2}\) & \(2.63\!\times10^{-2}\) & \(6.28\!\times10^{-2}\) & \(1.47\)                 & 61.65 & 51 \\
		\bottomrule
	\end{tabular}
\end{table}

Table~\ref{tab:timing_scaling} reports that the present MATLAB
activation implementation is significantly more expensive than one-shot
linear stabilizations, and that the relative overhead grows over the
tested range. The activation iteration count remains moderate over this mesh range
(38, 47, 45, 49, and 51 iterations for \(N_e=30,45,60,90,120\)),
so the cost increase is mainly the repeated
sparse solves. This is a practical
limitation of the current implementation and should be weighed against the
modal and extrema diagnostics. Decomposing the ratio gives a more precise picture: \(t_{\rm ADSC}/[(n_{\rm iter}+1)t_{\rm Gal}]\) is approximately
1.25, 1.11, 1.09, 1.22, and 1.19 for
\(N_e=30,45,60,90,120\), respectively. Thus the per-solve cost of the
ADSC matrix remains of the same sparse order as the Galerkin solve in
this implementation; the main overhead is the number of activation solves,
not a dense or higher-complexity matrix. The growth of the total ratio is
therefore an implementation-level warning about repeated sparse solves,
not evidence that each ADSC solve has a higher asymptotic order. A
reduced-activation or one-shot ADSC variant can lower the cost
substantially. This is quantified in the few-shot study reported in
Section~\ref{subsec:few_shot}.

\subsection{Few-shot activation and practical cost}
\label{subsec:few_shot}

The timings in Table~\ref{tab:timing_scaling} correspond to the fully
converged monotone activation procedure. This is useful as a reference
implementation, but it overestimates the practical cost of ADSC when the
activation process is stopped after a small prescribed number of updates.
To check whether this reduced-activation strategy is stable under mesh
refinement, we repeat the main two-dimensional benchmark for
\(N_e=30,45,60,90,120\) with 5, 10, and 1000 allowed activation updates.
The 1000-iteration cap represents the reference coupled ADSC output obtained with the prescribed stopping criterion or the iteration cap
on each mesh; it is denoted by \(U_{\mathrm{conv}}\) in the last column.

\begin{table}[htbp]
\centering
\scriptsize
\setlength{\tabcolsep}{3pt}
\caption{Few-shot ADSC over the mesh-refinement family. The 1000-iteration cap represents the reference coupled ADSC iterate obtained with the prescribed stopping criterion or the iteration cap for each mesh.}
\label{tab:adsc_few_shot}
\begin{tabular}{cccccccc}
\toprule
\(N_e\) & \(Pe_h\) & Max.\ iter. & Used iter. & \(\|e\|_{L^2}\) & \(E_{\mathrm{ext}}\) & \(\bar\rho_{\mathrm{ADSC}}\) & \(\|U-U_{\mathrm{conv}}\|_{0,h}\) \\
\midrule
30  & 8.33 & 5    & 5  & \(8.414\times10^{-3}\) & \(6.428\times10^{-11}\) & 0.834 & \(6.404\times10^{-6}\) \\
30  & 8.33 & 10   & 10 & \(8.416\times10^{-3}\) & \(6.293\times10^{-11}\) & 0.834 & \(5.950\times10^{-7}\) \\
30  & 8.33 & 1000 & 38 & \(8.416\times10^{-3}\) & \(6.278\times10^{-11}\) & 0.834 & 0 \\
\midrule
45  & 5.56 & 5    & 5  & \(6.086\times10^{-3}\) & \(6.523\times10^{-10}\) & 1.124 & \(1.017\times10^{-4}\) \\
45  & 5.56 & 10   & 10 & \(6.122\times10^{-3}\) & \(6.104\times10^{-10}\) & 1.123 & \(9.425\times10^{-6}\) \\
45  & 5.56 & 1000 & 47 & \(6.125\times10^{-3}\) & \(6.100\times10^{-10}\) & 1.123 & 0 \\
\midrule
60  & 4.17 & 5    & 5  & \(4.610\times10^{-3}\) & \(1.389\times10^{-10}\) & 1.227 & \(1.628\times10^{-5}\) \\
60  & 4.17 & 10   & 10 & \(4.616\times10^{-3}\) & \(1.216\times10^{-10}\) & 1.226 & \(1.541\times10^{-6}\) \\
60  & 4.17 & 1000 & 45 & \(4.616\times10^{-3}\) & \(1.205\times10^{-10}\) & 1.226 & 0 \\
\midrule
90  & 2.78 & 5    & 5  & \(2.760\times10^{-3}\) & \(5.430\times10^{-23}\) & 1.397 & \(9.620\times10^{-6}\) \\
90  & 2.78 & 10   & 10 & \(2.764\times10^{-3}\) & \(5.844\times10^{-23}\) & 1.397 & \(9.202\times10^{-7}\) \\
90  & 2.78 & 1000 & 49 & \(2.765\times10^{-3}\) & \(5.840\times10^{-23}\) & 1.397 & 0 \\
\midrule
120 & 2.08 & 5    & 5  & \(1.704\times10^{-3}\) & \(8.047\times10^{-40}\) & 1.534 & \(1.789\times10^{-5}\) \\
120 & 2.08 & 10   & 10 & \(1.713\times10^{-3}\) & \(8.047\times10^{-40}\) & 1.534 & \(3.044\times10^{-6}\) \\
120 & 2.08 & 1000 & 51 & \(1.714\times10^{-3}\) & \(8.047\times10^{-40}\) & 1.533 & 0 \\
\bottomrule
\end{tabular}
\end{table}

\begin{figure}[htbp]
\centering
\includegraphics[width=1\textwidth]{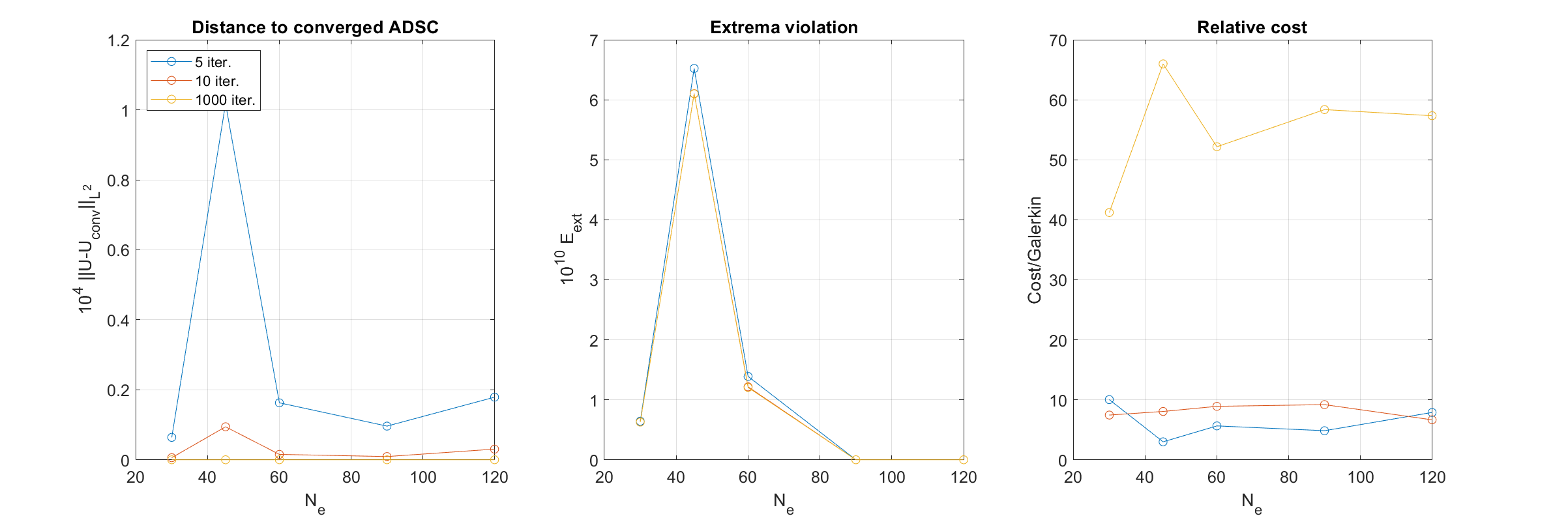}
\caption{Few-shot ADSC behavior over the mesh-refinement family. The left panel shows the distance to the reference coupled ADSC iterate \(U_{\mathrm{conv}}\) obtained with the prescribed stopping criterion or the 1000-iteration cap, the middle panel reports the extrema violation, and the right panel reports the relative cost with respect to the Galerkin solve.}
\label{fig:adsc_few_shot}
\end{figure}

Table~\ref{tab:adsc_few_shot} and Figure~\ref{fig:adsc_few_shot} show that the reduced-activation strategy is stable across the tested mesh-refinement family. Five activation updates already recover the extrema control and modal damping of the reference coupled ADSC iterate, while ten updates are nearly indistinguishable from the converged active-set output. For all tested meshes, five activation updates recover more than 99\% of the extrema-violation reduction at less than 15\% of the full coupled cost (Figure~\ref{fig:adsc_few_shot}, right panel). Thus the cost of the reference active-set procedure should be understood as a reference cost, not as the intrinsic practical cost of ADSC.

\subsection{Mesh refinement and observed error decrease}

\begin{table}[htbp]
	\centering
	\caption{Mesh refinement modal diagnostics in the main two-dimensional test.}
	\label{tab:mesh_diag}
	\scriptsize
	\setlength{\tabcolsep}{3pt}
	\begin{tabular}{ccccccc}
		\toprule
		$N_e$ & $h$                       & $\Peh$                     & Gal.~Det. & ADSC~Det.
		      & $\bar\rho_{\mathrm{Gal}}$ & $\bar\rho_{\mathrm{ADSC}}$ \\
		\midrule
		30    & $3.333\!\times10^{-2}$ & 8.33 & 181 & 8  & 5.359 & 0.834 \\
		45    & $2.222\!\times10^{-2}$ & 5.56 & 262 & 9  & 3.878 & 1.123 \\
		60    & $1.667\!\times10^{-2}$ & 4.17 & 221 & 14 & 3.181 & 1.226 \\
		90    & $1.111\!\times10^{-2}$ & 2.78 & 143 & 5  & 2.575 & 1.397 \\
		120   & $8.333\!\times10^{-3}$ & 2.08 & 111 & 0  & 2.311 & 1.533 \\
		\bottomrule
	\end{tabular}
\end{table}

\begin{table}[htbp]
	\centering
	\caption{Pre-asymptotic SUPG/ADSC inter-level slopes in the main two-dimensional test
	(not convergence rates).
These slopes are computed between consecutive mesh levels; they are numerical
diagnostics for the coupled ADSC iteration, not asymptotic convergence rates
for the fixed-activation operator of Theorem~\ref{thm:adsc_pde_convergence}.}
	\label{tab:mesh}
	\scriptsize
	\setlength{\tabcolsep}{3pt}
	\begin{tabular}{ccccccccc}
		\toprule
		$N_e$ & \multicolumn{2}{c}{SUPG $L^2$}
		      & \multicolumn{2}{c}{ADSC $L^2$}
		      & \multicolumn{2}{c}{SUPG $L^\infty$}
		      & \multicolumn{2}{c}{ADSC $L^\infty$} \\
		\cmidrule(lr){2-3}\cmidrule(lr){4-5}\cmidrule(lr){6-7}\cmidrule(lr){8-9}
		      & error & slope & error & slope & error & slope & error & slope \\
		\midrule
		30    & $6.905\!\times10^{-3}$ & --   & $8.416\!\times10^{-3}$ & --   & $5.534\!\times10^{-2}$ & --   & $4.659\!\times10^{-2}$ & --   \\
		45    & $4.039\!\times10^{-3}$ & 1.32 & $6.125\!\times10^{-3}$ & 0.78 & $4.770\!\times10^{-2}$ & 0.37 & $3.775\!\times10^{-2}$ & 0.52 \\
		60    & $2.662\!\times10^{-3}$ & 1.45 & $4.616\!\times10^{-3}$ & 0.98 & $3.962\!\times10^{-2}$ & 0.65 & $3.070\!\times10^{-2}$ & 0.72 \\
		90    & $1.341\!\times10^{-3}$ & 1.69 & $2.765\!\times10^{-3}$ & 1.26 & $2.605\!\times10^{-2}$ & 1.03 & $2.020\!\times10^{-2}$ & 1.03 \\
		120   & $7.364\!\times10^{-4}$ & 2.08 & $1.714\!\times10^{-3}$ & 1.66 & $1.617\!\times10^{-2}$ & 1.66 & $1.299\!\times10^{-2}$ & 1.54 \\
		\bottomrule
	\end{tabular}
	\par\smallskip
	\footnotesize\emph{Note.} The \(N_e=45\) ADSC \(L^2\) value differs
	slightly from Table~\ref{tab:2d} because Table~\ref{tab:2d} uses the
	\(N_{\rm ref}=264\) reference, whereas this refinement table uses the
	refined-reference post-check described in the text.
\end{table}

Tables~\ref{tab:mesh_diag} and~\ref{tab:mesh} show that the Galerkin modal
indicator decreases as the mesh is refined, as expected from the reduction
of $\Peh$. Across all meshes, ADSC keeps the detector count low and
substantially reduces \(\bar\rho_{\mathrm{stab}}\) relative to
\(\bar\rho_{\mathrm{Gal}}\). The reported rates are observed
rates against the fine-grid reference, not true errors with respect to a
closed-form solution and not a convergence proof. The exact-solution
checks in Sections~\ref{subsec:active_exact}--\ref{subsec:inactive_verification}
are the appropriate convergence diagnostics. SUPG is included as
a linear stabilized comparator: it gives smaller $L^2$ errors and higher
observed $L^2$ rates on this smooth Gaussian test, whereas ADSC is designed
primarily to reduce directional modal dominance and extrema violations.
The lower ADSC rates are consistent with the consistency estimate in
Theorem~\ref{thm:adsc_pde_convergence}: in the active regime the edge-diffusion
term contributes a first-order localized artificial-diffusion residual of
size controlled by the active scale \(|\bfbeta|h\gamma_0\). Thus ADSC is not
expected to display the same smooth-test $L^2$ rates as SUPG while the
detector remains active. As the mesh is refined, \(\Peh\) decreases, the
activation weakens, and the method approaches the centered diffusion-resolved
regime, which explains the increasing observed ADSC rates.
Both SUPG and ADSC show monotone error decrease over the tested meshes,
with improved rates as the mesh P\'eclet number approaches the
diffusion-resolved regime. The
increase of $\bar\rho_{\mathrm{ADSC}}$ under refinement should not be read
as a loss of stability. As $\Peh$ decreases, the activation factor
\(\eta_{\Pe}\) and the activation level both decrease, so less artificial
dissipation is injected and \(\bar\rho_{\mathrm{ADSC}}\) naturally drifts
toward the Galerkin value while errors decrease. The detector count is
grid-dependent and should be read as a local diagnostic, not as a norm.
The small difference between the \(N_e=45\) ADSC \(L^2\) value in
Table~\ref{tab:2d} and Table~\ref{tab:mesh} is due to the reference-grid
check described in the setup: Table~\ref{tab:2d} uses the \(N_{\rm ref}=264\)
reference value, while Table~\ref{tab:mesh} uses the refined-reference
post-check value; the difference is \(O(10^{-6})\).

\subsection{Active-regime verification with an exact solution}
\label{subsec:active_exact}

To separate reference-grid error from the consistency behavior in the
active regime, we also use the exact smooth solution
\[
	u(x,y)=\sin(\pi x)\sin(\pi y),
\]
with \(\eps=2\times10^{-3}\),
\(\bfbeta=(1,0.6)/\sqrt{1+0.6^2}\), and
\(f=-\eps\Delta u+\bfbeta\cdot\nabla u\). For the mesh sizes in
Table~\ref{tab:active_exact}, \(\Peh>1\), so the ADSC baseline is active.
This manufactured test is not meant to reproduce an internal layer; it is a
controlled exact-solution check of the active-regime consistency estimate.

\begin{table}[htbp]
	\centering
	\caption{Manufactured exact-solution verification in the active ADSC
		regime. The ADSC rates are consistent with the conditional
		first-order active-regime estimate, while the centered Galerkin
		discretization remains second order for this smooth solution.}
	\label{tab:active_exact}
	\begin{tabular}{ccccccc}
		\toprule
		\(N_e\) & \(h\) & \(\Peh\) & Gal.~\(L^2\) & rate & ADSC~\(L^2\) & rate \\
		\midrule
		30  & \(3.333\!\times10^{-2}\) & 8.33 & \(9.076\!\times10^{-4}\) & --   & \(2.848\!\times10^{-2}\) & --   \\
		45  & \(2.222\!\times10^{-2}\) & 5.56 & \(4.031\!\times10^{-4}\) & 2.00 & \(1.758\!\times10^{-2}\) & 1.19 \\
		60  & \(1.667\!\times10^{-2}\) & 4.17 & \(2.267\!\times10^{-4}\) & 2.00 & \(1.222\!\times10^{-2}\) & 1.26 \\
		90  & \(1.111\!\times10^{-2}\) & 2.78 & \(1.007\!\times10^{-4}\) & 2.00 & \(7.025\!\times10^{-3}\) & 1.36 \\
		120 & \(8.333\!\times10^{-3}\) & 2.08 & \(5.666\!\times10^{-5}\) & 2.00 & \(4.593\!\times10^{-3}\) & 1.48 \\
		\bottomrule
	\end{tabular}
\end{table}

Table~\ref{tab:active_exact} supports the interpretation of
Theorem~\ref{thm:adsc_pde_convergence}: when the ADSC correction is active,
the added edge diffusion produces a leading \(O(h)\)-type contribution, and
the observed rates approach the first-order-to-intermediate range as
\(\Peh\) decreases. The second-order Galerkin rates in this smooth
manufactured test do not contradict the main Gaussian experiments; they
only show that the exact smooth solution itself is well resolved by the
centered stencil, whereas ADSC deliberately injects artificial diffusion in
the active regime.

\subsection{Inactive-regime verification with an exact solution}
\label{subsec:inactive_verification}

Proposition~\ref{prop:adsc_consistency_fixed} states
that the centered second-order regime is recovered when ADSC is inactive.
We verify this separately on a manufactured smooth solution. Let
\[
	u(x,y)=\sin(\pi x)\sin(\pi y),
\]
with \(\eps=1\), \(\bfbeta=(1,0.6)/\sqrt{1+0.6^2}\), and
\(f=-\eps\Delta u+\bfbeta\cdot\nabla u\). For all meshes in
Table~\ref{tab:inactive_second_order}, one has \(\Peh<1\), hence
\(\gamma_0=\gamma_1=0\) and ADSC coincides exactly with the centered
Galerkin discretization. This test verifies that ADSC exactly recovers the
second-order centered accuracy in the diffusion-resolved regime.

\begin{table}[htbp]
	\centering
	\caption{Manufactured-solution verification in the inactive ADSC regime
		(\(\Peh<1\)). Since \(\gamma_0=\gamma_1=0\), ADSC reduces to the centered
		Galerkin scheme and second-order accuracy is recovered.}
	\label{tab:inactive_second_order}
	\begin{tabular}{ccccccc}
		\toprule
		\(N_e\) & \(h\) & \(\Peh\) & \(\norm{e}_{L^2}\) & rate & \(\norm{e}_{L^\infty}\) & rate \\
		\midrule
		20  & \(5.000\!\times10^{-2}\) & 0.0250  & \(1.039\!\times10^{-3}\) & --   & \(2.078\!\times10^{-3}\) & --   \\
		40  & \(2.500\!\times10^{-2}\) & 0.0125  & \(2.594\!\times10^{-4}\) & 2.00 & \(5.207\!\times10^{-4}\) & 2.00 \\
		80  & \(1.250\!\times10^{-2}\) & 0.00625 & \(6.484\!\times10^{-5}\) & 2.00 & \(1.302\!\times10^{-4}\) & 2.00 \\
		160 & \(6.250\!\times10^{-3}\) & 0.003125& \(1.621\!\times10^{-5}\) & 2.00 & \(3.255\!\times10^{-5}\) & 2.00 \\
		\bottomrule
	\end{tabular}
\end{table}

Table~\ref{tab:inactive_second_order} is consistent with the theoretical inactive-regime
statement: once \(\Peh\le1\), the ADSC correction vanishes and the method
recovers the second-order behavior of the centered stencil on a smooth exact
solution. This also helps interpret Table~\ref{tab:mesh}: the lower ADSC rates
there are not caused by a loss of consistency in the inactive regime, but by
the active localized artificial diffusion used to suppress modal dominance and
extrema violations in the convection-dominated regime.

\subsection{Sensitivity with respect to the right-hand side}

\begin{table}[htbp]
	\centering
	\caption{Sensitivity with respect to the right-hand side, $\Peh=5.56$.}
	\label{tab:rhs}
	\begin{tabular}{llccc}
		\toprule
		Right-hand side   & Method        & $\norm{e}_{L^2}$         & $E_{\mathrm{ext}}$ & $\bar\rho_{\mathrm{stab}}$ \\
		\midrule
		Centered Gaussian & Galerkin      & $4.394\!\times\!10^{-3}$ & $3.700\!\times\!10^{-2}$ & 3.878      \\
		                  & Coord.~upwind & $6.813\!\times\!10^{-3}$ & 0                        & 0.811      \\
		                  & SUPG          & $4.037\!\times\!10^{-3}$ & $2.869\!\times\!10^{-2}$ & 1.298      \\
		                  & CIP-type      & $2.856\!\times\!10^{-3}$ & $2.020\!\times\!10^{-2}$ & 2.560      \\
		                  & LPS-type      & $4.129\!\times\!10^{-3}$ & $3.296\!\times\!10^{-2}$ & 2.536      \\
		                  & AFC-inspired      & $5.869\!\times\!10^{-3}$ & $1.080\!\times\!10^{-5}$ & 1.720      \\
		                  & ADSC          & $6.123\!\times\!10^{-3}$ & $6.100\!\times\!10^{-10}$ & 1.123      \\
		\midrule
		Narrow Gaussian   & Galerkin      & $1.660\!\times\!10^{-3}$ & $5.385\!\times\!10^{-3}$ & 3.878      \\
		                  & Coord.~upwind & $3.437\!\times\!10^{-3}$ & 0                        & 0.811      \\
		                  & SUPG          & $1.543\!\times\!10^{-3}$ & $4.870\!\times\!10^{-3}$ & 1.298      \\
		                  & CIP-type      & $1.163\!\times\!10^{-3}$ & $2.451\!\times\!10^{-3}$ & 2.560      \\
		                  & LPS-type      & $1.580\!\times\!10^{-3}$ & $5.536\!\times\!10^{-3}$ & 2.536      \\
		                  & AFC-inspired      & $3.231\!\times\!10^{-3}$ & $6.860\!\times\!10^{-6}$ & 1.818      \\
		                  & ADSC          & $3.272\!\times\!10^{-3}$ & $4.746\!\times\!10^{-12}$ & 1.007      \\
		\midrule
		Double source     & Galerkin      & $8.420\!\times\!10^{-3}$ & $9.854\!\times\!10^{-2}$ & 3.878      \\
		                  & Coord.~upwind & $1.026\!\times\!10^{-2}$ & 0                        & 0.811      \\
		                  & SUPG          & $7.731\!\times\!10^{-3}$ & $8.288\!\times\!10^{-2}$ & 1.298      \\
		                  & CIP-type      & $5.408\!\times\!10^{-3}$ & $7.050\!\times\!10^{-2}$ & 2.560      \\
		                  & LPS-type      & $7.898\!\times\!10^{-3}$ & $9.094\!\times\!10^{-2}$ & 2.536      \\
		                  & AFC-inspired      & $8.143\!\times\!10^{-3}$ & $9.675\!\times\!10^{-6}$ & 1.428      \\
		                  & ADSC          & $8.640\!\times\!10^{-3}$ & $6.538\!\times\!10^{-10}$ & 1.033      \\
		\bottomrule
	\end{tabular}
\end{table}

Table~\ref{tab:rhs} reports that the modal indicators of the linear
operator-based methods remain unchanged when the source changes, whereas
the ADSC and AFC-inspired indicators vary because their selected activations
depend on the computed solution. SUPG and CIP-type corrections improve the extrema
diagnostic relative to Galerkin, but ADSC and AFC-inspired corrections reduce
it to the $10^{-11}$--$10^{-5}$ level; in the centered Gaussian test, ADSC reduces
$E_{\mathrm{ext}}$ from $3.700\times10^{-2}$ to $6.100\times10^{-10}$.
ADSC remains the most effective of the adaptive methods in
reducing \(\bar\rho_{\mathrm{stab}}\) across the three right-hand sides. The table reports
\(L^2\), \(E_{\mathrm{ext}}\), and \(\bar\rho_{\mathrm{stab}}\) because this subsection
focuses on source-dependent modal and extrema behavior; the main
\(L^\infty\) comparison for the centered Gaussian case is given in
Table~\ref{tab:2d}.

\subsection{Sensitivity with respect to the convection direction}

\begin{table}[htbp]
	\centering
	\caption{Sensitivity with respect to the convection direction.}
	\label{tab:direction}
	\begin{tabular}{llcc}
		\toprule
		Direction        & Method        & Det. & $\bar\rho_{\mathrm{stab}}$ \\
		\midrule
		$(1,0)$          & Galerkin      & 160  & 3.230      \\
		                 & Coord.~upwind & 0    & 0.916      \\
		                 & SUPG          & 176  & 1.352      \\
		                 & CIP-type      & 56   & 2.149      \\
		                 & LPS-type      & 176  & 2.343      \\
		                 & AFC-inspired      & 0    & 2.192      \\
		                 & ADSC          & 0    & 1.136      \\
		\midrule
		$(1,1)/\sqrt{2}$ & Galerkin      & 275  & 3.960      \\
		                 & Coord.~upwind & 9    & 0.803      \\
		                 & SUPG          & 256  & 1.289      \\
		                 & CIP-type      & 258  & 2.611      \\
		                 & LPS-type      & 266  & 2.556      \\
		                 & AFC-inspired      & 12   & 1.701      \\
		                 & ADSC          & 9    & 1.126      \\
		\midrule
		$(2,1)/\sqrt{5}$ & Galerkin      & 255  & 3.833      \\
		                 & Coord.~upwind & 9    & 0.819      \\
		                 & SUPG          & 274  & 1.307      \\
		                 & CIP-type      & 161  & 2.532      \\
		                 & LPS-type      & 273  & 2.528      \\
		                 & AFC-inspired      & 10   & 1.709      \\
		                 & ADSC          & 10   & 1.126      \\
		\bottomrule
	\end{tabular}
\end{table}

\begin{figure}[htbp]
	\centering
	\includegraphics[width=0.98\textwidth]{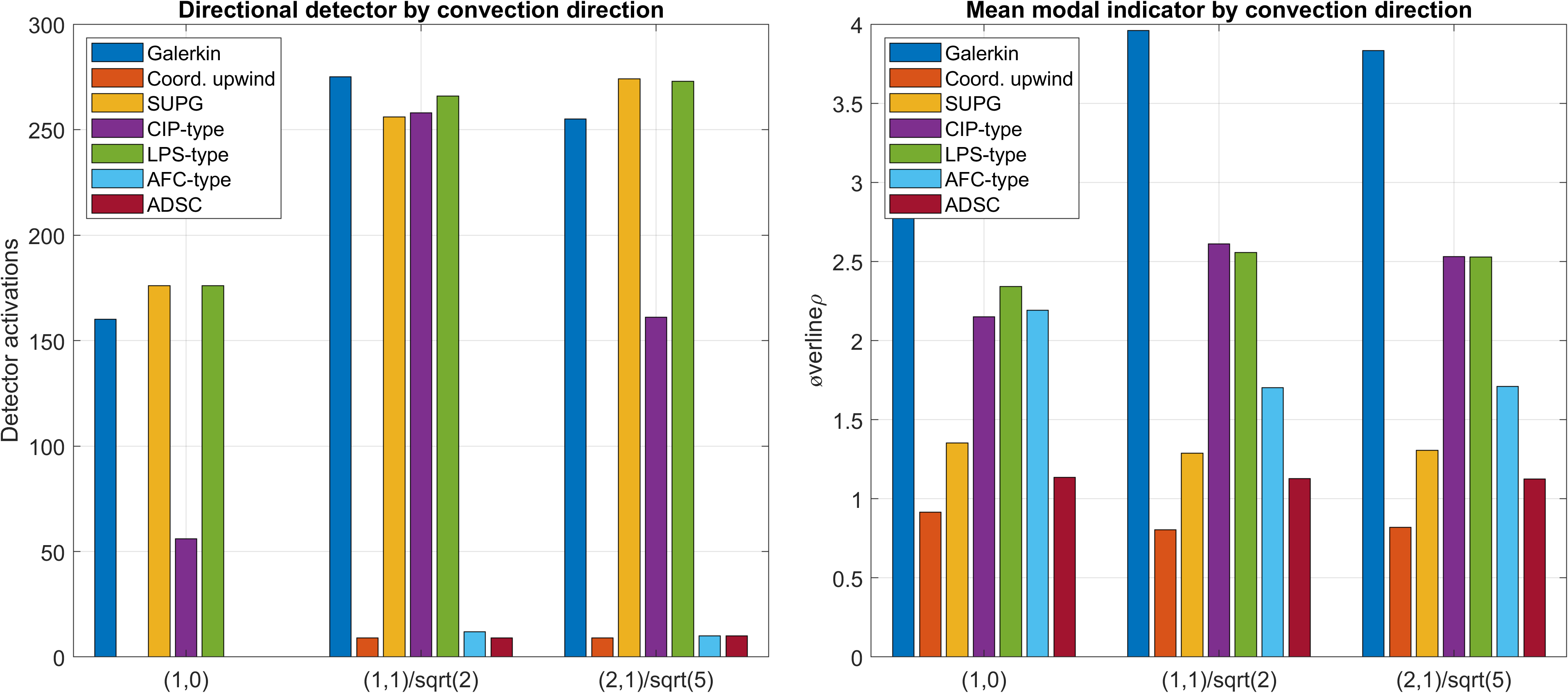}
	\caption{Sensitivity with respect to the convection direction. The
		directional tests show that coordinate upwinding remains the most
		dissipative correction, whereas ADSC provides a selective reduction of
		directional detector activations, especially for oblique directions.
		For $(2,1)/\sqrt5$, the detector count decreases from 255
		(Galerkin) to 10 (ADSC), while \(\bar\rho_{\mathrm{stab}}\) decreases from 3.833 to
		1.126.}
	\label{fig:direction_2d}
\end{figure}

Table~\ref{tab:direction} and Figure~\ref{fig:direction_2d} indicate that the effect of stabilization is
direction dependent. The aligned case \((1,0)\) is closest to the
one-dimensional situation: coordinate upwinding and the monotonicity-driven
corrections eliminate the detector activations. The oblique directions are
the genuinely two-dimensional cases. Coordinate upwinding gives the smallest modal
indicator because it is the most uniformly dissipative correction.
SUPG reduces \(\bar\rho_{\mathrm{stab}}\) but does not necessarily reduce the directional
detector count. CIP-type and LPS-type corrections mainly damp
high-frequency or unresolved components and are less effective on the
present directional detector. ADSC keeps the detector count close to the
upwind and AFC-inspired values while giving a lower \(\bar\rho_{\mathrm{stab}}\) than AFC-inspired
correction in the oblique tests.
These observations are consistent with the directional modal
interpretation developed in the theoretical sections and show how it
translates into the genuinely two-dimensional tests considered here.

\subsection{Additional benchmark: NIST-type layer problem and Shishkin mesh}
\label{subsec:nist_uniform}

We now add a sharp-gradient benchmark in the spirit of the NIST adaptive
elliptic test collection~\citep{Mitchell2010}. The problem is posed on
\(\Omega=(0,1)^2\), with
\[
	\bfbeta=(1/2,\sqrt3/2),\qquad
	u(x,y)=\ell(x)\ell(y),
\]
where
\[
	\ell(t)
	=
	t-\frac{\exp((t-1)/\eps)-\exp(-1/\eps)}
	{1-\exp(-1/\eps)}.
\]
The source term is \(f=-\eps\Delta u+\bfbeta\cdot\nabla u\), and the
boundary condition is homogeneous because \(\ell(0)=\ell(1)=0\). The
solution has exponential outflow layers at \(x=1\) and \(y=1\), with a
corner layer at \((1,1)\). The ADSC parameters are exactly those of the
main test: \(\gamma_{\min}=0.08\), \(\gamma_{\max}=0.25\), \(\kappa=2\),
\(\omega=0.35\), \(\delta_h=10^{-12}\), and
\(\eta_{\rm det}=5\times10^{-2}\). No benchmark-specific retuning is used.
All errors in this subsection are measured directly against this closed
form solution, not against a mesh-dependent numerical reference.

The benchmark tables report Galerkin, coordinate upwinding, SUPG, and ADSC.
These are the methods whose definitions transfer to the nonuniform
Shishkin mesh without introducing extra method-specific calibration. The
CIP-, LPS-, and AFC-inspired edge-diffusion comparators remain part of the main uniform
Cartesian comparison; extending them to a layer-adapted nonuniform mesh
would require additional choices that are separate from the ADSC
validation considered here. The SUPG entries use the same high-P\'eclet
streamline scale as in the main test,
\(\tau=h_{\max}/(2|\bfbeta|)\), together with the residual right-hand side
correction. A one-dimensional exponentially fitted choice would replace
this by
\[
	\tau_{\rm exp}
	=\frac{h}{2|\bfbeta|}
	\left(\coth(\Pe_h)-\frac{1}{\Pe_h}\right),
\]
and a layer-optimized SUPG study would require a separate calibration.
Thus the SUPG rows in Tables~\ref{tab:nist_uniform_2}--\ref{tab:nist_uniform_3}
should be read as the fixed-parameter streamline-diffusion comparator used
throughout this paper, not as an optimized SUPG benchmark for exponential
layers.

\paragraph{Small-diffusion regime.}
The coercivity estimate proved in Section~\ref{sec:adsc} is not uniform
with respect to \(\eps\). Consequently, the tests below with
\(\eps=10^{-3}\), the fixed-grid sweep toward smaller diffusion values,
and the Shishkin experiments should be interpreted as robustness diagnostics
in convection-dominated regimes, not as numerical verification of an
\(\eps\)-uniform stability theorem. In this part of the paper, the main
ADSC quantities of interest are extrema control and reduction of directional
non-monotonicity; the reported errors and inter-level slopes are
pre-asymptotic diagnostics on under-resolved layers.

\begin{table}[htbp]
	\centering
	\scriptsize
	\caption{Uniform NIST-inspired exponential boundary-layer benchmark, \(\eps=10^{-2}\).}
	\label{tab:nist_uniform_2}
	\begin{tabular}{ccclcccc}
		\toprule
		\(N_e\) & \(h_{\max}\) & \(\Pe_{\max}\) & Method &
		\(\norm{e}_{L^2}\) & \(\norm{e}_{L^\infty}\) & \(E_{\mathrm{ext}}\) & Det. \\
		\midrule
		30 & \(3.333\!\times10^{-2}\) & 1.67 & Galerkin & \(2.514\!\times10^{-2}\) & \(2.699\!\times10^{-1}\) & \(2.679\!\times10^{-1}\) & 56 \\
		30 & \(3.333\!\times10^{-2}\) & 1.67 & Upwind   & \(3.578\!\times10^{-2}\) & \(3.580\!\times10^{-1}\) & 0 & 53 \\
		30 & \(3.333\!\times10^{-2}\) & 1.67 & SUPG     & \(1.891\!\times10^{-2}\) & \(1.763\!\times10^{-1}\) & \(1.743\!\times10^{-1}\) & 100 \\
		30 & \(3.333\!\times10^{-2}\) & 1.67 & ADSC     & \(9.686\!\times10^{-3}\) & \(8.557\!\times10^{-2}\) & 0 & 54 \\
		45 & \(2.222\!\times10^{-2}\) & 1.11 & Galerkin & \(1.091\!\times10^{-2}\) & \(1.240\!\times10^{-1}\) & \(5.382\!\times10^{-2}\) & 83 \\
		45 & \(2.222\!\times10^{-2}\) & 1.11 & Upwind   & \(2.912\!\times10^{-2}\) & \(3.020\!\times10^{-1}\) & 0 & 77 \\
		45 & \(2.222\!\times10^{-2}\) & 1.11 & SUPG     & \(1.214\!\times10^{-2}\) & \(1.468\!\times10^{-1}\) & 0 & 153 \\
		45 & \(2.222\!\times10^{-2}\) & 1.11 & ADSC     & \(4.419\!\times10^{-3}\) & \(3.336\!\times10^{-2}\) & 0 & 81 \\
		60 & \(1.667\!\times10^{-2}\) & 0.83 & Galerkin & \(5.911\!\times10^{-3}\) & \(7.254\!\times10^{-2}\) & \(1.454\!\times10^{-2}\) & 110 \\
		60 & \(1.667\!\times10^{-2}\) & 0.83 & Upwind   & \(2.372\!\times10^{-2}\) & \(2.358\!\times10^{-1}\) & 0 & 107 \\
		60 & \(1.667\!\times10^{-2}\) & 0.83 & SUPG     & \(1.134\!\times10^{-2}\) & \(1.382\!\times10^{-1}\) & 0 & 106 \\
		60 & \(1.667\!\times10^{-2}\) & 0.83 & ADSC     & \(5.911\!\times10^{-3}\) & \(7.254\!\times10^{-2}\) & \(1.454\!\times10^{-2}\) & 110 \\
		90 & \(1.111\!\times10^{-2}\) & 0.56 & Galerkin & \(2.492\!\times10^{-3}\) & \(3.128\!\times10^{-2}\) & \(5.084\!\times10^{-3}\) & 167 \\
		90 & \(1.111\!\times10^{-2}\) & 0.56 & Upwind   & \(1.698\!\times10^{-2}\) & \(1.719\!\times10^{-1}\) & 0 & 162 \\
		90 & \(1.111\!\times10^{-2}\) & 0.56 & SUPG     & \(1.022\!\times10^{-2}\) & \(1.113\!\times10^{-1}\) & 0 & 163 \\
		90 & \(1.111\!\times10^{-2}\) & 0.56 & ADSC     & \(2.492\!\times10^{-3}\) & \(3.128\!\times10^{-2}\) & \(5.084\!\times10^{-3}\) & 167 \\
		120 & \(8.333\!\times10^{-3}\) & 0.42 & Galerkin & \(1.363\!\times10^{-3}\) & \(1.623\!\times10^{-2}\) & \(2.991\!\times10^{-3}\) & 222 \\
		120 & \(8.333\!\times10^{-3}\) & 0.42 & Upwind   & \(1.316\!\times10^{-2}\) & \(1.389\!\times10^{-1}\) & 0 & 218 \\
		120 & \(8.333\!\times10^{-3}\) & 0.42 & SUPG     & \(8.996\!\times10^{-3}\) & \(9.226\!\times10^{-2}\) & 0 & 217 \\
		120 & \(8.333\!\times10^{-3}\) & 0.42 & ADSC     & \(1.363\!\times10^{-3}\) & \(1.623\!\times10^{-2}\) & \(2.991\!\times10^{-3}\) & 222 \\
		\bottomrule
	\end{tabular}
\end{table}

\begin{table}[htbp]
	\centering
	\scriptsize
	\caption{Uniform NIST-inspired exponential boundary-layer benchmark, \(\eps=10^{-3}\).}
	\label{tab:nist_uniform_3}
	\begin{tabular}{ccclcccc}
		\toprule
		\(N_e\) & \(h_{\max}\) & \(\Pe_{\max}\) & Method &
		\(\norm{e}_{L^2}\) & \(\norm{e}_{L^\infty}\) & \(E_{\mathrm{ext}}\) & Det. \\
		\midrule
		30 & \(3.333\!\times10^{-2}\) & 16.67 & Galerkin & \(2.142\!\times10^{-1}\) & \(2.200\) & \(2.200\) & 459 \\
		30 & \(3.333\!\times10^{-2}\) & 16.67 & Upwind   & \(6.566\!\times10^{-3}\) & \(8.136\!\times10^{-2}\) & 0 & 56 \\
		30 & \(3.333\!\times10^{-2}\) & 16.67 & SUPG     & \(2.064\!\times10^{-1}\) & \(2.046\) & \(2.046\) & 460 \\
		30 & \(3.333\!\times10^{-2}\) & 16.67 & ADSC     & \(2.215\!\times10^{-2}\) & \(2.668\!\times10^{-1}\) & 0 & 53 \\
		45 & \(2.222\!\times10^{-2}\) & 11.11 & Galerkin & \(1.405\!\times10^{-1}\) & \(1.987\) & \(1.987\) & 707 \\
		45 & \(2.222\!\times10^{-2}\) & 11.11 & Upwind   & \(8.007\!\times10^{-3}\) & \(1.213\!\times10^{-1}\) & 0 & 82 \\
		45 & \(2.222\!\times10^{-2}\) & 11.11 & SUPG     & \(1.337\!\times10^{-1}\) & \(1.803\) & \(1.803\) & 728 \\
		45 & \(2.222\!\times10^{-2}\) & 11.11 & ADSC     & \(1.728\!\times10^{-2}\) & \(2.533\!\times10^{-1}\) & 0 & 81 \\
		60 & \(1.667\!\times10^{-2}\) & 8.33 & Galerkin & \(1.015\!\times10^{-1}\) & \(1.777\) & \(1.777\) & 892 \\
		60 & \(1.667\!\times10^{-2}\) & 8.33 & Upwind   & \(9.127\!\times10^{-3}\) & \(1.586\!\times10^{-1}\) & 0 & 112 \\
		60 & \(1.667\!\times10^{-2}\) & 8.33 & SUPG     & \(9.530\!\times10^{-2}\) & \(1.578\) & \(1.578\) & 943 \\
		60 & \(1.667\!\times10^{-2}\) & 8.33 & ADSC     & \(1.439\!\times10^{-2}\) & \(2.481\!\times10^{-1}\) & 0 & 111 \\
		90 & \(1.111\!\times10^{-2}\) & 5.56 & Galerkin & \(6.132\!\times10^{-2}\) & \(1.406\) & \(1.406\) & 1118 \\
		90 & \(1.111\!\times10^{-2}\) & 5.56 & Upwind   & \(1.081\!\times10^{-2}\) & \(2.261\!\times10^{-1}\) & 0 & 171 \\
		90 & \(1.111\!\times10^{-2}\) & 5.56 & SUPG     & \(5.590\!\times10^{-2}\) & \(1.205\) & \(1.205\) & 1201 \\
		90 & \(1.111\!\times10^{-2}\) & 5.56 & ADSC     & \(1.090\!\times10^{-2}\) & \(2.308\!\times10^{-1}\) & 0 & 172 \\
		120 & \(8.333\!\times10^{-3}\) & 4.17 & Galerkin & \(4.103\!\times10^{-2}\) & \(1.104\) & \(1.104\) & 1174 \\
		120 & \(8.333\!\times10^{-3}\) & 4.17 & Upwind   & \(1.198\!\times10^{-2}\) & \(2.838\!\times10^{-1}\) & 0 & 229 \\
		120 & \(8.333\!\times10^{-3}\) & 4.17 & SUPG     & \(3.621\!\times10^{-2}\) & \(9.157\!\times10^{-1}\) & \(9.157\!\times10^{-1}\) & 1380 \\
		120 & \(8.333\!\times10^{-3}\) & 4.17 & ADSC     & \(8.948\!\times10^{-3}\) & \(2.151\!\times10^{-1}\) & 0 & 232 \\
		\bottomrule
	\end{tabular}
\end{table}

The large SUPG errors in Table~\ref{tab:nist_shishkin_2} should be read in
light of the deliberately fixed parameter choice above. They reflect the use
of a high-P\'eclet streamline scale on a layer-adapted mesh, not an intrinsic
failure of optimized SUPG methods for Shishkin or exponential-layer problems.

\begin{table}[htbp]
	\centering
	\scriptsize
	\caption{Pre-asymptotic \(L^2\) inter-level slopes (not convergence rates)
	on the uniform NIST-type benchmark, \(\eps=10^{-3}\), using the mesh levels
	selected from Table~\ref{tab:nist_uniform_3}.}
	\label{tab:nist_uniform_3_rates}
	\begin{tabular}{ccccc}
		\toprule
		Mesh transition & Galerkin & Upwind & SUPG & ADSC \\
		\midrule
		30--60  & 1.08 & -0.48 & 1.12 & 0.62 \\
		60--90  & 1.24 & -0.42 & 1.31 & 0.68 \\
		90--120 & 1.39 & -0.36 & 1.51 & 0.69 \\
		\bottomrule
	\end{tabular}
	\par\smallskip
	\footnotesize\emph{Note.} These slopes are inter-level pre-asymptotic
	diagnostics in a strongly under-resolved regime. They are not asymptotic
	convergence rates; negative values for upwinding indicate nonmonotone
	coarse-mesh error variation rather than divergence of the method.

\end{table}

Tables~\ref{tab:nist_uniform_2}--\ref{tab:nist_uniform_3} show that the
unretuned ADSC parameters remain effective on exponential boundary layers.
For \(\eps=10^{-2}\), ADSC gives the smallest errors on the
convection-dominated uniform meshes and then coincides with Galerkin once
\(\Pe_{\max}<1\). For \(\eps=10^{-3}\), the uniform grid is strongly
under-resolved; Galerkin and SUPG exhibit large extrema violations, whereas
ADSC removes the extrema violation and remains competitive with upwinding.
Table~\ref{tab:nist_uniform_3_rates} also shows the limitation of this
uniform-grid result: ADSC has only sublinear observed \(L^2\) rates
(about \(0.6\)--\(0.7\)) in the tested range. This should not be presented
as an asymptotic convergence regime. It indicates that, for
\(\eps=10^{-3}\), the uniform grids are primarily testing stabilization of
a severely under-resolved exponential layer. The negative upwind slopes in
Table~\ref{tab:nist_uniform_3_rates} are not caused by a changing
reference solution; the reference is the exact formula above. They are
pre-asymptotic inter-level slopes: the upwind artificial diffusion
\(|\bfbeta|h/2\) decreases as the mesh is refined, but the exact layer of
width \(O(\eps)\) remains unresolved over this range. The smeared upwind
profile can therefore move farther from the exact thin layer before the
asymptotic regime is reached. This also explains why ADSC is worse than
upwinding at \(N_e=60\) but slightly better at \(N_e=120\): ADSC decreases
monotonically in the displayed range, whereas the upwind \(L^2\) error is
still pre-asymptotic and nonmonotone. The table should therefore be read
as a robustness diagnostic for under-resolution, not as a convergence
study for upwind or ADSC.

\paragraph{Fixed-grid small-diffusion sweep.}
As a compact robustness check, we repeated the uniform NIST-type experiment
on the fixed grid \(N_e=64\) for
\(\eps\in\{10^{-1},10^{-2},10^{-3},10^{-4},10^{-5}\}\). For
\(\eps=10^{-1}\) and \(10^{-2}\), ADSC is inactive because
\(\Pe_{\max}<1\) and therefore coincides with the centered scheme. For
\(\eps=10^{-3},10^{-4},10^{-5}\), Galerkin and SUPG exhibit large extrema
violations, whereas ADSC gives \(E_{\mathrm{ext}}=0\) in the tested cases.
Coordinate upwinding remains more accurate in \(L^2\) on the most
under-resolved layers. The sweep therefore supports the same conclusion as
Tables~\ref{tab:nist_uniform_2}--\ref{tab:nist_uniform_3}: ADSC acts as a
selective directional damping and extrema-control mechanism, not as an
\(\eps\)-uniformly error-optimal replacement for upwinding.

\subsection{NIST-type layer problem on a Shishkin mesh}
\label{subsec:nist_shishkin}

To address the restriction to uniform grids without leaving the Cartesian
setting, we also test a tensor-product Shishkin mesh. For a layer at
\(x=1\), the transition point is
\[
	\tau=\min\{1/2,\,2\eps\log N_e\},
\]
with half of the intervals placed on \([0,1-\tau]\) and half on
\([1-\tau,1]\), and the same construction in the \(y\)-direction. The ADSC
weights are evaluated with a local edge P\'eclet number
\(\Pe_e=|\bfbeta|h_e/(2\eps)\), using the same saturating law for
\(\gamma_0\).

Before reading Table~\ref{tab:nist_shishkin_2}, one should keep in mind
that the SUPG rows are not produced with a Shishkin- or
exponential-layer-optimized stabilization parameter. They use the same
fixed high-P\'eclet streamline scale as in the preceding uniform-grid
benchmark, with the residual right-hand-side correction. On a
layer-adapted mesh this choice may over-stabilize the coarse subregion and
interact unfavorably with the already resolved fine layer. The table is
therefore a robustness comparison under a common fixed-parameter setup,
not a claim that this is an optimized SUPG implementation for Shishkin
meshes or exponential boundary layers.

\begin{table}[htbp]
	\centering
	\scriptsize
	\caption{Shishkin-mesh NIST-inspired exponential boundary-layer benchmark, \(\eps=10^{-2}\).}
	\label{tab:nist_shishkin_2}
	\begin{tabular}{ccclcccc}
		\toprule
		\(N_e\) & \(h_{\max}\) & \(\Pe_{\max}\) & Method &
		\(\norm{e}_{L^2}\) & \(\norm{e}_{L^\infty}\) & \(E_{\mathrm{ext}}\) & Det. \\
		\midrule
		30 & \(6.213\!\times10^{-2}\) & 3.11 & Galerkin & \(2.895\!\times10^{-3}\) & \(2.254\!\times10^{-2}\) & \(2.061\!\times10^{-2}\) & 41 \\
		30 & \(6.213\!\times10^{-2}\) & 3.11 & Upwind   & \(7.765\!\times10^{-3}\) & \(8.256\!\times10^{-2}\) & 0 & 39 \\
		30 & \(6.213\!\times10^{-2}\) & 3.11 & SUPG     & \(2.108\!\times10^{-1}\) & \(1.141\) & \(6.186\!\times10^{-1}\) & 219 \\
		30 & \(6.213\!\times10^{-2}\) & 3.11 & ADSC     & \(8.061\!\times10^{-4}\) & \(5.635\!\times10^{-3}\) & 0 & 43 \\
		60 & \(3.060\!\times10^{-2}\) & 1.53 & Galerkin & \(2.531\!\times10^{-4}\) & \(2.516\!\times10^{-3}\) & \(1.435\!\times10^{-3}\) & 86 \\
		60 & \(3.060\!\times10^{-2}\) & 1.53 & Upwind   & \(4.713\!\times10^{-3}\) & \(5.192\!\times10^{-2}\) & 0 & 86 \\
		60 & \(3.060\!\times10^{-2}\) & 1.53 & SUPG     & \(9.910\!\times10^{-2}\) & \(8.770\!\times10^{-1}\) & \(1.421\!\times10^{-1}\) & 382 \\
		60 & \(3.060\!\times10^{-2}\) & 1.53 & ADSC     & \(5.903\!\times10^{-4}\) & \(4.942\!\times10^{-3}\) & 0 & 86 \\
		90 & \(2.022\!\times10^{-2}\) & 1.01 & Galerkin & \(9.302\!\times10^{-5}\) & \(1.046\!\times10^{-3}\) & \(3.675\!\times10^{-4}\) & 137 \\
		90 & \(2.022\!\times10^{-2}\) & 1.01 & Upwind   & \(3.468\!\times10^{-3}\) & \(3.869\!\times10^{-2}\) & 0 & 132 \\
		90 & \(2.022\!\times10^{-2}\) & 1.01 & SUPG     & \(5.611\!\times10^{-2}\) & \(6.552\!\times10^{-1}\) & \(5.525\!\times10^{-2}\) & 291 \\
		90 & \(2.022\!\times10^{-2}\) & 1.01 & ADSC     & \(2.693\!\times10^{-4}\) & \(2.222\!\times10^{-3}\) & 0 & 137 \\
		120 & \(1.507\!\times10^{-2}\) & 0.75 & Galerkin & \(5.276\!\times10^{-5}\) & \(6.151\!\times10^{-4}\) & \(1.581\!\times10^{-4}\) & 189 \\
		120 & \(1.507\!\times10^{-2}\) & 0.75 & Upwind   & \(2.777\!\times10^{-3}\) & \(3.115\!\times10^{-2}\) & 0 & 183 \\
		120 & \(1.507\!\times10^{-2}\) & 0.75 & SUPG     & \(3.693\!\times10^{-2}\) & \(5.197\!\times10^{-1}\) & \(1.805\!\times10^{-2}\) & 274 \\
		120 & \(1.507\!\times10^{-2}\) & 0.75 & ADSC     & \(5.276\!\times10^{-5}\) & \(6.151\!\times10^{-4}\) & \(1.581\!\times10^{-4}\) & 189 \\
		\bottomrule
	\end{tabular}
\end{table}

\begin{table}[htbp]
	\centering
	\small
	\caption{Local P\'eclet numbers on the Shishkin meshes used in Table~\ref{tab:nist_shishkin_2}.}
	\label{tab:shishkin_pe}
	\begin{tabular}{ccccccc}
		\toprule
		\(\eps\) & \(N_e\) & \(\tau\) & \(h_c\) & \(h_f\) & \(\Pe_c\) & \(\Pe_f\) \\
		\midrule
		\(10^{-2}\) & 30  & \(6.802\!\times10^{-2}\) & \(6.213\!\times10^{-2}\) & \(4.535\!\times10^{-3}\) & 3.107 & 0.227 \\
		\(10^{-2}\) & 60  & \(8.189\!\times10^{-2}\) & \(3.060\!\times10^{-2}\) & \(2.730\!\times10^{-3}\) & 1.530 & 0.136 \\
		\(10^{-2}\) & 90  & \(9.000\!\times10^{-2}\) & \(2.022\!\times10^{-2}\) & \(2.000\!\times10^{-3}\) & 1.011 & 0.100 \\
		\(10^{-2}\) & 120 & \(9.575\!\times10^{-2}\) & \(1.507\!\times10^{-2}\) & \(1.596\!\times10^{-3}\) & 0.754 & 0.080 \\
		\bottomrule
	\end{tabular}
\end{table}

Table~\ref{tab:nist_shishkin_2} indicates that the layer-adapted mesh
improves the resolution of the exponential layer. Table~\ref{tab:shishkin_pe}
also illustrates the intended local behavior: ADSC can remain active in the
coarse subregion where \(\Pe_c>1\), while it is inactive in the refined
layer region where \(\Pe_f<1\). Thus the method does not force artificial
diffusion in the fine layer region where the Shishkin mesh already resolves
the sharp gradient. At the same time, the table exposes a limitation of the
current local-P\'eclet activation on layer-adapted meshes. For \(N_e=60\) and
\(N_e=90\), Galerkin already has small \(L^2\) and \(L^\infty\) errors on
the Shishkin grid, while ADSC removes the remaining extrema violation at
the price of extra diffusion and therefore larger norm errors. This is
consistent with the local coarse-region values \(\Pe_c>1\): the correction
can still be activated in the coarse part of the Shishkin mesh even though
the fine region has already controlled the layer error. Consequently, the
Shishkin experiment should not be read as an accuracy advantage of ADSC
over Galerkin on layer-adapted meshes. It shows a tradeoff: extrema control
can be obtained without retuning, but the current local-P\'eclet switch may
be too diffusive when an adapted mesh has already suppressed the dominant
oscillatory error.

\paragraph{Frozen-coefficient validity on Shishkin meshes.}
On a Shishkin mesh, the local mesh step varies between \(h_c\) and \(h_f\),
so the global translation-invariance used by the interior LFA symbol is
lost. The modal-balance parameter \(\gamma_0\) is therefore evaluated
locally through \(\Pe_e=|\bfbeta|h_e/(2\eps)\), and the LFA argument is used
only as a frozen-coefficient sizing rule. The nonsingularity and coercivity
arguments for ADSC remain valid for any Cartesian edge operator
\(D_x^TW_xD_x+D_y^TW_yD_y\) with nonnegative weights; what is lost is the
exact diagonalization by tensor-product Fourier or sine modes. Thus the
Shishkin test supports the robustness assessment of the sparse directional mechanism
on a structured nonuniform mesh, but it is not a proof for unstructured
simplicial finite elements.

\subsection{Summary of numerical conclusions}

The numerical evidence can be summarized in four points. First, in the main
two-dimensional Cartesian test, ADSC reduces extrema violations and the mean
modal indicator relative to the centered Galerkin scheme, while retaining a
less uniformly smoothed profile than coordinate upwinding. Second, the
fixed-reference activation test illustrates the fixed-activation framework of
the conditional theorem, while the parameter, initialization, and few-shot
studies show that the selected parameters are conservative rather than
error-optimal and that a small number of activation updates already recovers
the main extrema-control and modal-damping effects. Third, mesh-refinement
and manufactured-solution tests show observed ADSC error decrease in the
active regime and recover the centered second-order behavior when ADSC is
inactive. Fourth, the NIST-type and Shishkin experiments provide robustness diagnostics for
the local directional mechanism on sharp-gradient configurations, while
confirming the limitations stated at the beginning of
Section~\ref{subsec:nist_uniform}.


\section{Conclusion}


This paper has developed a modal-rectification inspired framework for local
directional edge diffusion on uniform Cartesian constant-coefficient
convection--diffusion discretizations with homogeneous Dirichlet conditions.
The starting point is the local Fourier symbol of the centered stencil, from
which a mode-resolved convection-dominance indicator and an anisotropic modal
footprint are obtained. The constructed rectified modal reference operator is
then shown to be nonlocal in the nodal basis, which explains why exact modal
rectification cannot be implemented by a fixed-width Cartesian stencil.

ADSC is the corresponding local surrogate studied here: a nearest-neighbor,
\(\bfbeta\)-directional, symmetric positive semidefinite edge-diffusion
correction with regularized activation. For the fixed-activation regularized
operator selected from an auxiliary smooth activation sequence, the paper
proves consistency, fixed-\(\eps\) energy stability, and conditional
discrete \(H^1\)-seminorm convergence. For the regularized fully coupled formulation, in
which the activation is generated by the unknown stabilized solution itself,
Propositions~\ref{prop:adsc_coupled_existence}--\ref{prop:adsc_coupled_qualitative_convergence}
establish existence for each fixed mesh and qualitative compactness/convergence
in \(L^2\) toward the weak solution of the continuous problem. What remains open is the
uniqueness of the nonlinear coupled discrete solution for fixed \(h\), an
energy-norm convergence rate for the fully coupled problem, and convergence
of the activation-update algorithm toward such a coupled solution. Among
these issues, the most immediate analytical step is an energy-norm estimate
for the coupled regularized problem under verifiable activation-stability
assumptions; the algorithmic convergence question is logically separate
because it concerns the practical fixed-point iteration rather than the
existence of a coupled discrete solution.

The computations support the intended role of ADSC as a selective
extrema-control and modal-damping mechanism, rather than as a uniformly
error-optimal replacement for established stabilizations. The fixed-reference
activation test provides a numerical illustration of the fixed-activation
operator used in the conditional analysis, while the few-shot study indicates
that most of the practical effect is already obtained after a small number of
activation updates. A fixed-grid sweep in \(\eps\) further indicates that ADSC
can suppress extrema violations in severely convection-dominated tests, even
though it is not an \(\eps\)-uniformly error-optimal substitute for upwinding.
The Shishkin experiments also show a practical limitation: when a
layer-adapted mesh has already controlled the dominant layer error, the
current local-P\'eclet switch can be too diffusive and may increase norm
errors while removing the remaining extrema violation.
Extensions to triangular finite elements,
variable convection fields, calibrated AFC comparisons, oblique internal
layers, and \(\eps\)-uniform analysis are natural continuations. Recent AFC
and LPS developments, including high-order Bernstein finite elements,
convex-limiting sensors, and \( \boldsymbol H(\mathrm{curl})\) /
\(\boldsymbol H(\mathrm{div})\) LPS formulations
\citep{Hajduk2025,KuzminHajdukVedral2025,LuoWangWu2025}, illustrate that
these extensions require additional discretization-specific analysis rather
than a direct transplantation of the present Cartesian symbol framework.
Overall, the contribution is a modal design route for local directional
edge diffusion on Cartesian grids, together with a conditional
fixed-activation analysis and numerical evidence of selective extrema
control.

\section*{Notation summary}
\begin{center}
\small
\begin{tabular}{L{0.24\textwidth}L{0.68\textwidth}}
\toprule
Symbol & Meaning \\
\midrule
\(\eps,\bfbeta,h,N_e,N\) & Diffusion coefficient, constant convection vector, mesh size,
number of intervals per direction, and number of interior nodes per direction. \\
\(\beta_r^+,\beta_r^-\) & Positive and negative parts of the \(r\)-th convection component,
\(\beta_r^+=\max(\beta_r,0)\) and \(\beta_r^-=\min(\beta_r,0)\). \\
\(\theta_p,\theta_q\) & Discrete modal frequencies associated with the interior sine modes. \\
\(\lhpq=a_{pq}+\mathrm{i} b_{pq}\) & Interior-stencil local Fourier symbol, split into diffusive and
convective contributions. \\
\(K_h^{\rm int}\) & Translation-invariant interior stencil operator used in local Fourier analysis, distinct from the finite Dirichlet matrix \(K_h\). \\
\(\rho_{pq}=|b_{pq}|/a_{pq}\) & Modal convection-dominance indicator. \\
\(\varphi_{pq}=\arctan\rho_{pq}\) & Modal angle used in the isotropic rectification diagnostic. \\
\(\Icd\) & Galerkin convectively dominant modal set, defined by \(\rho_{pq}>1\). \\
\(\bar\rho_{\mathrm{Gal}}\) & Mean Galerkin modal indicator over \(\Icd\). \\
\(\bar\rho_{\mathrm{stab}}\) & Mean stabilized modal indicator computed over the same Galerkin-dominant set \(\Icd\). \\
\(\phi_{pq}\) & Tensor-product sine mode satisfying the homogeneous Dirichlet boundary condition. \\
\(\psi_{\theta}\) & Interior local Fourier mode used only for the translation-invariant LFA symbol. \\
\(\Lstar\) & Ideal rectified modal reference operator with eigenvalues \(|\lhpq|\). \\
\(D_x,D_y,W_x,W_y\) & Edge-difference matrices and nonnegative ADSC edge weights. \\
\(\|\cdot\|_{0,h}\) & Discrete \(L^2\)-norm on the Cartesian grid, \(\|V\|_{0,h}^2=h^2\sum_{i,j}|V_{ij}|^2\). \\
\(|\cdot|_{1,h}\) & Discrete \(H^1\)-seminorm induced by nearest-neighbor differences, \(|V|_{1,h}^2=\|D_xV\|_{0,h}^2+\|D_yV\|_{0,h}^2\). \\
\(\|e\|_{L^2},\|e\|_{L^\infty}\) & Numerical error norms computed against the fine-grid reference solution interpolated on the tested grid. \\
\(E_{\mathrm{ext}}\) & Total violation of the reference extrema, equal to the sum of the undershoot and overshoot. \\
Det. & Number of grid nodes detected as directionally non-monotone by the post-processing detector. \\
\(TV\) & Discrete total variation diagnostic used only in the numerical comparison. \\
\(U_{\mathrm{ref}},U_{\mathrm{conv}}\) & Fine-grid reference solution and reference coupled ADSC iterate, respectively. \\
\(\chi,\chi^x,\chi^y\) & Nodal and edge activation indicators selected by the directional detector. \\
\(\gamma_0,\gamma_1,\eta_{\Pe}\) & Baseline ADSC scale, active reinforcement scale, and Pe-dependent activation factor. \\
\(\bar B\) & Averaged modal-balance factor used in the conditional scaling argument. \\
\(\gamma_0^{\mathrm{bal}}\) & Modal-balance target before projection or conservative saturation. \\
\bottomrule
\end{tabular}
\end{center}

\paragraph{Declaration of competing interest.}
Declarations of interest: none.

\paragraph{Funding.}
This research received no external funding.

\paragraph{Code and data availability.}
The MATLAB codes used to produce the results in this paper are openly available
on Zenodo at \url{https://doi.org/10.5281/zenodo.20342147}~\citep{BomissoKoumaZenodo2026}. No additional datasets were generated or analyzed during the current study. The repository contains a README file and an audit file describing the correspondence between scripts, figures, and tables. The main entry points are:
\begin{center}
\scriptsize
\begin{tabular}{L{0.43\linewidth}L{0.49\linewidth}}
\toprule
Script & Purpose \\
\midrule
\path{ADSC_Main_2D_Benchmark.m} & Main Cartesian benchmark and principal tables. \\
\path{ADSC_CommonScale_Figures.m} & Common-scale solution and error-map figures. \\
\path{ADSC_FewShot_MeshRefinement.m} & Few-shot ADSC mesh-refinement diagnostics. \\
\path{ADSC_Manufactured_ActiveRegime_Test.m} & Manufactured active-regime consistency test. \\
\path{ADSC_NIST_Shishkin_Benchmark.m} & NIST-type uniform and Shishkin benchmarks. \\
\path{ADSC_EpsilonSweep_NIST.m} & Fixed-grid epsilon-sweep robustness diagnostics. \\
\bottomrule
\end{tabular}
\end{center}
These scripts generate the numerical tables and figures reported in
Section~\ref{sec:numerics}, including the main Cartesian benchmark, the
fixed-reference and few-shot ADSC tests, the manufactured active-regime test,
and the NIST-type uniform and Shishkin boundary-layer benchmarks. The repository also contains a manifest and an MIT license.

\end{document}